\DeclareSymbolFont{largesymbol}{OMX}{yhex}{m}{n}
\DeclareMathAccent{\Widehat}{\mathord}{largesymbol}{100}
\newtheorem{theorem}{Theorem}[section]
\newtheorem{lemma}[theorem]{Lemma}
\theoremstyle{definition}
\newtheorem{example}[theorem]{Example}
\theoremstyle{remark}
\newtheorem{remark}[theorem]{Remark}
\newcommand{\fe}{\mathrm{e}}
\newcommand{\bR}{{\mathbb R}}
\newcommand{\bC}{{\mathbb C}}
\newcommand{\bN}{{\mathbb N}}
\newcommand{\bx}{\mathbf{x}}
\numberwithin{equation}{section}
\begin{document}

% \title[Rotating NLS]{Computing the action ground state for the rotating nonlinear Schr\"odinger equation}
\title[Rotating nonlinear Schr\"odinger equation]{Computing the action ground state for the rotating nonlinear Schr\"odinger equation}

\author[W. Liu]{Wei Liu}
\address{\hspace*{-12pt}W.~Liu: South China Research Center for Applied Mathematics and Interdisciplinary Studies, South China Normal University, Guangzhou 510631, China}
\email{wliu@m.scnu.edu.cn}

\author[Y. Yuan]{Yongjun Yuan}
\address{\hspace*{-12pt}Y.~Yuan: MOE-LCSM, School of Mathematics and Statistics, Hunan Normal University, Changsha, Hunan 410081, China}
\email{yyj1983@hunnu.edu.cn}

\author[X. Zhao]{Xiaofei Zhao}
\address{\hspace*{-12pt}X.~Zhao: School of Mathematics and Statistics \& Computational Sciences Hubei Key Laboratory, Wuhan University, 430072 Wuhan, China}
\email{matzhxf@whu.edu.cn}

%\subjclass[2010]{Primary }
%
%\keywords{Vlasov-Poisson equation, Three dimensions, Strong magnetic field, Varying direction, Uniformly accurate method, Particle-In-Cell}

\date{}

\dedicatory{}

\begin{abstract}
We consider the computations of the action ground state for a rotating nonlinear Schr\"odinger equation. It reads as a minimization of the action functional under the Nehari constraint. In the focusing case, we identify an
equivalent formulation of the problem which simplifies the constraint. Based on it, we propose a normalized gradient flow method with asymptotic Lagrange multiplier and establish  the energy-decaying property. Popular optimization methods are also applied to gain more efficiency. In the defocusing case, we prove that the ground state can be obtained by the unconstrained minimization. Then the direct gradient flow method and unconstrained optimization methods are applied. Numerical experiments show the convergence and accuracy of the proposed methods in both cases, and comparisons on the efficiency are discussed. Finally, the relation between the action and the energy ground states are numerically investigated.
 \\ \\
{\bf Keywords:} rotating nonlinear Schr\"odinger equation, action ground state, gradient flow, energy decay, optimization method, quantized vortices  \\ \\
{\bf AMS Subject Classification:} 35B38, 35Q55, 65N12, 81-08
%%%% 35Q55, 65T40, 65N12, 65N35
\end{abstract}
% 35B38  Critical points of functionals in context of PDEs (e.g., energy functionals)
% 35Q55  NLS equations (nonlinear Schrödinger equations)
% 65N12  Stability and convergence of numerical methods for boundary value problems involving PDEs
% 81-08  Computational methods for problems pertaining to quantum theory

\maketitle

\section{Introduction}
The nontrivial solution of the elliptic equation of type $-\Delta \phi=g(\phi)$ with $g(0)=0$ has been studied for a long time \cite{Lions1,Lions2,Strauss}. It arises from many different physical contexts, such as the steady state of a diffusion model or the standing wave of a dispersive model. In this work, we are concerned with the following semi-linear elliptic problem in $d$ space dimensions for $d\in\bN_+$ as:
\begin{align}\label{model0}
&-\frac{1}{2}\Delta\phi(\bx)+V(\bx)\phi(\bx)+\beta |\phi(\bx)|^{p-1}\phi(\bx)-
\Omega L_z\phi(\bx)+\omega\phi(\bx)=0,\quad \phi(\bx)\not\equiv0,\quad \bx\in \bR^d,
\end{align}
where $\Omega,\,\beta,\,\omega\in\bR$ and $p>1$ are given parameters, $\bx=(x_1,\ldots,x_d)^\top$ and  $\phi(\bx):\bR^d\to\bC$ is the unknown. Moreover, $V(\bx)$ is a given function and $L_z$ is the angular momentum operator defined as
\begin{equation*}
L_z=\left\{\begin{aligned}
&i(x_2\partial_{x_1}-x_1\partial_{x_2}),\quad d\geq2,\\
&0, \qquad\qquad \qquad\quad \ \ \, d=1.
\end{aligned}\right.
\end{equation*}
When $d\geq3$, one would restrict $1<p<\frac{d+2}{d-2}$, otherwise the elliptic equation (\ref{model0}) would have no nontrivial solutions \cite{Lions1,Poh}.
With \begin{equation}\label{standing wave}
\psi(\bx,t)=\fe^{i\omega t}\phi(\bx),\quad t\geq0,\ \bx\in\bR^d,
\end{equation}
in fact (\ref{model0}) describes the stationary solution of the
rotating nonlinear Schr\"{o}dinger equation (RNLS) \cite{Baoadd0,Baoadd1} under a prescribed chemical potential $\omega\in\bR$:
\begin{equation}\label{model}
i\partial_t\psi(\bx,t)
=
-\frac{1}{2}\Delta \psi(\bx,t)
+V(\bx)\psi(\bx,t)
+\beta|\psi(\bx,t)|^{p-1}\psi(\bx,t)
-\Omega L_z \psi(\bx,t),
 \quad t>0,\ \bx\in\bR^d.
\end{equation}
In such sense, the solution $\phi(\bx)$ of (\ref{model0}) is referred as the \emph{standing wave} solution or \emph{solitary wave} solution of the RNLS. Without the rotation term, i.e., $d=1$ or $d\geq2$ with $\Omega=0$ in (\ref{model}), the RNLS widely applies in quantum mechanics, nonlinear optics and plasma physics \cite{BaoCai,Weinstein2}. With the presence of the rotation, i.e., $\Omega\neq0$ for $d=2,3$, the RNLS (\ref{model}) particularly models the Bose-Einstein Condensate in a rotational frame \cite{BaoCai,Fetter}.
Here, the function $V(\bx)$ denotes a trapping potential, e.g., a harmonic oscillator potential
$V(\bx)=\frac{1}{2}\sum_{j=1}^{d}\gamma_j^2x_j^2$ with $\gamma_j\geq0$.
The parameter $\Omega$ is interpreted as the angular velocity/rotational speed, and $\beta$ denotes the strength of nonlinear self-interaction with $\beta>0$ and $\beta<0$ representing the defocusing case and the focusing case, respectively.

Our elliptic equation (\ref{model0}) (see \cite{Lions2,Strauss} for the case without the potential and rotation terms) could have infinitely many solutions. Among all the nontrivial solutions, the one that  minimizes the \emph{action functional}
\begin{equation}\label{action}
S_{\Omega,\omega}(\phi) :=  \frac{1}{2}\|\nabla \phi\|_{L^2}^2
 +\int_{\bR^d}V|\phi|^2d\bx
 +\frac{2\beta}{p+1}
 \|\phi\|_{L^{p+1}}^{p+1}+L_\Omega(\phi)+\omega\|\phi\|_{L^2}^2
\end{equation}
with $L_\Omega(\phi):=-\Omega\int_{\bR^d}
\overline{\phi}(\bx)L_z\phi(\bx)d\bx$,
is called as the (action) \emph{ground state}, which will be denoted as $\phi_g$. Such definition of the ground state here follows \cite{Hajaiej,Lions1,Fukuizumi,Ohta,Shatah}. Apart from the ground state, the other nontrivial solutions $\phi$ of (\ref{model0}) is therefore a kind of `excited state', i.e., $S_{\Omega,\omega}(\phi)>S_{\Omega,\omega}(\phi_g)$, which is referred as the \emph{bound state} in the literature \cite{Lions1,Lions2}. Under the focusing nonlinearity ($\beta<0$), the existence of the ground state has been established in \cite{Lions1,Fukuizumi} for the non-rotating ($\Omega=0$) case of (\ref{model0}), and recently in \cite{Hajaiej} for the rotating ($\Omega\neq0$) case. For the non-rotating case, the ground state is found as a positive, smooth and exponentially localized function in space. Under certain conditions of the parameters in (\ref{model0}), e.g., the one in \cite{Lions1,Hirose}, such ground state is unique up to a phase translation which is believed to be also true in general. Therefore, in  the literature, e.g.,  \cite{Soffer1,Soffer2,Weinstein1,Weinstein2}, a ground state is also often defined as a positive and localized solution of (\ref{model0}).

The ground state/bound state solution of (\ref{model0}) has drawn a lot of research attentions. As far as we know, on the one hand, stable standing waves are useful in applications and the stability is mathematically relevant to many physical phenomena \cite{Berge,Fukuizumi,Weinstein2}. Therefore, under different parameter regimes, i.e., the range of parameters $p,d,\Omega,\omega$, many efforts have been devoted to analyzing the stability and instability of the ground state \cite{Hajaiej,Cuccagna2,Fukuizumi,Ohta,Shatah,Weinstein1,Weinstein2} and also the vortices bound state \cite{Pego}. The existing theoretical results are yet to cover all the parameter regimes, and so direct numerical simulations would be helpful. To do so, one will need to produce very accurately the standing wave solution from (\ref{model0}), and then simulate the dynamics in (\ref{model}) with (\ref{standing wave}).
On the other hand, nonlinear Schr\"odinger equations admit the multichannel dynamics  \cite{Cuccagna1,Schlag,Soffer1}, which means that asymptotically at large time, the solution of (\ref{model}) can become a linear combination of standing waves and a radiation. Such phenomenon can be governed by the modulation equations \cite{Schlag,Soffer1,Soffer2}. The modulation equations are a coupled system including (\ref{model0}), where the standing waves are exchanging `energy' $\omega=\omega(t)$ with the radiation at all times. Thus, to solve the modulation equations, e.g., the implementations in \cite{Soffer2p5,Soffer4}, an efficient and accurate algorithm for (\ref{model0}) would be vital.

While, to our best knowledge, the numerical techniques for computing the standing wave in particular the action ground state of (\ref{model0}) have not been addressed much so far. The techniques for the saddle critical points or (multiple) unstable solutions, as developed for the non-rotating case (see, e.g., \cite{MPA,HLA,LMM2001,LMM2019,SEM2005}) could be an option but is yet to cover the rotating case (\ref{model0}), and the fixed-point iteration type method \cite{Pet 1} could be another option but needs a good enough initial guess \cite{Pet 2} about the bound state of interest. If one only aims for the ground state, more effective approaches should certainly be developed and additional efforts are needed to cover the rotating case. What has been mostly addressed in the numerical literature is for a `twin' definition of the ground state \cite{Sparber,BaoCai,Lions3}: the minimizer of the energy
\begin{align}\label{energy}
 E_\Omega(u):=\frac{1}{2}\|\nabla u\|_{L^2}^2
 +\int_{\bR^d}V|u|^2d\bx+\frac{2\beta}{p+1}
 \|u\|_{L^{p+1}}^{p+1}+L_\Omega(u)
\end{align}
under a prescribed mass $\|u\|^2_{L^2}=m>0$, where the energy $E_\Omega(\cdot)$ shares the same expression as the action functional \eqref{action} after ignoring the term $\omega\|\phi\|^2_{L^2}$. We denote this minimizer by $u_g$. Note that the mass and the energy are conserved quantities in the RNLS (\ref{model}), and so this definition got preference among physicists. For computing $u_g$ and/or the exited states, many different kinds of numerical methods have been developed, including the nonlinear eigenvalue solvers \cite{Altmann,Maday,Chien,Chen,Cances,Feder}, normalized gradient flow also known as the imaginary time evolution methods \cite{BaoCai,Du,Succi,Faou,Cai,Wang,Shen0}, constrained optimization techniques \cite{ALT2017,Ostermann,Danaila1,Danaila2,Henning,Wen} and methods for saddle points \cite{CGAD,YZ2008SISC}. Among these numerical methods, the normalized gradient flow methods are particularly popular for the reason of easier applications and extensions to more complicated model setups such as the multi-components case \cite{Cai,Wang}.
 For such mass-prescribed ground state problem, the elliptic equation (\ref{model0}) is the Euler-Lagrange equation of the constrained minimization, and the chemical potential $\omega$ would be given afterwards by the ground state $u_g$ based on (\ref{model0}) as
\begin{equation}\label{omega_g}
\omega(u_g)=-\frac{1}{m}\left(\frac{1}{2}\|\nabla u_g\|_{L^2}^2
 +\int_{\bR^d}V|u_g|^2d\bx+\beta \|u_g\|_{L^{p+1}}^{p+1}
 +L_\Omega(u_g)\right).
 \end{equation}
 In contrast, the ground state problem of (\ref{action}) prescribes the chemical potential $\omega$ for $\phi_g$ and then determines the mass $m=\|\phi_g\|^2_{L^2}$.  It is not completely clear to us how the two kinds of minimization problems are related:
 \begin{equation}\label{diagram}
 \begin{array}{ccccc}\displaystyle
   \min_{\text{(\ref{model0})}}\{S_{\Omega,\omega}(\phi)\}&\longrightarrow&\phi_g&\longrightarrow&
   m\\[2pt]
   \omega&\longleftarrow &u_g&\longleftarrow&\displaystyle\min_{\|u\|^2_{L^2}=m}\{E_{\Omega}(u)\}.
 \end{array}
\end{equation}
We refer to \cite{Dovetta,Jeanjean2021} for some recent theoretical investigations in the focusing case without the rotation term.

In this paper, we aim to investigate the numerical techniques for computing the \emph{action ground state} of (\ref{action}) and explore numerically the features of the solution.
We first consider in Section \ref{sec:2} the focusing case of (\ref{action}), where we begin by reviewing the classical formulation of the problem in the literature that uses the variational characterization on a Nehari manifold \cite{Hajaiej,Fukuizumi}. By simplifying the variational characterization, we identify for the first time  an equivalent formulation of the problem, which reads as the minimization of a quadratic energy functional under an  $L^{p+1}$-spherical constraint. The new formulation leads to the simple normalized gradient flow and also the efficient numerical discretization, where we are able to obtain the unconditionally energy-decaying property. Some optimization techniques including the Barzilai-Borwein method and the conjugate gradient method are then proposed to further improve the computational efficiency, and some proper preconditioners are suggested. Then in Section \ref{sec:3}, we consider the defocusing case of (\ref{action}) which to our best knowledge has barely been addressed in the literature, and we find the story is indeed totally different from the focusing case. We begin by establishing the existence of the action ground state, and then we show that the problem in such case can be characterized by the direct minimization of the action functional without worrying about the Nehari constraint. Consequently, the direct gradient flow can be applied, and with a properly designed discretization we are able to obtain a modified action-decaying property. Corresponding preconditioned optimization methods are also given to the unconstrained minimization problem in this case. Finally, numerical results regarding the accuracy and efficiency of the proposed algorithms in the focusing and the defocusing cases are presented in Section \ref{sec:4} and some conclusions are drawn. As applications of the algorithms, the vortices patterns are captured in the ground state solution of the defocusing case, and a numerical experiment on the commutativity of the table (\ref{diagram}) is done in the end.

To present our theoretical findings, some  notations and facts are introduced below for the convenience of later use.

\

{\bf Notations and some basic facts.}
We assume $V(\mathbf{x})\geq0$ ($\forall\ \mathbf{x}\in\mathbb{R}^d$) throughout this paper, and we introduce the functional spaces
\[ L_V^2(\mathbb{R}^d) = \left\{ \phi\ :\ \int_{\mathbb{R}^d} V(\mathbf{x})|\phi(\mathbf{x})|^2 d\mathbf{x}<\infty \right\}, \quad
X=H^1(\mathbb{R}^d)\cap L_V^2(\mathbb{R}^d).  \]
Then, $X$ equipped with the inner product
\[ (u,v)_X= \int_{\mathbb{R}^d} \Big(\nabla u(\mathbf{x})\cdot\nabla\overline{v(\mathbf{x})} + \big(1+V(\mathbf{x})\big)u(\mathbf{x})\overline{v(\mathbf{x})} \Big) d\mathbf{x}, \quad\forall\ u,v\in X, \]
is a Hilbert space. It is well known that, when $V(\mathbf{x})\equiv0$ the space $X$ is exactly $H^1(\mathbb{R}^d)$ and is continuously embedded into $L^q(\mathbb{R}^d)$, where $q\in[2,\infty]$ for $d=1$, $q\in[2,\infty)$ for $d=2$, and $q\in[2,2d/(d-2)]$ for $d\geq3$. In order to obtain a compact embedding, the confining condition for $V$ is needed in our analysis {in the defocusing case}. It is stated as the following.

\begin{lemma}[\cite{BaoCai}]\label{lem:Xembed}
Assume that $V(\mathbf{x})\geq0$ ($\forall\ \mathbf{x}\in\mathbb{R}^d$) satisfies $\lim_{|\mathbf{x}|\to\infty}V(\mathbf{x}) = \infty$. Then the embedding $X\hookrightarrow L^q(\mathbb{R}^d)$ is compact, where $q\in[2,\infty]$ for $d=1$, $q\in[2,\infty)$ for $d=2$, and $q\in[2,2d/(d-2))$ for $d\geq3$.
\end{lemma}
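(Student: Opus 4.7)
The plan is to combine classical Rellich--Kondrachov compactness on bounded domains with a uniform tail estimate produced by the confining hypothesis $V(\mathbf{x})\to\infty$.

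First, I would take a bounded sequence $\{u_n\}\subset X$; since $X$ is a Hilbert space, a subsequence (still denoted $\{u_n\}$) converges weakly in $X$ to some $u\in X$. In particular, $\{u_n\}$ is bounded in $H^1(\mathbb{R}^d)$, so on each ball $B_R$ the Rellich--Kondrachov theorem furnishes a further subsequence converging strongly in $L^q(B_R)$ for every $q$ in the admissible range. A diagonal extraction over a sequence $R_k\to\infty$ then produces one subsequence which converges in $L^q(B_R)$ for every $R>0$.

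Second, I would establish uniform tail smallness from $V$. Given $\varepsilon>0$, the confining assumption provides $R_\varepsilon$ such that $V(\mathbf{x})\geq 1/\varepsilon$ whenever $|\mathbf{x}|\geq R_\varepsilon$, and therefore
\[ \int_{|\mathbf{x}|\geq R_\varepsilon}|u_n(\mathbf{x})|^2\,d\mathbf{x} \leq \varepsilon\int_{\mathbb{R}^d}V(\mathbf{x})|u_n(\mathbf{x})|^2\,d\mathbf{x} \leq \varepsilon\,\|u_n\|_X^2 \leq C\varepsilon. \]
For a general $q$ strictly below the critical Sobolev exponent, interpolation between this $L^2$ tail estimate and the uniform $L^{q^*}$ bound coming from the Sobolev embedding (choosing $q^*$ between $q$ and the critical exponent for $d\geq 3$, any large $q^*>q$ for $d=1,2$) yields $\|u_n\|_{L^q(\{|\mathbf{x}|\geq R_\varepsilon\})}\to 0$ uniformly in $n$ as $\varepsilon\to 0$. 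A standard splitting of $\mathbb{R}^d$ into $B_{R_\varepsilon}$ and its complement, combining local $L^q$ convergence with uniform tail smallness, then forces $u_n\to u$ strongly in $L^q(\mathbb{R}^d)$.

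The main obstacle is the endpoint $q=\infty$ in dimension $d=1$, which is not reachable by $L^2$--$L^{q^*}$ interpolation. For this case I would exploit the Sobolev embedding $H^1(\mathbb{R})\hookrightarrow C^{0,1/2}(\mathbb{R})$, which furnishes a uniform Hölder bound for $\{u_n\}$. The uniform Hölder control together with the $L^2$ tail smallness above forces $u_n(x)\to 0$ uniformly in $n$ as $|x|\to\infty$, since a large pointwise value near infinity would, by the Hölder modulus, produce a large $L^2$ mass there, contradicting the tail estimate. Arzelà--Ascoli on each bounded interval then upgrades the local convergence to uniform convergence on compacta, and the uniform tail decay promotes this to uniform convergence on all of $\mathbb{R}$, completing the endpoint case.
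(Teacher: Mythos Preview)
Your argument is correct and follows the standard route for this compactness result: Rellich--Kondrachov on balls combined with a uniform $L^2$ tail estimate coming from the confining potential, then interpolation against a Sobolev bound to reach the full range of $q$, with the $d=1$, $q=\infty$ endpoint handled via the $C^{0,1/2}$ embedding and Arzel\`a--Ascoli. Note, however, that the paper does not supply its own proof of this lemma; it is simply quoted from \cite{BaoCai}, so there is no in-paper argument to compare against. Your write-up is a faithful reconstruction of the classical proof one would expect in that reference.
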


As a direct conclusion from Young's inequality, the rotational term can be controlled as follows.
\begin{lemma}\label{lem:rotenergy}
Let $d\geq2$. For any constant $\delta>0$,
\begin{align}
\left| \Omega\int_{\mathbb{R}^d} \overline{\phi}L_z\phi\, d\mathbf{x}\right|
&\leq \int_{\mathbb{R}^d} \left(\frac{\delta}{2}|\nabla\phi|^2 +\frac{|\Omega|^2}{2\delta}(x_1^2+x_2^2)|\phi|^2 \right) d\mathbf{x}.
\end{align}
\end{lemma}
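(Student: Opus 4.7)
The plan is to reduce the inequality to a pointwise estimate followed by Young's inequality, exactly as the lemma's statement advertises. First I would substitute the definition $L_z\phi = i(x_2\partial_{x_1}\phi - x_1\partial_{x_2}\phi)$ into the left-hand side, move the absolute value inside the integral, and bound
\[ \left|\Omega\int_{\mathbb{R}^d}\overline{\phi}\,L_z\phi\,d\mathbf{x}\right| \leq |\Omega|\int_{\mathbb{R}^d}|\phi|\cdot\bigl|x_2\partial_{x_1}\phi - x_1\partial_{x_2}\phi\bigr|\,d\mathbf{x}. \]

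Next I would apply the Cauchy--Schwarz inequality in $\mathbb{R}^2$ to the vectors $(x_2,-x_1)$ and $(\partial_{x_1}\phi,\partial_{x_2}\phi)$, noting that $|(x_2,-x_1)| = \sqrt{x_1^2+x_2^2}$ and $|(\partial_{x_1}\phi,\partial_{x_2}\phi)| \leq |\nabla\phi|$. This yields the pointwise bound
\[ |\phi|\cdot\bigl|x_2\partial_{x_1}\phi - x_1\partial_{x_2}\phi\bigr| \leq \sqrt{x_1^2+x_2^2}\,|\phi|\cdot|\nabla\phi|. \]

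Finally I would apply Young's inequality $AB \leq \frac{\delta}{2}A^2 + \frac{1}{2\delta}B^2$ with $A=|\nabla\phi|$ and $B = |\Omega|\sqrt{x_1^2+x_2^2}\,|\phi|$, then integrate over $\mathbb{R}^d$ to land on the claimed estimate. There is no real obstacle here; the only point requiring a moment of care is verifying that the cross term involving $x_1\partial_{x_2}$ pairs correctly with the in-plane gradient so that only $|\nabla\phi|^2$ (rather than some larger quantity) appears on the right, which is immediate from the observation that $(x_2,-x_1)\perp(x_1,x_2)$ in the $x_1x_2$-plane.
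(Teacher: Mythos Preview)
Your proposal is correct and matches the paper's approach: the paper simply states that the lemma is ``a direct conclusion from Young's inequality'' without giving any further details, and your argument (triangle inequality, Cauchy--Schwarz on the in-plane vectors, then Young's inequality $AB\leq \frac{\delta}{2}A^2+\frac{1}{2\delta}B^2$) is exactly the natural way to flesh this out. The only superfluous remark is the final one about $(x_2,-x_1)\perp(x_1,x_2)$---that orthogonality plays no role; all you use is the Cauchy--Schwarz bound $|(x_2,-x_1)\cdot(\partial_{x_1}\phi,\partial_{x_2}\phi)|\leq \sqrt{x_1^2+x_2^2}\,|\nabla\phi|$.
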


Applying the above two lemmas, we obtain the following result.
\begin{lemma}\label{lem:S-welldef}
Let $1<p<\frac{d+2}{d-2}$ for $d\geq3$ and $1<p<\infty$ for $d=1,2$. Assume that one of the following holds:
\begin{enumerate}[(i)]
  \item {$\Omega=0$ and $V(\mathbf{x})=0$ ($\forall\ \mathbf{x}\in\mathbb{R}^d$);}
  \item $\Omega=0$ and $V(\mathbf{x})\geq0$ ($\forall\ \mathbf{x}\in\mathbb{R}^d$) satisfies $\lim_{|\mathbf{x}|\to\infty}V(\mathbf{x})=\infty$;
  \item $d\geq2$, $V(\mathbf{x})=\frac12\sum_{j=1}^d\gamma_j^2x_j^2$ with $\gamma_j>0$ and $|\Omega|<\min\{\gamma_1,\gamma_2\}$.
\end{enumerate}
Then, the action functional $S_{\Omega,\omega}(\phi)$ \eqref{action} is well-defined for any $\phi\in X$.
\end{lemma}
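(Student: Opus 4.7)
The plan is to check term by term that each of the five summands in $S_{\Omega,\omega}(\phi)$ is finite for every $\phi \in X$, under each of the three hypotheses. Two of the terms, namely $\tfrac12\|\nabla\phi\|_{L^2}^2$ and $\omega\|\phi\|_{L^2}^2$, are automatically finite because $X \subset H^1(\bR^d)$. The potential term $\int_{\bR^d} V|\phi|^2\,d\bx$ is finite by the very definition of $X \subset L_V^2(\bR^d)$. So the only non-trivial contributions to handle are the nonlinear term $\tfrac{2\beta}{p+1}\|\phi\|_{L^{p+1}}^{p+1}$ and the rotation term $L_\Omega(\phi)$.

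For the nonlinear term, the task reduces to verifying a continuous Sobolev-type embedding $X \hookrightarrow L^{p+1}(\bR^d)$ for the admissible range of $p$. In case (i) we have $X = H^1(\bR^d)$, so the classical Sobolev embedding recalled right before Lemma \ref{lem:Xembed} immediately gives $\phi \in L^{p+1}(\bR^d)$ under the stated restriction on $p$. In cases (ii) and (iii) the potential $V$ is confining (in (iii) the harmonic oscillator $V = \tfrac12\sum\gamma_j^2 x_j^2$ tends to $\infty$ at infinity since every $\gamma_j>0$), hence Lemma \ref{lem:Xembed} provides the compact, and therefore continuous, embedding $X \hookrightarrow L^{p+1}(\bR^d)$ within the admissible range of exponents.

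For the rotation term, cases (i) and (ii) are trivial since $\Omega=0$ forces $L_\Omega(\phi) = 0$. In case (iii), one invokes Lemma \ref{lem:rotenergy} with any fixed $\delta>0$ to obtain
\begin{equation*}
|L_\Omega(\phi)| \leq \frac{\delta}{2}\|\nabla\phi\|_{L^2}^2 + \frac{|\Omega|^2}{2\delta}\int_{\bR^d} (x_1^2+x_2^2)|\phi|^2\,d\bx.
\end{equation*}
The first summand is finite since $\phi \in H^1(\bR^d)$. For the second, the structure of $V$ yields the pointwise lower bound $V(\bx) \geq \tfrac12\min\{\gamma_1^2,\gamma_2^2\}(x_1^2+x_2^2)$, so
\begin{equation*}
\int_{\bR^d}(x_1^2+x_2^2)|\phi|^2\,d\bx \leq \frac{2}{\min\{\gamma_1^2,\gamma_2^2\}}\int_{\bR^d} V|\phi|^2\,d\bx < \infty,
\end{equation*}
again because $\phi \in L_V^2(\bR^d)$. (The sharper assumption $|\Omega|<\min\{\gamma_1,\gamma_2\}$ is not actually required for well-definedness; it will become relevant only later, for coercivity / boundedness from below of $S_{\Omega,\omega}$.)

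The only mildly delicate point is the rotation-term bound in case (iii), where one must realize that the weight $(x_1^2+x_2^2)$ appearing in Lemma \ref{lem:rotenergy} is majorised by the harmonic potential $V$ itself; once this observation is made, everything reduces to Lemma \ref{lem:Xembed}, Lemma \ref{lem:rotenergy}, and the definition of $X$. Putting together the finiteness of all five summands in each of the three cases yields the stated well-definedness of $S_{\Omega,\omega}$ on $X$.
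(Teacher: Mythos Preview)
Your proof is correct and matches the paper's approach: the paper does not give an explicit proof but simply states that the result follows by ``applying the above two lemmas'' (Lemma~\ref{lem:Xembed} and Lemma~\ref{lem:rotenergy}), and your argument spells out precisely this verification term by term. Your parenthetical remark that the hypothesis $|\Omega|<\min\{\gamma_1,\gamma_2\}$ is not needed for mere well-definedness (only later for coercivity) is a nice observation that the paper does not make explicit.
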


\section{Variational characterization and numerical methods in focusing case}\label{sec:2}

In this section, we consider the focusing case of (\ref{model0}), i.e., $\beta<0$. We first review the study of the action ground state problem \eqref{action} in the literature. Then, we prove that the problem can be equivalently characterized by the minimization of a quadratic functional under an $L^{p+1}$-spherical constraint. Based on the simplified formulation, the normalized gradient flow approach and some preconditioned optimization methods are presented to compute the action ground state.

\subsection{Variational characterization with Nehari constraint}
We begin by briefly reviewing the formulation of the action ground state problem and the existence results in the literature.
The action ground state that we are interested in is the nontrivial solution of the elliptic equation (\ref{model0}) which minimizes the action functional $S_{\Omega,\omega}(\phi)$ \eqref{action}. Note that the gradient or variation of  \eqref{action} is
$$
\frac{\delta S_{\Omega,\omega}(\phi)}{\delta
\overline{\phi}}=\left(-\frac{1}{2}\Delta +V+\beta|\phi|^{p-1}-\Omega L_z
+\omega  \right)\phi=:H_\phi(\phi),
 $$
 and so (\ref{model0}) simply reads $H_\phi(\phi)=0$ by the notation.
As given in \cite{Hajaiej,Lions1,Fukuizumi}, the action ground state is then defined rigorously as
\begin{equation}\label{phi_g-def}
\phi_g\in\arg \min\{S_{\Omega,\omega}(\phi)\ : \ \phi\in X\backslash\{0\},\ H_\phi(\phi)=0\}.
\end{equation}
Since $p>1$ and $\beta<0$, for any fixed $\phi\in X\backslash\{0\}$, $\lim_{\rho\to\infty}S_{\Omega,\omega}(\rho\phi)=-\infty$. Thus the functional $S_{\Omega,\omega}$ is not bounded from below in $X$, and so it is necessary to consider $H_\phi(\phi)=0$ as a constraint. The $L^2$-inner product of the equation (\ref{model0}) with $\phi$ suggests the following \emph{Nehari functional}
\begin{equation}\label{Nehari}
 K_{\Omega,\omega}(\phi):=\frac12\|\nabla \phi\|_{L^2}^2
 +\int_{\bR^d}V|\phi|^2d\bx+\beta \|\phi\|_{L^{p+1}}^{p+1}
 +L_\Omega(\phi)+\omega\|\phi\|_{L^2}^2,
 \end{equation}
and $K_{\Omega,\omega}(\phi)=0$ defines the so-called Nehari manifold
\begin{equation}\label{NehariManifold}
\mathcal{M}:=\{\phi\in X\backslash\{0\},\,K_{\Omega,\omega}(\phi)=0\},
\end{equation}
which contains all nontrivial solutions to (\ref{model0}). By the variational argument, the action ground state defined in (\ref{phi_g-def}) can be equivalently written as the minimizer of (\ref{action}) on $\mathcal{M}$ \cite{Hajaiej,Fukuizumi}, i.e.,
 \begin{equation}\label{min}
\text{(\ref{phi_g-def})}\Longleftrightarrow\phi_g\in\arg \min\{S_{\Omega,\omega}(\phi)\ : \ \phi\in \mathcal{M}\}.
\end{equation}
To make this constrained minimization problem well-defined mathematically, we need a lower bound for $S_{\Omega,\omega}(\phi)$ and the set $\mathcal{M}$ being nonempty.

It is clear that under the constraint $K_{\Omega,\omega}(\phi)=0$, we have
\begin{equation}\label{action eq}
S_{\Omega,\omega}(\phi)=S_{\Omega,\omega}(\phi)-K_{\Omega,\omega}(\phi)= -\frac{p-1}{p+1}\beta\|\phi\|_{L^{p+1}}^{p+1}.
\end{equation}
Since $p>1$ and $\beta<0$, the action functional $S_{\Omega,\omega}$ restricted to $\mathcal{M}$ has a natural lower bound, i.e., $S_{\Omega,\omega}(\phi)\geq0$, $\forall\,\phi\in\mathcal{M}$. When the potential $V$ is further considered as the harmonic oscillator type: $V(\bx)=\frac{1}{2}\sum_{j=1}^{d}\gamma_j^2x_j^2$, the linear operator $-\frac12\Delta+V-\Omega L_z$ has the purely discrete spectrum \cite{Ueki}. If we denote
\begin{equation}\label{lambda_0}
\lambda_0=\inf\left\{\frac{1}{2}\|\nabla u\|_{L^2}^2+\int_{\bR^d}V|u|^2d\bx +L_\Omega(u): \|u\|_{L^2}=1\right\},
 \end{equation}
 then one finds that for any $\phi$,
 \begin{align}\label{Kineq}
   K_{\Omega,\omega}(\phi)\geq (\lambda_0+\omega)\|\phi\|_{L^2}^2+\beta\|\phi\|_{L^{p+1}}^{p+1}.
 \end{align}
Since $\beta<0$, one can clearly get a nontrivial solution $\phi$ for $K_{\Omega,\omega}(\phi)=0$ when $\lambda_0+\omega>0$, e.g., by a scaling $\phi=\rho u_0$ with some $\rho>0$ and $u_0$ is the minimizer of (\ref{lambda_0}). In such case, the existence of the action ground state of (\ref{min}) has been established in the non-rotating regime $d=1$ or $d\geq2$ and $\Omega=0$ in \cite{Fukuizumi}, and in the rotating regime $d\geq2$ in \cite{Hajaiej}. This is stated as follows.
\begin{lemma}[\cite{Hajaiej,Fukuizumi}]\label{lem:focusing-min-existence}
Let $V(\bx)=\frac{1}{2}\sum_{j=1}^{d}\gamma_j^2x_j^2,$ and $1<p<\frac{d+2}{d-2}$ for $d\geq3$ and $1<p<\infty$ for $d=1,2$ in \eqref{model0}. If $\beta<0$ and $|\Omega|<\min\{\gamma_1,\gamma_2\}$, then for any $\omega>-\lambda_0$, there exists a minimizer $\phi_g$ for \eqref{min}
 which solves \eqref{model0}. Moreover when $V$ is isotropic, $\phi_g$ can be chosen as a positive function after a shift in the phase.
\end{lemma}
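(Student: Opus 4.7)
My plan is to follow the classical Nehari-manifold strategy of \cite{Hajaiej,Fukuizumi}, adapted to the rotating/trapping setting. I would first isolate the quadratic part $A(\phi) := \frac12\|\nabla\phi\|_{L^2}^2 + \int_{\bR^d}V|\phi|^2\,d\bx + L_\Omega(\phi) + \omega\|\phi\|_{L^2}^2$, so that $K_{\Omega,\omega}(\phi) = A(\phi) + \beta\|\phi\|_{L^{p+1}}^{p+1}$ and $S_{\Omega,\omega}(\phi) = A(\phi) + \frac{2\beta}{p+1}\|\phi\|_{L^{p+1}}^{p+1}$. The key preliminary, and what I expect to be the main technical obstacle, is the coercivity estimate $A(\phi)\ge c\|\phi\|_X^2$ for some $c>0$. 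The two hypotheses $|\Omega|<\min\{\gamma_1,\gamma_2\}$ and $\omega>-\lambda_0$ must enter here together: the former, combined with Lemma \ref{lem:rotenergy} and the harmonic form of $V$, underpins the spectral lower bound $-\frac12\Delta+V-\Omega L_z\ge \lambda_0$ with $\lambda_0>0$ on $X$ (a nontrivial fact about the rotating harmonic oscillator, cf.\ \cite{Ueki}), and adding $\omega>-\lambda_0$ then makes $A$ a strictly positive, coercive quadratic form on $X$.

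With coercivity established, the remaining pieces fall into place. Nonemptiness of $\mathcal{M}$ follows from the scalar projection argument already sketched in the excerpt: for any $\phi\in X\setminus\{0\}$, the map $\rho\mapsto K_{\Omega,\omega}(\rho\phi) = \rho^2 A(\phi) + \rho^{p+1}\beta\|\phi\|_{L^{p+1}}^{p+1}$ has a unique positive root because $A(\phi)>0$, $\beta<0$, $p>1$. On $\mathcal{M}$, identity \eqref{action eq} collapses the action to $-\frac{p-1}{p+1}\beta\|\phi\|_{L^{p+1}}^{p+1}\ge0$, and the Nehari relation $A(\phi)=-\beta\|\phi\|_{L^{p+1}}^{p+1}$, combined with coercivity of $A$ and the Sobolev embedding $X\hookrightarrow L^{p+1}(\bR^d)$, yields a uniform lower bound $\|\phi\|_X\ge \kappa>0$ on $\mathcal{M}$. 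Hence the infimum $S_g := \inf_{\mathcal{M}} S_{\Omega,\omega}$ is both finite and strictly positive.

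Next I would take a minimizing sequence $\{\phi_n\}\subset\mathcal{M}$. Boundedness of $S_{\Omega,\omega}(\phi_n)$ bounds $\|\phi_n\|_{L^{p+1}}$ by \eqref{action eq}; via the Nehari relation and coercivity, $\{\phi_n\}$ is bounded in $X$. Because the harmonic potential satisfies $V(\bx)\to\infty$ as $|\bx|\to\infty$, Lemma \ref{lem:Xembed} supplies a compact embedding $X\hookrightarrow L^{p+1}(\bR^d)$, so along a subsequence $\phi_n\rightharpoonup \phi_g$ in $X$ and $\phi_n\to\phi_g$ strongly in $L^{p+1}$. The uniform $L^{p+1}$ lower bound passes to the limit, forcing $\phi_g\not\equiv 0$. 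Weak lower semicontinuity of the coercive quadratic form $A$ together with strong $L^{p+1}$ convergence of the nonlinear term gives $K_{\Omega,\omega}(\phi_g)\le 0$; were the inequality strict, rescaling $\phi_g\mapsto\rho_*\phi_g$ with some $\rho_*\in(0,1)$ would place it on $\mathcal{M}$ with strictly smaller action, contradicting minimality. Hence $\phi_g\in\mathcal{M}$ and, by lower semicontinuity again, $S_{\Omega,\omega}(\phi_g)\le S_g$, so $\phi_g$ is a minimizer. A Lagrange-multiplier argument $S'_{\Omega,\omega}(\phi_g)=\mu K'_{\Omega,\omega}(\phi_g)$ paired with $\phi_g$ gives $\tfrac{d}{dt}\big|_{t=1}S_{\Omega,\omega}(t\phi_g) = 2K_{\Omega,\omega}(\phi_g)=0$ on the left and $(p-1)\beta\|\phi_g\|_{L^{p+1}}^{p+1}\ne 0$ on the right, so $\mu=0$ and $\phi_g$ solves the Euler--Lagrange equation \eqref{model0}.

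For the positivity statement, which applies in the isotropic non-rotating regime of \cite{Fukuizumi}, I would replace $\phi_g$ by $|\phi_g|$; the diamagnetic inequality and the invariance of $\|\cdot\|_{L^2}$, $\|\cdot\|_{L^{p+1}}$ and $\int V|\cdot|^2$ under $\phi\mapsto|\phi|$ show that $|\phi_g|$ is still a minimizer, hence a nonnegative solution of \eqref{model0}, and the strong maximum principle upgrades nonnegativity to strict positivity after absorbing a constant phase. As emphasized above, the genuine difficulty throughout is the coercivity of $A$; once that is in place, the rest of the argument is a standard concentration-compactness/Nehari template, with Lemma \ref{lem:Xembed} replacing the translation-invariance technology that a potential-free setting would otherwise require.
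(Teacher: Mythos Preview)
The paper does not give its own proof of this lemma; it is quoted verbatim from \cite{Hajaiej,Fukuizumi} and stated without argument, so there is no in-paper proof to compare against. Your outline is exactly the standard Nehari-manifold strategy those references follow, and the existence portion (coercivity of $A$ under the two hypotheses, nonemptiness of $\mathcal{M}$, compactness from Lemma~\ref{lem:Xembed}, the rescaling trick to land back on $\mathcal{M}$, and the Lagrange-multiplier step) is correctly sketched.

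There is, however, a real gap in your positivity paragraph. You explicitly restrict yourself to ``the isotropic non-rotating regime of \cite{Fukuizumi},'' whereas the lemma asserts positivity whenever $V$ is isotropic, \emph{including} $\Omega\neq 0$. The diamagnetic substitution $\phi_g\mapsto|\phi_g|$ does not settle the rotating case: while $L_\Omega(|\phi_g|)=0$ for the real function $|\phi_g|$, the term $L_\Omega(\phi_g)$ can have either sign, so one cannot conclude $A(|\phi_g|)\le A(\phi_g)$ from diamagnetism alone, and hence cannot conclude that $|\phi_g|$ lies on (or below) $\mathcal{M}$ with no larger action. The argument in \cite{Hajaiej} uses the additional structure available when $\gamma_1=\gamma_2$: the operator $-\tfrac12\Delta+V-\Omega L_z$ then commutes with $L_z$, which lets one reduce (via angular-momentum decomposition or an equivalent rotating-frame change of variables) to a problem without the rotation term and with the same radial potential, after which the Fukuizumi-type diamagnetic/maximum-principle argument applies. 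Without inserting this reduction, your proof leaves the positivity claim unproved for $\Omega\neq 0$.
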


{\begin{remark}
The result of Lemma~\ref{lem:focusing-min-existence} also holds in the non-rotating case without the potential (i.e., $\beta<0$, $\Omega=0$, and $V\equiv0$); see, e.g., \cite{Lions1,Jeanjean2003}.
\end{remark}}

To solve the minimization problem \eqref{min} on the constraint manifold $\mathcal{M}$, a first natural attempt would be a standard projected gradient flow of the form $\partial_t\phi=-H_{\phi}(\phi)+\lambda(\phi) \frac{\delta K_{\Omega,\omega}(\phi)}{\delta\overline{\phi}}$, where $\lambda(\phi)$ serves as a Lagrange multiplier to preserve the constraint $K_{\Omega,\omega}(\phi)=0$. This approach indeed works at the continuous level, but it is troublesome for numerical discretizations in general. This is mainly due to the complexity of the Nehari constraint and the Lagrange multiplier.
{Alternatively, the reader may refer to \cite{ChushanWang} for a recently proposed normalized gradient flow method for the minimization problem \eqref{min} in the non-rotating regime.}
In the next subsection, we are going to propose a simplified variational characterization for the action ground state.

\subsection{A simplified variational characterization}
Here, we consider to simplify \eqref{min} into an equivalent formulation. We denote the $L^{p+1}$ unit sphere in $X$ by $\mathcal{S}_{p+1}=\{u\in X \,:\,\|u\|_{L^{p+1}}=1\}$ and
introduce a quadratic energy functional (i.e., the quadratic part in the action functional) as
\begin{equation} \label{Q def}
% Q(u):=\int_{\mathbb{R}^d} \left(\frac12|\nabla u|^2 + \big(V+\omega\big)|u|^2 - \Omega\, \overline{u}L_zu \right) d\mathbf{x}.
Q(u):= \frac{1}{2}\|\nabla u\|_{L^2}^2+\int_{\bR^d}V|u|^2d\bx +L_\Omega(u) +\omega\|u\|_{L^2}^2.
\end{equation}
For any $\phi\in\mathcal{M}$, we have from $K_{\Omega,\omega}(\phi)=Q(\phi)+\beta\|\phi\|_{L^{p+1}}^{p+1}=0$ that $Q(\phi/\|\phi\|_{L^{p+1}})=-\beta\|\phi\|_{L^{p+1}}^{p-1}$. Further, we present the following result.

\begin{theorem}\label{thm:minQ}
Under the same assumptions as in Lemma~\ref{lem:S-welldef}, if $\beta<0$ {and $\omega>-\lambda_0$}, then the following holds:
\begin{enumerate}[(i)]
  \item the $L^{p+1}$-normalization $\phi\mapsto \phi/\|\phi\|_{L^{p+1}}$ is a bijection from $\mathcal{M}$ to $\mathcal{S}_{p+1}$;
  \item $\phi_*\in\mathcal{M}$ minimizes the action functional $S_{\Omega,\omega}$ on the Nehari manifold $\mathcal{M}$ if and only if its $L^{p+1}$-normalization $u_*:=\phi_*/\|\phi_*\|_{L^{p+1}}\in \mathcal{S}_{p+1}$ minimizes the quadratic functional $Q$ on $\mathcal{S}_{p+1}$, i.e.,
\begin{align}\label{eq:minQ}
Q(u_*)=\min_{u\in \mathcal{S}_{p+1}} Q(u).
\end{align}
\end{enumerate}
\end{theorem}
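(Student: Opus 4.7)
The strategy is to exploit the scaling structure of the Nehari functional and reduce everything to a one-dimensional root equation. Observing that each quadratic term in $Q$ and $L_\Omega$ is $2$-homogeneous while $\|u\|_{L^{p+1}}^{p+1}$ is $(p+1)$-homogeneous, a direct computation gives, for every $u\in X$ and $\rho>0$,
\begin{equation*}
K_{\Omega,\omega}(\rho u) = \rho^2 Q(u) + \beta \rho^{p+1}\|u\|_{L^{p+1}}^{p+1}.
\end{equation*}
When $u\in\mathcal{S}_{p+1}$ this factors as $\rho^2\bigl(Q(u) - |\beta|\rho^{p-1}\bigr)$, so the equation $K_{\Omega,\omega}(\rho u) = 0$ has the unique positive root $\rho(u) := (Q(u)/|\beta|)^{1/(p-1)}$, provided $Q(u)>0$. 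The whole theorem then reduces to establishing this positivity.

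To that end, I would apply the variational definition of $\lambda_0$ in \eqref{lambda_0} after rescaling $u$ to unit $L^2$-norm: for every $u\in X\setminus\{0\}$,
\begin{equation*}
\frac{1}{2}\|\nabla u\|_{L^2}^2 + \int_{\bR^d}V|u|^2\,d\bx + L_\Omega(u) \;\geq\; \lambda_0\|u\|_{L^2}^2,
\end{equation*}
whence $Q(u) \geq (\lambda_0+\omega)\|u\|_{L^2}^2 > 0$ thanks to the hypothesis $\omega>-\lambda_0$. In case (i) of Lemma \ref{lem:S-welldef} one has $\lambda_0=0$ (via standard dilation arguments in free $H^1$) and the hypothesis reduces to $\omega>0$, for which the same lower bound still gives $Q(u)\geq\omega\|u\|_{L^2}^2>0$.

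With $\rho(u)$ well-defined and strictly positive, part (i) is immediate: $u\mapsto\rho(u)u$ sends $\mathcal{S}_{p+1}$ into $\mathcal{M}$ and is a two-sided inverse for $\phi\mapsto\phi/\|\phi\|_{L^{p+1}}$, since any $\phi\in\mathcal{M}$ equals $\|\phi\|_{L^{p+1}}$ times its normalization $u$, and the uniqueness of the root forces $\|\phi\|_{L^{p+1}}=\rho(u)$. For part (ii), I would combine the identity \eqref{action eq} with the explicit formula for $\rho(u)$: for $\phi=\rho(u)u\in\mathcal{M}$,
\begin{equation*}
S_{\Omega,\omega}(\phi) \;=\; \tfrac{p-1}{p+1}|\beta|\,\rho(u)^{p+1} \;=\; \tfrac{p-1}{p+1}|\beta|^{-2/(p-1)}\,Q(u)^{(p+1)/(p-1)}.
\end{equation*}
Since $t\mapsto t^{(p+1)/(p-1)}$ is strictly increasing on $(0,\infty)$ and $Q>0$ on $\mathcal{S}_{p+1}$, minimizing $S_{\Omega,\omega}$ on $\mathcal{M}$ is equivalent, under the bijection of (i), to minimizing $Q$ on $\mathcal{S}_{p+1}$, with minimizers in one-to-one correspondence.

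The only genuinely substantive step is the strict positivity $Q>0$, which is precisely why the hypothesis $\omega>-\lambda_0$ is imposed; everything else is a scaling computation. I do not expect any serious obstacle beyond verifying that each term of $Q$ is well-defined on $X$, which is already ensured by Lemma \ref{lem:S-welldef}.
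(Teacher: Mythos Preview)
Your proof is correct and follows essentially the same approach as the paper: both arguments hinge on the positivity $Q(u)\geq(\lambda_0+\omega)\|u\|_{L^2}^2>0$ for $u\in\mathcal{S}_{p+1}$, construct the inverse map $u\mapsto(-Q(u)/\beta)^{1/(p-1)}u$, and then reduce part~(ii) to the identity $S_{\Omega,\omega}(\phi)=\tfrac{p-1}{p+1}(-\beta)^{-2/(p-1)}Q(\phi/\|\phi\|_{L^{p+1}})^{(p+1)/(p-1)}$ combined with the monotonicity of $t\mapsto t^{(p+1)/(p-1)}$. The only cosmetic difference is that you frame the bijection via the scaling root $\rho(u)$ and its inverse, while the paper verifies injectivity and surjectivity separately---but the underlying computation is identical.
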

\begin{proof}
{\em (i).}
%For any $\phi\in\mathcal{M}$, we have $K_{\Omega,\omega}(\phi)=Q(\phi)+\beta\|\phi\|_{L^{p+1}}^{p+1}=0$ and therefore $Q(\phi)=-\beta\|\phi\|_{L^{p+1}}^{p+1}>0$.
If $\phi_1,\phi_2\in\mathcal{M}$ satisfy $\phi_1/\|\phi_1\|_{L^{p+1}}=\phi_2/\|\phi_2\|_{L^{p+1}}$, then
\[ \|\phi_1\|_{L^{p+1}}=\left[\frac{-1}{\beta}Q\left(\frac{\phi_1}{\|\phi_1\|_{L^{p+1}}}\right)\right]^{\frac{1}{p-1}}=\left[\frac{-1}{\beta}Q\left(\frac{\phi_2}{\|\phi_2\|_{L^{p+1}}}\right)\right]^{\frac{1}{p-1}}=\|\phi_2\|_{L^{p+1}}, \]
and $\phi_1=\phi_2$. The injectivity is verified. To prove the surjectivity, consider a $u\in \mathcal{S}_{p+1}$. It is noted that $Q(u)\geq(\omega+\lambda_0)\|u\|_{L^2}^2>0$. Defining $\phi_{u}:=\left(-Q(u)/\beta\right)^{\frac{1}{p-1}}u$, we have $\phi_u\in\mathcal{M}$ and $u=\phi_u/\|\phi_u\|_{L^{p+1}}$. The surjectivity follows from the arbitrariness of $u\in \mathcal{S}_{p+1}$.

{\em (ii).} The assertion is straightforward by applying (i) and noting that for any $\phi\in\mathcal{M}$,
% It is observed that for any $\phi\in\mathcal{M}$, $Q(\phi/\|\phi\|_{L^{p+1}})=-\beta\|\phi\|_{L^{p+1}}^{p-1}>0$ and
\[ S_{\Omega,\omega}(\phi)= -\beta\frac{p-1}{p+1}\|\phi\|_{L^{p+1}}^{p+1}=
\frac{p-1}{p+1}\left(-\beta\right)^{-\frac{2}{p-1}}\left[Q\left(\frac{\phi}{\|\phi\|_{L^{p+1}}}\right)\right]^{\frac{p+1}{p-1}}.
\qedhere\]
% Applying (i), the assertion holds.
\end{proof}

Theorem~\ref{thm:minQ} states that the constrained minimization (\ref{min}) for the action ground state $\phi_g\in\mathcal{M}$ is equivalent to the minimization of its $L^{p+1}$-normalization with respect to the quadratic functional $Q$ (\ref{Q def}). Thus in practice, one only needs to find the minimizer $u_*$ of \eqref{eq:minQ}, and then the action ground state for (\ref{min}) is obtained as
\begin{equation}\label{Q relation} \phi_g(\mathbf{x})=\left(\frac{Q(u_*)}{-\beta}\right)^{\frac{1}{p-1}}u_*(\mathbf{x}). \end{equation}
It is interesting to note in additional that the minimization problem \eqref{eq:minQ} does not involve the parameter $\beta$. Compared to (\ref{min}), now the functional and the constraint in \eqref{eq:minQ} are both simplified, which is important for numerical discretizations.

\subsection{Normalized gradient flow and its temporal discretization}
In order to solve \eqref{eq:minQ}, it is natural to consider the normalized gradient flow approach which has been widely applied for the mass-prescribed ($L^2$-normalized) ground state problem \cite{BaoCai,Du,Succi,Cai,Wang}. The continuous normalized gradient flow for \eqref{eq:minQ} reads as
\begin{align}\label{eq:pgf-minQ}
\partial_tu = \left(\frac12\Delta - V-\omega + \Omega L_z + \lambda(u)|u|^{p-1}\right) u,\quad t\geq0,
\end{align}
where $\lambda(u)$ is to preserve the constraint $\|u\|_{L^{p+1}}=1$, i.e.,
\begin{align}\label{eq:pgf-minQ-cons}
 \frac{d}{dt}\int_{\mathbb{R}^d}|u|^{p+1}d\mathbf{x}=0.
\end{align}
The constraint-preserving condition \eqref{eq:pgf-minQ-cons} implies
\begin{align}\label{eq:pgf-minQ-la}
 \lambda(u)=\frac{\int_{\mathbb{R}^d}|u|^{p-1}\overline{u}\left(-\frac12\Delta u+ (V+\omega)u - \Omega L_zu\right)d\mathbf{x}}{\int_{\mathbb{R}^d}|u|^{2p}d\mathbf{x}}.
\end{align}

Although this standard continuous normalized gradient flow \eqref{eq:pgf-minQ} after some appropriate discretization could be effective for solving \eqref{eq:minQ}, we note two clear drawbacks of it:  (i) The Lagrange multiplier $\lambda(u)$ given in \eqref{eq:pgf-minQ-la} calls for the strong regularity requirement on $u$, and so it may not be well-defined for an arbitrarily chosen initial data $u(\cdot,t=0)\in \mathcal{S}_{p+1}$; (ii) The strong nonlinearity involved in the numerator of  \eqref{eq:pgf-minQ-la} makes it difficult to construct an unconditionally energy stable linear scheme for \eqref{eq:pgf-minQ}.

It is noted that the Euler-Lagrange equation to \eqref{eq:minQ} is a nonlinear eigenvalue problem for $(\lambda,u)$ as
\begin{align}\label{eq:EL-minQ}
-\frac12\Delta u+ (V+\omega)u - \Omega L_z u = \lambda |u|^{p-1}u,\quad
\|u\|_{L^{p+1}}=1.
\end{align}
If $u_*\in \mathcal{S}_{p+1}$ is an eigenfunction, the corresponding eigenvalue $\lambda_*$ can be computed by taking the $L^2$-inner product of the first equation in \eqref{eq:EL-minQ} with $\overline{u}_*$, which yields
\begin{align}\label{eq:minQ-lamstar}
\lambda_* = \frac{Q(u_*)}{\int_{\mathbb{R}^d}|u_*|^{p+1}d\mathbf{x}}=Q(u_*).
\end{align}
Based on the observation \eqref{eq:minQ-lamstar}, we now propose a \emph{discrete normalized gradient flow with asymptotic Lagrange multiplier (GFALM)} to minimize $Q$ on $\mathcal{S}_{p+1}$.

Set $t_n=n\tau$, $n\geq0$, with $\tau>0$ a given time step. The proposed GFALM reads
\begin{align}\label{eq:gfalm-minQ}
\left\{\begin{aligned}
&\partial_tu = \left(\frac12\Delta - V-\omega + \Omega L_z + \widetilde{\lambda}(u(\cdot,t_n))|u(\cdot,t_n)|^{p-1}\right) u, \quad t\in[t_n,t_{n+1}), \\
& u(\mathbf{x},t_{n+1}):=u(\mathbf{x},t_{n+1}^+)=\frac{u(\mathbf{x},t_{n+1}^-)}{\|u(\cdot,t_{n+1}^-)\|_{L^{p+1}}},\quad n\geq0, \quad u(\mathbf{x},0)=u_0(\mathbf{x}),
\end{aligned}\right.
\end{align}
where $u_0\in \mathcal{S}_{p+1}$ is an initial guess for the minimizer of \eqref{eq:minQ} and $\widetilde{\lambda}$ is an {\em asymptotic Lagrange multiplier} defined as
\begin{align}\label{eq:gfalm-minQ-la}
\widetilde{\lambda}(u)=\frac{Q(u)}{\int_{\mathbb{R}^d}|u|^{p+1}d\mathbf{x}}.
\end{align}
Since $u(\cdot,t_n)\in \mathcal{S}_{p+1}$, we have $\widetilde{\lambda}(u(\cdot,t_n))=Q(u(\cdot,t_n))$. The asymptotic Lagrange multiplier (\ref{eq:gfalm-minQ-la}) is motivated from (\ref{eq:minQ-lamstar}). In fact in \eqref{eq:gfalm-minQ}, if we take $u(\cdot,t_n)=u_*$ which is the minimizer of \eqref{eq:minQ}, we see that $\widetilde{\lambda}(u_*)=Q(u_*)=\lambda(u_*)=\lambda_*$ is the corresponding eigenvalue as in \eqref{eq:minQ-lamstar}. This implies that the first equation in \eqref{eq:gfalm-minQ} becomes $\partial_t u\big|_{u=u_*}=\big(\frac12\Delta - V-\omega + \Omega L_z+\lambda_*|u_*|^{p-1}\big) u_*=0$ and the normalization factor in \eqref{eq:gfalm-minQ} becomes $\|u(\cdot,t_{n+1}^-)\|_{L^{p+1}}=1$. Thus, the limit equation of \eqref{eq:gfalm-minQ} when approaching the steady state asymptotically matches the Euler-Lagrange equation \eqref{eq:EL-minQ} precisely.

Thanks to the introduction of (\ref{eq:gfalm-minQ-la}) which removes the two aforementioned difficulties, the further temporal discretization for the GFALM \eqref{eq:gfalm-minQ} becomes quite flexible. For simplicity and efficiency, we adopt a backward-forward Euler scheme to discretize the GFALM \eqref{eq:gfalm-minQ} as
\begin{align}\label{eq:gfalmbf-minQ}
\left\{\begin{aligned}
&\frac{\tilde{u}^{n+1}-u^n}{\tau} = \left(\frac12\Delta -\alpha_n\right)\tilde{u}^{n+1} +\left(\alpha_n- V-\omega+ \Omega L_z + \widetilde{\lambda}(u^n)|u^n|^{p-1}\right) u^n, \\
& u^{n+1} = {\tilde{u}^{n+1}}/{\|\tilde{u}^{n+1}\|_{L^{p+1}}}, \quad n\geq0, \quad u^0=u_0\in \mathcal{S}_{p+1},
\end{aligned}\right.
\end{align}
where the parameter $\alpha_n\geq0$ serves as a stabilization factor and it can be appropriately chosen so that the time step can be selected as large as possible. We shall refer  (\ref{eq:gfalmbf-minQ}) as the GFALM-BF scheme for computing the action ground state (\ref{phi_g-def}). Its detailed implementation is outlined in Algorithm~\ref{algorithm-gfalmbf}.

\begin{algorithm}[h!] 
\caption{\bf A GFALM-BF algorithm.}\label{algorithm-gfalmbf}
 Give $u^0=u_0\in\mathcal{S}_{p+1}$, constant $\tau>0$. Set $n=0$.

\While{stopping criteria are not met}{

  Evaluate $\widetilde{\lambda}(u^n)=Q(u^n)$ and select a stabilization factor $\alpha_n$

  Solve the linear elliptic equation for $\tilde{u}^{n+1}$:
  \[ -\frac{\tau}{2}\Delta\tilde{u}^{n+1}+(1+\tau\alpha_n)\tilde{u}^{n+1} = u^n+\tau\left(\alpha_n- V-\omega+ \Omega L_z + \widetilde{\lambda}(u^n)|u^n|^{p-1}\right) u^n \]

  $u^{n+1}=\tilde{u}^{n+1}/\|\tilde{u}^{n+1}\|_{L^{p+1}}$

  $n:=n+1$
}
\end{algorithm}

For the stopping criterion, we can take either the one based on the norm of the residual
\begin{equation}\label{r_err}
r_{err}^{n,\infty}:=\left \| \left(-\frac12\Delta+V+\omega-\Omega L_z-\widetilde{\lambda}(u^n)|u^n|^{p-1}\right)u^n \right \|_\infty   \leq \varepsilon,
\end{equation}
or the energy difference
\begin{equation}\label{energy stop}
\mathcal{E}_{err}^n:= \left | Q(u^{n+1})-Q(u^n) \right| \leq \varepsilon.
\end{equation}
Our numerical experience tells that the energy based stopping criterion is easier to satisfy than the residual one.

It is worthwhile to point out that the scheme \eqref{eq:gfalmbf-minQ} is an implicit but linear scheme. At each time step, one only needs to solve a linear elliptic equation with constant coefficients (see Algorithm~\ref{algorithm-gfalmbf}), which can be done efficiently by an appropriate fast Poisson solver (e.g., the Fast Fourier Transform (FFT)). Moreover, we shall show that the scheme \eqref{eq:gfalmbf-minQ} is \textbf{unconditionally energy-decaying} when the stabilization factor $\alpha_n$ is chosen to be suitably large (stated in the theorem below). To prove the energy-decaying property and to discretize in the spatial direction, by noticing that the standing wave function $\phi(\bx)$ of the RNLS (\ref{model}) decays exponentially fast to zero when $| \bf x | \rightarrow \infty$ due to the trapping potential $V(\bf x)$, we truncate the spatial space $\mathbb{R}^d$ to a bounded domain $U\subset\mathbb{R}^d$, e.g., an interval for $d=1$ and a box domain for $d\geq2$, and impose the homogeneous Dirichlet or periodic boundary condition. In this paper, we consider the periodic boundary condition and apply the standard Fourier pseudospectral discretization \cite{Trefethen} for the spatial discretizations unless specified, where the details are omitted for brevity.
\begin{theorem}\label{thm:gfalmbf-decay}
Let $U$ be a box domain in $\bR^d$. Assume that $V\in L^{\infty}(U)$, $u^n\in H^1(U)\cap L^{\infty}(U)$ and one of the following holds:
\begin{enumerate}[(i)]
    \item $d=1$ and $\alpha_n\geq\frac12\max\left\{0,\mathrm{ess\,sup}_{\mathbf{x}\in U}\left(V(\mathbf{x}) +\omega - \widetilde{\lambda}(u^n)|u^n(\mathbf{x})|^{p-1}\right)\right\}$;
    \item $d\geq2$ and $\alpha_n\geq\frac12\max\left\{0,\mathrm{ess\,sup}_{\mathbf{x}\in U}\left(V(\mathbf{x})+\frac{|\Omega|^2}{2}(x_1^2+x_2^2) +\omega - \widetilde{\lambda}(u^n)|u^n(\mathbf{x})|^{p-1}\right)\right\}$.
\end{enumerate}
Then, the backward-forward Euler scheme \eqref{eq:gfalmbf-minQ} on the spatial domain $U$ with the homogeneous Dirichlet or periodic boundary condition has the unconditionally energy-decaying  property on \eqref{Q def}, i.e., for any $\tau>0$,
\begin{equation}\label{Q decay}
Q(u^{n+1})\leq Q(u^n).
\end{equation}
\end{theorem}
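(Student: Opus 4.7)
The plan is to first establish the intermediate estimate $Q(\tilde u^{n+1})\le \rho^{2}Q(u^n)$ with $\rho:=\|\tilde u^{n+1}\|_{L^{p+1}}$, and then exploit the quadratic scaling of $Q$ together with the $L^{p+1}$-normalization step in \eqref{eq:gfalmbf-minQ} to conclude $Q(u^{n+1})=Q(\tilde u^{n+1})/\rho^{2}\le Q(u^n)$.

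To reach the intermediate estimate, I would test the first equation of \eqref{eq:gfalmbf-minQ} against $\overline{w}$ with $w:=\tilde u^{n+1}-u^n$, integrate over $U$ with the prescribed boundary condition, and take real parts. Integration by parts on the Laplacian combined with the polarization identity $\mathrm{Re}\int a\overline{(a-b)}=\tfrac12(\|a\|_{L^{2}}^{2}-\|b\|_{L^{2}}^{2}+\|a-b\|_{L^{2}}^{2})$ applied to each of the quadratic forms $\tfrac12\|\nabla\cdot\|_{L^{2}}^{2}$, $\int V|\cdot|^{2}$, $\omega\|\cdot\|_{L^{2}}^{2}$, and $L_\Omega(\cdot)$ (the last using that $L_z$ is self-adjoint) bundles the mixed $u^n/\tilde u^{n+1}$ contributions into exactly $-\tfrac12(Q(\tilde u^{n+1})-Q(u^n))$, while the two $\alpha_n$ pieces collapse to $-\alpha_n\|w\|_{L^{2}}^{2}$. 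After rearranging this yields
\begin{equation*}
 Q(\tilde u^{n+1})-Q(u^n)=-\tfrac{2}{\tau}\|w\|_{L^{2}}^{2}+\bigl(Q(w)-\|\nabla w\|_{L^{2}}^{2}\bigr)-2\alpha_n\|w\|_{L^{2}}^{2}+2\widetilde{\lambda}(u^n)\,\mathrm{Re}\!\int_U |u^n|^{p-1}u^n\overline{w}\,d\mathbf{x}.
\end{equation*}

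For $d\ge 2$ I would then apply Lemma~\ref{lem:rotenergy} with $\delta=1$ to bound $L_\Omega(w)\le \tfrac12\|\nabla w\|_{L^{2}}^{2}+\tfrac{|\Omega|^{2}}{2}\int(x_{1}^{2}+x_{2}^{2})|w|^{2}$; this absorbs the negative $-\tfrac12\|\nabla w\|_{L^{2}}^{2}$ hidden inside $Q(w)-\|\nabla w\|_{L^{2}}^{2}$ and reduces that block to $\int\bigl(V+\tfrac{|\Omega|^{2}}{2}(x_{1}^{2}+x_{2}^{2})+\omega\bigr)|w|^{2}$ (an analogous simpler step with only $V+\omega$ handles the $d=1$ case). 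The hypothesis on $\alpha_n$ then produces the pointwise domination $V+\tfrac{|\Omega|^{2}}{2}(x_{1}^{2}+x_{2}^{2})+\omega-2\alpha_n\le \widetilde{\lambda}(u^n)|u^n|^{p-1}$, so the $w$-quadratic block is controlled by $\widetilde{\lambda}(u^n)\int|u^n|^{p-1}|w|^{2}$. Merging with the explicit cross term via $|\tilde u^{n+1}|^{2}-|u^n|^{2}=|w|^{2}+2\mathrm{Re}(u^n\overline{w})$ gives
\begin{equation*}
 Q(\tilde u^{n+1})-Q(u^n)\le -\tfrac{2}{\tau}\|w\|_{L^{2}}^{2}+\widetilde{\lambda}(u^n)\int_U|u^n|^{p-1}\bigl(|\tilde u^{n+1}|^{2}-|u^n|^{2}\bigr)d\mathbf{x}.
\end{equation*}
A Hölder pairing with exponents $\tfrac{p+1}{p-1}$ and $\tfrac{p+1}{2}$ yields $\int|u^n|^{p-1}|\tilde u^{n+1}|^{2}\le \|u^n\|_{L^{p+1}}^{p-1}\|\tilde u^{n+1}\|_{L^{p+1}}^{2}=\rho^{2}$, and since $\widetilde{\lambda}(u^n)=Q(u^n)\ge 0$ on $\mathcal{S}_{p+1}$ (in the well-posed regime $\omega>-\lambda_{0}$ of Theorem~\ref{thm:minQ}) and $\|u^n\|_{L^{p+1}}=1$, one reaches $Q(\tilde u^{n+1})\le \rho^{2}Q(u^n)-\tfrac{2}{\tau}\|w\|_{L^{2}}^{2}\le \rho^{2}Q(u^n)$; dividing by $\rho^{2}$ finishes the proof.

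The main obstacle is the careful bookkeeping so that all the polarization identities cooperate: the leftover $-\tfrac12\|\nabla w\|_{L^{2}}^{2}$ produced by the implicit/explicit splitting of the Laplacian must be precisely what the $\delta=1$ instance of Lemma~\ref{lem:rotenergy} absorbs, and the stabilization bound must simultaneously dominate $V$, the rotational penalty $\tfrac{|\Omega|^{2}}{2}(x_{1}^{2}+x_{2}^{2})$, and the constant $\omega$ by the pointwise value of $\widetilde{\lambda}(u^n)|u^n|^{p-1}$. The Hölder pairing at the end is the other crucial moment, since it is the only way the factor $\rho^{2}$ appears on the right-hand side and is subsequently cancelled by the $L^{p+1}$-normalization step.
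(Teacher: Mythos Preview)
Your proposal is correct and follows essentially the same route as the paper: both test the scheme against $\tilde u^{n+1}-u^n$, expand every quadratic form via the polarization identity, control the rotational remainder by Lemma~\ref{lem:rotenergy} with $\delta=1$, absorb the potential/constant block using the lower bound on $\alpha_n$, and finish with the H\"older pairing $\int|u^n|^{p-1}|\tilde u^{n+1}|^2\le\|\tilde u^{n+1}\|_{L^{p+1}}^2$ followed by division by $\|\tilde u^{n+1}\|_{L^{p+1}}^2$. Your packaging of the leftover terms as $Q(w)-\|\nabla w\|_{L^2}^2$ is a cosmetic regrouping of the same quantities the paper writes out explicitly, and your observation that the final step implicitly requires $\widetilde\lambda(u^n)=Q(u^n)\ge0$ (hence $\omega>-\lambda_0$) is a point the paper uses without comment.
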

\begin{proof}
By taking the $L^2$-inner product of the first equation in \eqref{eq:gfalmbf-minQ} with $-2(\tilde{u}^{n+1}-u^n)$ and then taking the real part, we get
\begin{align*}
-2\left(\alpha_n+\frac1{\tau}\right)\|\tilde{u}^{n+1}-u^n\|_{L^2}^2
&= \frac12\|\nabla {\tilde{u}^{n+1}}\|_{L^2}^2 - \frac12\|\nabla u^n\|_{L^2}^2 + \frac12\|\nabla ({\tilde{u}^{n+1}}-u^n)\|_{L^2}^2 \\
&\quad\; +\int_{U}\left(V+\omega -\widetilde{\lambda}(u^n)|u^n|^{p-1}\right) \left(|{\tilde{u}^{n+1}}|^2 - |u^n|^2 - |{\tilde{u}^{n+1}}-u^n|^2\right) d\mathbf{x} \\
&\quad\; -\Omega\int_{U} \left(\overline{{\tilde{u}^{n+1}}}L_z{\tilde{u}^{n+1}} - \overline{u^n}L_zu^n - \overline{({\tilde{u}^{n+1}}-u^n)}L_z({\tilde{u}^{n+1}}-u^n)\right) d\mathbf{x} \\
&= Q({\tilde{u}^{n+1}})-Q(u^n) -\widetilde{\lambda}(u^n)\left(\int_{U} |u^n|^{p-1}|{\tilde{u}^{n+1}}|^2 d\mathbf{x} - \|u^n\|_{L^{p+1}}^{p+1} \right) \\
&\quad\; -\int_{U}\left(V+\omega -\widetilde{\lambda}(u^n)|u^n|^{p-1}\right) |{\tilde{u}^{n+1}}-u^n|^2 d\mathbf{x} \\
&\quad\; + \frac{1}{2}\|\nabla ({\tilde{u}^{n+1}}-u^n)\|_{L^2}^2+\Omega\int_{U} \overline{({\tilde{u}^{n+1}}-u^n)}L_z({\tilde{u}^{n+1}}-u^n)d\mathbf{x}.
\end{align*}
Note that $u^n\in \mathcal{S}_{p+1}$ and $\widetilde{\lambda}(u^n)=Q(u^n)$. Applying the assumptions on $\alpha_n$ and Lemma~\ref{lem:rotenergy} with the domain $\mathbb{R}^d$ replaced by $U$, we obtain
\[Q({\tilde{u}^{n+1}}) \leq Q(u^n)\int_{U} |u^n|^{p-1}|{\tilde{u}^{n+1}}|^2 d\mathbf{x}-\frac{1}{2}\|\nabla ({\tilde{u}^{n+1}}-u^n)\|_{L^2}^2 -\frac{2}{\tau}\|{\tilde{u}^{n+1}}-u^n\|_{L^2}^2, \]
for case (i), and
\[Q({\tilde{u}^{n+1}}) \leq Q(u^n)\int_{U} |u^n|^{p-1}|{\tilde{u}^{n+1}}|^2 d\mathbf{x} -\frac{2}{\tau}\|{\tilde{u}^{n+1}}-u^n\|_{L^2}^2, \]
for case (ii).
By H\"{o}lder's inequality, we have
\[ \int_{U} |u^n|^{p-1}|{\tilde{u}^{n+1}}|^2 d\mathbf{x}\leq \left(\int_{U} |u^n|^{p+1} d\mathbf{x}\right)^{\frac{p-1}{p+1}}\left(\int_{U} |{\tilde{u}^{n+1}}|^{p+1} d\mathbf{x}\right)^{\frac{2}{p+1}}
=\|{\tilde{u}^{n+1}}\|_{L^{p+1}}^2, \]
and therefore, for both cases (i) and (ii), $Q(u^{n+1})=Q({\tilde{u}^{n+1}})/\|{\tilde{u}^{n+1}}\|_{L^{p+1}}^2 \leq Q(u^n)$.
\end{proof}

\begin{remark}
By the similar analysis, one could also establish the energy-decaying property (\ref{Q decay}) for a semi-implicit discretization without stabilization terms:
\[ \frac{\tilde{u}^{n+1}-u^n}{\tau} = \left(\frac12\Delta - V-\omega+ \Omega L_z\right)\tilde{u}^{n+1} +\widetilde{\lambda}(u^n)|u^n|^{p-1} u^n. \]
%$u^{n+1} = \tilde{u}^{n+1}/\|\tilde{u}^{n+1}\|_{L^{p+1}}$.
However, to implement this scheme, fast solvers such as FFT cannot be directly applied due to the implicit treatment of the rotational and potential terms.
\end{remark}

The energy-decaying property (\ref{Q decay}) makes the proposed GFALM-BF scheme \eqref{eq:gfalmbf-minQ} (i.e., Algorithm~\ref{algorithm-gfalmbf}) mathematically elegant. In practice, Algorithm~\ref{algorithm-gfalmbf} can capture the action ground state very accurately which will be illustrated in Section \ref{sec:4}, while this does not stop us from considering the techniques from mathematical optimization to pursue more efficiency.

\subsection{Preconditioned optimization methods}\label{subsec opt foc}
In this subsection, we consider some popular optimization methods to solve \eqref{eq:minQ}. These methods will be shown later in Section \ref{sec:4} to gain significant computational efficiency in practice, particularly in high dimensions.

The approach is based on the iterative scheme of the following form:
\begin{equation}\label{eq:optimscheme}
 \tilde{u}^{n+1}=u^{n}+\tau_n d_n, \quad n=0,1,\ldots,
\end{equation}
followed by a projection step
\begin{equation}\label{eq:optimscheme-proj}
u^{n+1}=\tilde{u}^{n+1}/\|\tilde{u}^{n+1}\|_{L^{p+1}}.
\end{equation}
Here $d_n\in L^{p+1}$ is a descent direction and $\tau_n>0$ is a step length at the $n$-th approximate state $u^n$. A large class of optimization methods can be designed under the iterative framework \eqref{eq:optimscheme}-\eqref{eq:optimscheme-proj}. Here, we propose two kinds of efficient optimization methods with preconditions. One is the preconditioned Barzilai-Borwein (PBB) method which combines the preconditioned steepest descent (PSD) direction and the BB step length strategy \cite{BB1988IMAJNA}. The other one is the preconditioned conjugate gradient (PCG) method which adopts nonlinear CG directions with preconditioner and an optimal step length search. The practical preconditioners are suggested in the end.

In the subsequent discussions, we denote $g_n:=\big(-\frac12\Delta+V+\omega-\Omega L_z-\widetilde{\lambda}(u^n)|u^n|^{p-1}\big)u^n$ as an asymptotically approximation of the projected $L^2$-gradient (or variational derivative) of the functional $Q(u)$ (\ref{Q def}) at $u^n$. Here we use $\widetilde{\lambda}(u^n)$ from \eqref{eq:gfalm-minQ-la} instead of $\lambda(u^n)$ from \eqref{eq:pgf-minQ-la} for the reasons mentioned above.

\subsubsection{Preconditioned Barzilai-Borwein method}
Consider the PSD direction $d_n=-\mathcal{P}g_n$ in the iterative scheme \eqref{eq:optimscheme}-\eqref{eq:optimscheme-proj}, reading as
\begin{align}\label{eq:psd}
\tilde{u}^{n+1} =u^n-\tau_n\mathcal{P}g_n,\quad
u^{n+1}=\tilde{u}^{n+1}/\|\tilde{u}^{n+1}\|_{L^{p+1}}, \quad
n=0,1,\ldots,
\end{align}
where $\mathcal{P}$ is a symmetric positive-definite preconditioner which will be discussed later. Similar to the steepest descent method in Euclidean spaces, a fixed step length or monotonically decreasing step length search strategies (such as exact line search and Armijo/Goldstein/Wolfe-Powell inexact line search) could be applied for the above PSD method, but its numerical performance usually suffers from the zigzag-like iterative path and the slow convergence of the gradient descent method \cite{Yuan}.

As a special nonmonotone gradient descent method, the BB step length technique \cite{BB1988IMAJNA} is widely used to accelerate gradient-type optimization algorithms. Mimicking the BB gradient method in the optimization theory in Euclidean spaces \cite{BB1988IMAJNA}, we now propose the PBB method for the PSD iteration \eqref{eq:psd}. The idea is to treat the linear operator $\tau_n\mathcal{P}$ as an approximation of the inverse (projected) Hessian at $u^n$ and solve the quasi-Newton secant equation in the least-squares sense to get the step length. {This leads to explicitly choose $\tau_n$ as (cf. \cite{BB1988IMAJNA,Wen})
\begin{align}\label{eq:BBstep}
\tau^{\mathrm{BB}}_n:= \frac{\left|\langle y_{n-1},s_{n-1}\rangle_{L^2}\right|}{\langle y_{n-1},y_{n-1} \rangle_{L^2}}, \quad n\geq1,
\end{align}
where $s_{n-1}:=u^n-u^{n-1}$ and $y_{n-1}:=\mathcal{P}\left(g_n-g_{n-1}\right)$. Here and after, $\langle\cdot,\cdot\rangle_{L^2}$ denotes the $L^2$-inner product.
$\tau^{\mathrm{BB}}_n$ will be referred to as the BB step length.}
% This leads to explicitly choose $\tau_n$ as (cf. \cite{BB1988IMAJNA,Wen})
% \begin{align}\label{eq:BBsteps}
% \tau^{\mathrm{BB1}}_n:= \frac{\left|\langle y_{n-1},s_{n-1}\rangle_{L^2}\right|}{\langle y_{n-1},y_{n-1} \rangle_{L^2}}\quad\mbox{or}\quad
% \tau^{\mathrm{BB2}}_n:= \frac{\langle s_{n-1},s_{n-1} \rangle_{L^2}}{\left|\langle y_{n-1},s_{n-1}\rangle_{L^2}\right|}, \quad n\geq1,
% \end{align}
% where $s_{n-1}:=u^n-u^{n-1}$ and $y_{n-1}:=d_n-d_{n-1}=\mathcal{P}\left(g_n-g_{n-1}\right)$.
% $\tau^{\mathrm{BB1}}_n$ and $\tau^{\mathrm{BB2}}_n$ will be referred to as the BB step lengths.

Here, we present some practical techniques for using the BB step length. First, the BB step length in above is only defined for $n\geq1$ and an initial step $\tau_0>0$ needs to be prescribed. Second, the step length calculated by \eqref{eq:BBstep} may occasionally be too large or too small, so it needs to be truncated to a bounded interval $[\tau_{\min},\tau_{\max}]$ for some constants $0<\tau_{\min}<\tau_{\max}<\infty$. Then a practical framework of the constrained PBB algorithm is outlined in Algorithm~\ref{algorithm1} with the stopping criterion \eqref{r_err} or \eqref{energy stop}.
% the table below.

\begin{algorithm}[h!]
\caption{\bf A constrained PBB algorithm.}\label{algorithm1}
 Give $u^0=u_0\in\mathcal{S}_{p+1}$,  constants $0<\tau_{\min}<\tau_0<\tau_{\max}$. Set $n=0$.

\While{stopping criteria are not met}{

  $g_n=\left(-\frac12\Delta+V+\omega-\Omega L_z-\widetilde{\lambda}(u^n)|u^n|^{p-1}\right)u^n$

  $\tau_n=\begin{cases}
  \tau_0, &\mbox{if $n=0$ or } \langle y_{n-1},s_{n-1}\rangle_{L^2}=0, \\
  \max\{\min\{{\tau^{\mathrm{BB}}_n},\tau_{\max}\},\tau_{\min}\}, & \mbox{otherwise},
 \end{cases}$

  $\tilde{u}^{n+1}=u^n-\tau_{n}\mathcal{P} g_n$

  $u^{n+1}=\tilde{u}^{n+1}/\|\tilde{u}^{n+1}\|_{L^{p+1}}$

  $n:=n+1$
}
\end{algorithm}

% {\color{blue}For the stopping} criterion, we can take either the one based on the norm of the residual
% \begin{equation}\label{r_err}
% r_{err}^{n,\infty}:=\left \| \left(-\frac12\Delta+V+\omega-\Omega L_z-\widetilde{\lambda}(u^n)|u^n|^{p-1}\right)u^n \right \|_\infty   \leq \varepsilon,
% \end{equation}
% or the energy difference
% \begin{equation}\label{energy stop}
% \mathcal{E}_{err}^n:= \left | Q(u^{n+1})-Q(u^n) \right| \leq \varepsilon.
% \end{equation}
% Our numerical experience tells that the energy based stopping criterion converges more rapidly than the residual one.

\subsubsection{Preconditioned conjugate gradient method}
Inspired by nonlinear CG methods in optimization in Euclidean space \cite{Yuan} and the PCG method for the mass-prescribed ground state problem \cite{ALT2017}, we also consider the PCG direction in the iterative scheme \eqref{eq:optimscheme}-\eqref{eq:optimscheme-proj}:
\begin{equation}\label{pcg-direction}
d_n=\begin{cases} -\mathcal{P} g_n, & n=0,\\
-\mathcal{P} g_n+\beta_nd_{n-1},  &  n\geq 1, \end{cases}
\end{equation}
where different formulas for $\beta_n$ can be used. Typically, we set $\beta_n=\max \{\beta_n^{\mathrm{PRP}},0\}$ in \eqref{pcg-direction}, where
\begin{equation}\label{pcg-beta}
 \beta_n^{\mathrm{PRP}}=\frac{ \mathrm{Re} \big\langle g_n-g_{n-1}, \mathcal{P} g_n \big \rangle_{L^2} } { \big \langle g_{n-1}, \mathcal{P} g_{n-1} \big \rangle_{L^2} }
\end{equation}
is a generalization of the Polak-Ribi\`{e}re-Polyak formula \cite{Yuan}. To adaptively determine the optimal step length at each step, we compute $\tau_n>0$ by solving for
 \begin{equation}\label{pcg-brent}
\tau_n^{opt}=\arg \min_{\tau>0} \, Q \left(\frac{u^n+\tau d_n}{\|u^n+\tau d_n\|_{L^{p+1}}}\right).
\end{equation}
In our implementation of \eqref{pcg-brent}, the Brent's method \cite{Brent1,Brent2} which uses only the value of a target function to search the global minimum point within a given interval, is applied. Now, according to \eqref{eq:optimscheme}-\eqref{eq:optimscheme-proj}, (\ref{pcg-direction})-(\ref{pcg-beta}) and (\ref{pcg-brent}), the PCG method is summarized in Algorithm \ref{algorithm2} below with the stopping criterion taken as one of \eqref{r_err}-\eqref{energy stop}.

\begin{algorithm}[h!]
\caption{\bf A constrained PCG method.}\label{algorithm2}
Give $u^0=u_0\in\mathcal{S}_{p+1}$. Set $n=0$.

\While{stopping criteria are not met}{

  $g_n=\left(-\frac12\Delta+V+\omega-\Omega L_z-\widetilde{\lambda}(u^n)|u^n|^{p-1}\right)u^n$

  $d_n=\begin{cases} -\mathcal{P} g_n, & n=0 \\
   -\mathcal{P} g_n+\beta_nd_{n-1},  &  n\geq 1 \end{cases}$ \ with \ $\beta_n=\max \{\beta_n^{\mathrm{PRP}},0\}$

  $ \tau_n^{opt}=\arg \min_{\tau>0} Q \left(\frac{u^n+\tau d_n}{\|u^n+\tau d_n\|_{L^{p+1}}}\right)$

  $\tilde{u}^{n+1}=u^n+\tau_n^{opt} d_n$

  $u^{n+1}=\tilde{u}^{n+1}/\|\tilde{u}^{n+1}\|_{L^{p+1}}$

  $n:=n+1$
}
\end{algorithm}

\begin{remark}
We remark that, as an essentially nonmonotone method, the PBB method (Algorithm~\ref{algorithm1}) can be used in combination with certain nonmonotone convergence criterion to obtain better robustness and performance \cite{Raydan1997SIOPT}. Moreover, under the PCG framework (Algorithm~\ref{algorithm2}), other formulas of $\beta_n$ and/or some inexact search strategies for the step length could be employed to explore more efficient implementations \cite{Yuan}. In addition, the idea of the Riemannian BB method \cite{Iannazzo} and the Riemannian CG method \cite{Danaila2} could also be considered for  \eqref{eq:minQ}, which would further bring promising improvements. These subjects certainly require more systematical efforts and will be addressed in a future work.
\end{remark}

\subsubsection{Preconditioners}\label{preconditioner}
We now introduce specific preconditioners for the presented optimization schemes above to accelerate the convergence. We consider the PSD/PBB iterative scheme \eqref{eq:psd} for the presentation and the case of PCG  is similar. Actually, \eqref{eq:psd} can be reformulated compactly as
\begin{align}\label{iteration_scheme}
u^{n+1} = {B^n} u^n,
\end{align}
where ${B^n}:=\|\tilde{u}^{n+1}\|^{-1}_{L^{p+1}}\big(I-\tau_n \mathcal{P}A(u^n)\big)$ is the iteration operator (or matrix in the fully discretized level), with $I$ the identity operator and $A(u^n):=-\frac{1}{2}\Delta-\Omega L_z+(V+\omega-\widetilde{\lambda}(u^n)|u^n|^{p-1})I$. By analogy with the convergence theory of iterative algorithms for linear systems, the convergence and efficiency of \eqref{iteration_scheme} is expected to be influenced essentially by the spectral radius of {$B^n$}. Note that the normalization factor $\|\tilde{u}^{n+1}\|_{L^{p+1}}$ is an $\mathcal{O}(1)$ term for suitably small $\tau_n$, so we are mainly concerned with the Laplacian $\Delta$ and the potential $V$  in $A(u^n)$. As stated in \cite{ALT2017}, the Laplacian makes the largest eigenvalues of $A(u^n)$ behave as $\mathcal{O}(h^{-2})$ for a spatial mesh size $h$, and the harmonic potential makes the largest eigenvalues behave as $\mathcal{O}(L^{2})$ on the domain $[-L,L]^d$. Therefore, suitable preconditioners should be constructed so that the eigenvalues of the preconditioned operator $\mathcal{P}A(u^n)$ are bounded uniformly, and then the number of iterations would not depend much on the spatial resolution $h$ and the size of the domain $L$. To accomplish this task, here we use a symmetrized combined preconditioner $\mathcal{P}_{C}$, which have been applied to compute the mass-prescribed ground states of rotating Bose-Einstein condensates \cite{ALT2017}. $\mathcal{P}_{C}$ reads as
\begin{equation}\label{P-V-ab}
\mathcal{P}_{C}=\mathcal{P}_{V}^{1/2} \mathcal{P}_{\Delta} \mathcal{P}_{V}^{1/2},
\end{equation}
where $\mathcal{P}_{\Delta}=\left(\alpha_{\Delta}-\frac12\Delta\right)^{-1}$ and $\mathcal{P}_{V}=\big(\alpha_{V}+V\big)^{-1}$, with $\alpha_{\Delta}$ and $\alpha_{V}$ two positive shifting parameters. We numerically found it efficient to take
\begin{align*}
\alpha_{\Delta}=\alpha_{V}=\int \left( \frac{1}{2}|\nabla u_n|^2+\left(V+|\omega| \right) |u_n|^2\right) d\bf x.
\end{align*}
Note that the result of the operator $\mathcal{P}_{V}^{1/2}$ acting on a function $f$ at any point $\bx$ is given by $\mathcal{P}_{V}^{1/2} f(\bx)=f(\bx)/\sqrt{\alpha_{V}+V(\bx)}$, and the action of $\mathcal{P}_{\Delta}$ on a function is to solve a linear equation with constant coefficient which can be done very efficiently by using, e.g., the FFT. We refer to \cite{ALT2017} for more details.

\section{Variational characterization and numerical methods in defocusing case}\label{sec:3}
In this section, we study the defocusing case of (\ref{action}), i.e., $\beta>0$, where the results are very different from the focusing case discussed in Section~\ref{sec:2}. We first prove the existence of a global minimizer for the action functional $S_{\Omega,\omega}$ \eqref{action}  in $X=H^1(\mathbb{R}^d)\cap L_V^2(\mathbb{R}^d)$ under some weak assumptions on the potential $V$ and the rotational speed $\Omega$. Then we show that the action ground state in the defocusing case can be characterized by the direct minimization of $S_{\Omega,\omega}$ in $X$. Based on this theoretical result, the direct gradient flow approach is adopted and analyzed to numerically compute the action ground state. The preconditioned optimization methods are presented in the end as well.

\subsection{Existence and variational characterization via unconstrained minimization}
As far as we know, the action ground state of (\ref{action}) in the defocusing case has not been widely studied in the literature. Here we mention that \cite{Soffer1,Soffer2} proved the existence of a positive standing wave solution in the non-rotating case of (\ref{model0}). Thus, we begin by investigating the existence of the action ground state of (\ref{action}).  The following result states that the global minimizer of \eqref{action} exists and the Nehari constraint can be removed.

\begin{theorem}\label{thm:exist}
Let $\beta>0$, $\omega<-\lambda_0$, $1<p<(d+2)/(d-2)$ for $d\geq3$ and $1<p<\infty$ for $d=1,2$ in \eqref{action}, and let one of the following hold:
\begin{enumerate}[(i)]
  \item $\Omega=0$, $V(\mathbf{x})\geq 0\; (\forall\,\mathbf{x}\in\mathbb{R}^d)$, $\lim_{|\mathbf{x}|\to\infty}V(\mathbf{x}) = \infty$;
  \item $0<|\Omega|<\min\{\gamma_1,\gamma_2\}$, $V(\mathbf{x})=\frac{1}{2}\sum_{j=1}^d \gamma_j^2x_j^2$, $d\geq2$.
\end{enumerate}
Then, there exists a $\phi_g\in X$ such that
\begin{align}\label{defocuse thm}
S_{\Omega,\omega}(\phi_g)=\inf_{\phi\in X} S_{\Omega,\omega}(\phi)=\inf_{\phi\in\mathcal{M}} S_{\Omega,\omega}(\phi),
\end{align}
with $\mathcal{M}$ the Nehari manifold \eqref{NehariManifold}.
\end{theorem}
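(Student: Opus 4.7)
The plan is to apply the direct method of the calculus of variations to $S_{\Omega,\omega}$ on all of $X$ (without the Nehari constraint) and then observe that any such global minimizer automatically sits on $\mathcal{M}$, giving both equalities in \eqref{defocuse thm} simultaneously. Decompose $S_{\Omega,\omega}(\phi)=T(\phi)+\omega\|\phi\|_{L^2}^2+\tfrac{2\beta}{p+1}\|\phi\|_{L^{p+1}}^{p+1}$ with $T(\phi):=\tfrac12\|\nabla\phi\|_{L^2}^2+\int_{\bR^d}V|\phi|^2 d\bx+L_\Omega(\phi)$. For coercivity on $X$: in case (ii), applying Lemma~\ref{lem:rotenergy} with $\delta\in(0,1)$ chosen so that $|\Omega|^2/\delta<\min\{\gamma_1^2,\gamma_2^2\}$ (possible by the hypothesis on $\Omega$) gives $T(\phi)\geq c_1\bigl(\|\nabla\phi\|_{L^2}^2+\int V|\phi|^2 d\bx\bigr)$ for some $c_1>0$; case (i) is immediate since $L_\Omega\equiv 0$. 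The negative term $\omega\|\phi\|_{L^2}^2$ is absorbed by a ball-versus-tail split: on $B_R$ apply H\"{o}lder and then Young's inequality (using $p+1>2$) to bound $\|\phi\|_{L^2(B_R)}^2$ by $\eps\|\phi\|_{L^{p+1}}^{p+1}+C_\eps$, and on $B_R^c$ bound $\|\phi\|_{L^2(B_R^c)}^2$ by $V_R^{-1}\int V|\phi|^2 d\bx$, where $V_R:=\inf_{|\bx|>R}V(\bx)$ satisfies $V_R\to\infty$. Taking $R$ large and $\eps$ small yields $S_{\Omega,\omega}(\phi)\geq c_2\|\phi\|_X^2-C$ for some $c_2,C>0$. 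To see that the infimum is strictly negative, test with $\phi=\rho u_0$ where $u_0$ realizes $\lambda_0$ in \eqref{lambda_0} with $\|u_0\|_{L^2}=1$: the hypothesis $\omega<-\lambda_0$ together with $p+1>2$ makes $S_{\Omega,\omega}(\rho u_0)$ negative for small $\rho>0$.

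Next, a minimizing sequence $\{\phi_n\}\subset X$ is bounded in $X$ by the coercivity above, so $\phi_n\rightharpoonup\phi_g$ weakly in $X$ along a subsequence. By Lemma~\ref{lem:Xembed} (the confining condition on $V$ holds in both (i) and (ii) since the harmonic potential is confining), $\phi_n\to\phi_g$ strongly in $L^q(\bR^d)$ for $q\in[2,2d/(d-2))$, in particular in $L^2$ and $L^{p+1}$. For weak lower semicontinuity of $Q$, I would use the magnetic-gauge identity $Q(\phi)=\tfrac12\int|(\nabla-i\Omega A)\phi|^2 d\bx+\int\bigl(V-\tfrac{\Omega^2}{2}|A|^2\bigr)|\phi|^2 d\bx+\omega\|\phi\|_{L^2}^2$ with $A(\bx)=(x_2,-x_1,0)$; in case~(ii) the effective potential $V-\tfrac{\Omega^2}{2}|A|^2$ remains a non-negative quadratic form by the hypothesis on $\Omega$. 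The first two pieces are weakly LSC as non-negative quadratic forms, the $L^2$ piece converges strongly, and strong $L^{p+1}$ convergence gives $\|\phi_n\|_{L^{p+1}}^{p+1}\to\|\phi_g\|_{L^{p+1}}^{p+1}$. Hence $S_{\Omega,\omega}(\phi_g)\leq\liminf_n S_{\Omega,\omega}(\phi_n)=\inf_X S_{\Omega,\omega}$, and since this infimum is strictly negative while $S_{\Omega,\omega}(0)=0$, we have $\phi_g\not\equiv 0$. As an unconstrained critical point, $\phi_g$ solves $H_\phi(\phi_g)=0$; taking its $L^2$-pairing with $\phi_g$ gives $K_{\Omega,\omega}(\phi_g)=0$, so $\phi_g\in\mathcal{M}$. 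Combined with the trivial $\inf_X S_{\Omega,\omega}\leq\inf_\mathcal{M} S_{\Omega,\omega}$, this proves both equalities in \eqref{defocuse thm}.

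The main obstacle I anticipate is the coercivity step: controlling the negative $\omega\|\phi\|_{L^2}^2$ uniformly by the defocusing $L^{p+1}$ energy together with the coercive part of $T$. The ball-and-tail decomposition with H\"{o}lder and Young's inequalities is the key device, and it exploits the confining behavior of $V$ together with the super-quadratic growth $p+1>2$ in an essential way; without either, the functional would fail to be bounded below. A secondary technical point is the weak lower semicontinuity of $L_\Omega$: rather than estimating $\int\bar\phi L_z\phi\, d\bx$ directly (which would require compactness in a weighted $L^2$ space for the terms $x_j\phi_n$), the magnetic-gauge reformulation packages this into a single non-negative quadratic form whose LSC is automatic, and the condition $|\Omega|<\min\{\gamma_1,\gamma_2\}$ is exactly what keeps the effective potential $V-\tfrac{\Omega^2}{2}|A|^2$ non-negative.
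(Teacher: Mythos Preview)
Your proposal is correct and follows the same direct-method strategy as the paper: minimize $S_{\Omega,\omega}$ over all of $X$, then observe the minimizer is nontrivial and hence lies on $\mathcal{M}$. A few technical differences are worth noting. The paper establishes the a-priori bounds in two separate lemmas (boundedness below in Lemma~\ref{lem:LBS}, boundedness in $X$ of the sublevel set $\{S_{\Omega,\omega}\leq0\}$ in Lemma~\ref{lem:levelset}(d)), while your single coercivity estimate $S_{\Omega,\omega}\geq c_2\|\cdot\|_X^2-C$ delivers both at once; the underlying ball-and-tail/H\"older--Young device is the same. To obtain a strictly negative infimum, the paper first shows $\mathcal{M}\neq\emptyset$ and then uses $S_{\Omega,\omega}|_{\mathcal{M}}=-\frac{p-1}{p+1}\beta\|\cdot\|_{L^{p+1}}^{p+1}<0$, whereas you test directly with $\rho u_0$ for small $\rho$ (the paper works with an \emph{approximate} minimizer of \eqref{lambda_0} rather than assuming $\lambda_0$ is attained, a minor robustness point). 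For weak lower semicontinuity in case~(ii), the paper invokes LSC of the $H^1$ and $L_V^2$ norms separately and is silent on the rotation term $L_\Omega$; your magnetic-gauge identity absorbs $L_\Omega$ into a single non-negative magnetic Dirichlet form plus a non-negative effective potential, making the LSC transparent---this is a genuine gain in clarity over the paper's presentation. Both routes reach the same conclusion with comparable effort.
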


The proof of Theorem~\ref{thm:exist} will be done with the help of the following lemmas. We first establish the lower bound of the action functional $S_{\Omega,\omega}$.
\begin{lemma}\label{lem:LBS}
Under assumptions in Theorem~\ref{thm:exist}, the action functional $S_{\Omega,\omega}$ \eqref{action} is bounded from below, i.e., $\inf_{\phi\in X} S_{\Omega,\omega}(\phi)>-\infty$.
\end{lemma}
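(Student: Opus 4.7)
The plan is to exploit the positivity of the nonlinear term (since $\beta>0$) to dominate the potentially negative $\omega\|\phi\|_{L^2}^2$ contribution, using the confining growth of $V$ at infinity to localize the delicate comparison to a bounded region where $L^2$ and $L^{p+1}$ norms can be traded against one another.

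First I would absorb the rotation. In case (i) this step is vacuous. In case (ii), Lemma~\ref{lem:rotenergy} applied with $\delta\in(0,1)$ chosen so that $|\Omega|^2/\delta\le\min\{\gamma_1^2,\gamma_2^2\}$ (possible since $|\Omega|<\min\{\gamma_1,\gamma_2\}$) absorbs $L_\Omega(\phi)$ into a proper fraction of the kinetic and harmonic potential energies. In both cases this produces a constant $c_0>0$ such that
\begin{equation*}
S_{\Omega,\omega}(\phi)\;\ge\; c_0\Big(\|\nabla\phi\|_{L^2}^2+\int_{\bR^d}V|\phi|^2\,d\mathbf{x}\Big)+\omega\|\phi\|_{L^2}^2+\frac{2\beta}{p+1}\|\phi\|_{L^{p+1}}^{p+1}.
\end{equation*}

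Next I would localize the indefinite $\omega$ contribution. Since $V(\mathbf{x})\to\infty$ as $|\mathbf{x}|\to\infty$ in either scenario, I pick $R>0$ large enough that $c_0V(\mathbf{x})\ge 2|\omega|$ for $|\mathbf{x}|>R$. Outside $B_R$ the pointwise combination $(c_0V+\omega)|\phi|^2\ge|\omega||\phi|^2\ge 0$ is harmless, so only the interior integral $|\omega|\int_{B_R}|\phi|^2\,d\mathbf{x}$ remains to be controlled. On the finite-volume set $B_R$, H\"older's inequality yields
\begin{equation*}
\int_{B_R}|\phi|^2\,d\mathbf{x}\;\le\; |B_R|^{(p-1)/(p+1)}\|\phi\|_{L^{p+1}(B_R)}^2,
\end{equation*}
and Young's inequality with conjugate exponents $(p+1)/2$ and $(p+1)/(p-1)$ converts this into $\varepsilon\|\phi\|_{L^{p+1}}^{p+1}+C(\varepsilon,R,\omega,p)$ for an arbitrary $\varepsilon>0$. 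Selecting $\varepsilon<2\beta/(p+1)$ lets the positive nonlinear term absorb this piece, leaving $S_{\Omega,\omega}(\phi)\ge -C$ uniformly in $\phi\in X$.

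The main obstacle is precisely this last step. Because $\omega+\lambda_0<0$, the quadratic part of the action is not coercive on $X$, and on the unbounded domain $\bR^d$ the $L^2$ and $L^{p+1}$ norms cannot be directly compared; a plain Young-type absorption of $\omega\|\phi\|_{L^2}^2$ by $\|\phi\|_{L^{p+1}}^{p+1}$ on all of $\bR^d$ would produce an infinite constant. The growth of $V$ at infinity is what makes the trick work, by rendering the exterior contribution non-negative for free and restricting the genuinely troublesome piece to a bounded ball where the finite volume $|B_R|$ enables the H\"older--Young argument above. The hypotheses on $V$ (confinement in case (i), harmonic potential in case (ii)) and the smallness condition on $\Omega$ are used exactly for this localization.
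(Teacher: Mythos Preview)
Your proof is correct and follows essentially the same route as the paper: absorb the rotation into the quadratic part via Lemma~\ref{lem:rotenergy}, use the confinement of $V$ to make the contribution outside a large ball non-negative, and then on the bounded ball compare the $L^2$ and $L^{p+1}$ masses through H\"older. The only cosmetic difference is that the paper, after H\"older, bounds $\int_{B_R}|\phi|^{p+1}$ below by $C\big(\int_{B_R}|\phi|^2\big)^{(p+1)/2}$ and directly minimizes the scalar function $t\mapsto\omega t+\beta C\,t^{(p+1)/2}$, whereas you push H\"older the other way and invoke Young's inequality to absorb into the $L^{p+1}$ term; these are two phrasings of the same estimate. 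One minor point: in case~(ii) you should take the strict inequality $|\Omega|^2/\delta<\min\{\gamma_1^2,\gamma_2^2\}$ (which is possible since $|\Omega|<\min\{\gamma_1,\gamma_2\}$) so that the modified potential remains genuinely confining and yields a positive fraction $c_0$ of $\int V|\phi|^2$.
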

\begin{proof}
Let us start with case (i). Since $\lim_{|\mathbf{x}|\to\infty}V(\mathbf{x}) = \infty$, there exists a sufficiently large $R>0$ such that $V(\mathbf{x})+\omega>0$ when $|\mathbf{x}|>R$. It follows from $\beta>0$ and $V(\mathbf{x})\geq0$ that
\begin{align*}
S_{\Omega,\omega}(\phi)
&\geq \omega\int_{|\mathbf{x}|\leq R} |\phi|^2 d\mathbf{x} + \frac{2\beta}{p+1} \int_{|\mathbf{x}|\leq R}|\phi|^{p+1} d\mathbf{x}.
\end{align*}
By H\"{o}lder's inequality, there exists a constant $C>0$ depending only on $d,p,R$ such that
\begin{align*}
S_{\Omega,\omega}(\phi)
&\geq \omega\int_{|\mathbf{x}|\leq R} |\phi|^2 d\mathbf{x} + \beta C \left(\int_{|\mathbf{x}|\leq R} |\phi|^2 d\mathbf{x}\right)^{\frac{p+1}{2}}.
\end{align*}
Therefore, $$\inf_{\phi\in X}S_{\Omega,\omega}(\phi) \geq \min_{t\geq0} \big\{\omega t + \beta C t^{\frac{p+1}{2}} \big\} > -\infty,$$ which is the assertion for case (i).

For case (ii), we have from Lemma~\ref{lem:rotenergy} that $S_{\Omega,\omega}(\phi)\geq \int_{\mathbb{R}^d} \big((V_{\Omega}+\omega)|\phi|^2 + \frac{2\beta}{p+1}|\phi|^{p+1} \big) d\mathbf{x}$, where $V_{\Omega}(\mathbf{x}):=\frac12\sum_{j=1}^d\gamma_j^2x_j^2-\frac{|\Omega|^2}{2}(x_1^2+x_2^2)$. It is observed that $V_{\Omega}(\mathbf{x})\geq0$ and $\lim_{|\mathbf{x}|\to\infty}V_{\Omega}(\mathbf{x}) = \infty$ since $|\Omega|<\min\{\gamma_1,\gamma_2\}$. The proof is completed by utilizing the same argument for case (i).
\end{proof}

Define the sublevel set
\begin{align}
\mathcal{S}_{\leq0} := \left\{ \phi\in X: S_{\Omega,\omega}(\phi)\leq0 \right\},
\end{align}
and we have the following result.
\begin{lemma}\label{lem:levelset}
Under assumptions in Theorem~\ref{thm:exist}, the following things hold:
\begin{enumerate}[{\rm(a)}]
  \item $\mathcal{M}$ is nonempty;
  \item $\mathcal{M}\subset\mathcal{S}_{\leq0}$;
  \item $\inf_{\phi\in\mathcal{M}} S_{\Omega,\omega}(\phi)<0$;
  \item $\mathcal{S}_{\leq0}$ is uniformly bounded in $X$.
\end{enumerate}
\end{lemma}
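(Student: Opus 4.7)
I would reduce assertions (a)--(c) to the algebraic identity
\begin{equation*}
S_{\Omega,\omega}(\phi) - K_{\Omega,\omega}(\phi) = -\beta\tfrac{p-1}{p+1}\|\phi\|_{L^{p+1}}^{p+1}, \quad \forall\,\phi\in X,
\end{equation*}
and spend the main effort on the coercivity bound (d). For (b), on $\mathcal{M}$ one has $K_{\Omega,\omega}(\phi)=0$, so the identity gives $S_{\Omega,\omega}(\phi) = -\beta\frac{p-1}{p+1}\|\phi\|_{L^{p+1}}^{p+1}\leq 0$ (strict for $\phi\neq 0$), placing $\mathcal{M}\subset \mathcal{S}_{\leq 0}$. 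For (a), since $\omega+\lambda_0<0$, by definition of $\lambda_0$ I can pick $u_0\in X$ with $\|u_0\|_{L^2}=1$ so that the functional $Q(u_0)<0$. Scaling by $\rho>0$ turns $K_{\Omega,\omega}$ into the scalar map
\begin{equation*}
f(\rho) := K_{\Omega,\omega}(\rho u_0) = \rho^2 Q(u_0) + \beta \rho^{p+1}\|u_0\|_{L^{p+1}}^{p+1},
\end{equation*}
which is negative near $\rho = 0^+$ and positive for large $\rho$ (because $\beta>0$ and $p+1>2$); any positive root $\rho_0$ puts $\phi_0 := \rho_0 u_0\in\mathcal{M}$. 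Claim (c) then follows by applying (b) to $\phi_0\neq 0$.

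For (d), the guiding idea is that the defocusing exponent $p+1>2$ beats the negative part of the quadratic form in $S_{\Omega,\omega}$. In case (ii) I would first absorb the rotational term: applying Lemma~\ref{lem:rotenergy} with $\delta=1-\eta$ for a small $\eta>0$ gives
\begin{equation*}
S_{\Omega,\omega}(\phi) \geq \tfrac{\eta}{2}\|\nabla\phi\|_{L^2}^2 + \int_{\bR^d}\bigl(\widetilde V + \omega\bigr)|\phi|^2\,d\mathbf{x} + \tfrac{2\beta}{p+1}\|\phi\|_{L^{p+1}}^{p+1},
\end{equation*}
where $\widetilde V(\mathbf{x}) := V(\mathbf{x}) - \frac{|\Omega|^2}{2(1-\eta)}(x_1^2+x_2^2)$. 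The strict inequality $|\Omega|<\min\{\gamma_1,\gamma_2\}$ allows $\eta$ small enough so that $\widetilde V\geq 0$, $\widetilde V(\mathbf{x})\to\infty$, and $\widetilde V\geq c(x_1^2+x_2^2)$ at infinity; this reduces case (ii) to the structure of case (i) with $V$ replaced by $\widetilde V$ and the kinetic coefficient $\tfrac12$ replaced by $\tfrac\eta2$. Under that reduced form, I would choose $R>0$ so large that $V+\omega\geq 1$ on $\{|\mathbf{x}|\geq R\}$ and split the $L^2$ integral into the inner and outer regions. H\"older on the bounded inner region gives $\int_{|\mathbf{x}|<R}|\phi|^2\,d\mathbf{x}\leq C_R\|\phi\|_{L^{p+1}}^2$. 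Setting $A:=\|\phi\|_{L^{p+1}}^{p+1}$, the assumption $S_{\Omega,\omega}(\phi)\leq 0$ then produces an inequality of the form $cA \leq C A^{2/(p+1)}$, which forces $A$ bounded because $p+1>2$. Back-substitution bounds $\|\nabla\phi\|_{L^2}^2$ and $\int_{|\mathbf{x}|\geq R}(V+\omega)|\phi|^2\,d\mathbf{x}$; combined with $V+\omega\geq 1$ on that region and the local H\"older bound, this controls $\|\phi\|_{L^2}^2$ and $\int V|\phi|^2\,d\mathbf{x}$ in turn.

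The hardest step is (d) in case (ii): I need to absorb $L_\Omega$ into the kinetic term while simultaneously leaving behind a shifted potential that is still nonnegative and confining. The hypothesis $|\Omega|<\min\{\gamma_1,\gamma_2\}$ is exactly what enables this two-sided choice of $\eta$; once the reduction is in place, the coercivity argument for case (i), whose only real content is the $p+1>2$ versus $2$ scaling mismatch exploited through H\"older, carries over unchanged.
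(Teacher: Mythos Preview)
Your proposal is correct and follows essentially the same route as the paper. Parts (a)--(c) are identical to the paper's argument, and for (d) you perform the same reduction of case (ii) to case (i) via Lemma~\ref{lem:rotenergy} and the same inner/outer splitting at a radius where $V+\omega\geq1$; the only cosmetic difference is that you isolate $A=\|\phi\|_{L^{p+1}}^{p+1}$ first and bound the inner $L^2$ mass by $C_R A^{2/(p+1)}$, whereas the paper isolates $t=\int_{|\mathbf{x}|\leq R}|\phi|^2$ first and bounds the inner $L^{p+1}$ mass from below by $C_1 t^{(p+1)/2}$---the same H\"older inequality read in the other direction.
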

\begin{proof}
Since $\omega<-\lambda_0$, the definition of $\lambda_0$ \eqref{lambda_0} implies that for $\varepsilon_0:=-(\lambda_0+\omega)/2>0$, there exists $u_0$ with $\|u_0\|_{L^2}=1$ such that $\int_{\mathbb{R}^d}\left(\frac12|\nabla u_0|^2+V|u_0|^2-\Omega\overline{u}_0L_zu_0\right)d\mathbf{x}<\lambda_0+\varepsilon_0$. Then, for $\forall\rho>0$,
\[ K_{\Omega,\omega}(\rho u_0)<(\lambda_0+\varepsilon_0+\omega)\rho^2+\beta\rho^{p+1}\|u_0\|_{L^{p+1}}^{p+1}=-\varepsilon_0\rho^2+\beta\rho^{p+1}\|u_0\|_{L^{p+1}}^{p+1}. \]
Clearly, $K_{\Omega,\omega}(\rho u_0)<-\varepsilon_0\rho^2/2<0$ for all sufficiently small $\rho>0$. On the other hand, we have
\[ K_{\Omega,\omega}(\rho u_0)\geq (\lambda_0+\omega)\rho^2+\beta\rho^{p+1}\|u_0\|_{L^{p+1}}^{p+1}\to+\infty,\quad \mbox{as}\quad \rho\to+\infty. \]
By the continuity of $K_{\Omega,\omega}(\rho u_0)$ with respect to $\rho$, there exists a $\rho_*>0$ such that $K_{\Omega,\omega}(\rho_*u_0)=0$ and $\rho_*u_0\in\mathcal{M}$. As a result, $\mathcal{M}$ is nonempty and (a) is obtained.
Moreover, for any $\phi\in\mathcal{M}$, we have $S_{\Omega,\omega}(\phi)=S_{\Omega,\omega}(\phi)-K_{\Omega,\omega}(\phi) =-\frac{p-1}{p+1}\beta\|\phi\|_{L^{p+1}}^{p+1} < 0$, which leads to (b) and (c).

The rest is to verify (d).
Firstly, we consider case (i). Note that $\Omega=0$, $V(\mathbf{x})\geq 0$ ($\forall\,\mathbf{x}\in\mathbb{R}^d$) and {$V(\mathbf{x})+\omega\geq1$} ($\forall\,|\mathbf{x}|>R_1$) for a suitably large constant $R_1>0$. For any $\phi\in\mathcal{S}_{\leq0}$, we have
\begin{align*}
0\geq S_{\Omega,\omega}(\phi)
&\geq \int_{\mathbb{R}^d} \left(\big(V+\omega\big)|\phi|^2 +\frac{2\beta}{p+1} |\phi|^{p+1} \right) d\mathbf{x} \\
&\geq \int_{|\mathbf{x}|>R_1} |\phi|^2 d\mathbf{x} + \int_{|\mathbf{x}|\leq R_1} \left(\omega|\phi|^2 +\frac{2\beta}{p+1} |\phi|^{p+1}\right) d\mathbf{x}.
\end{align*}
By H\"{o}lder's inequality, there exists a constant $C_1>0$ depending only on $d,p,R_1$ such that
\begin{align*}
0\geq \int_{|\mathbf{x}|\leq R_1} \left(\omega|\phi|^2 +\frac{2\beta}{p+1} |\phi|^{p+1}\right) d\mathbf{x}
&\geq \omega\int_{|\mathbf{x}|\leq R_1} |\phi|^2 d\mathbf{x} + \beta C_1 \left(\int_{|\mathbf{x}|\leq R_1} |\phi|^2 d\mathbf{x}\right)^{\frac{p+1}{2}} \geq s_0,
\end{align*}
with $s_0:=\min_{t\geq0} \big\{ \omega t + \beta C_1 t^{\frac{p+1}{2}} \big\}>-\infty$. This implies that
\[ \int_{|\mathbf{x}|\leq R_1} |\phi|^2 d\mathbf{x} \leq \left(\frac{-\omega}{\beta\,C_1}\right)^{\frac{2}{p-1}}, \]
and
\[
\int_{|\mathbf{x}|>R_1} |\phi|^2 d\mathbf{x}
\leq S_{\Omega,\omega}(\phi)-\int_{|\mathbf{x}|\leq R_1} \left(\omega|\phi|^2 +\frac{2\beta}{p+1} |\phi|^{p+1}\right) d\mathbf{x}
\leq -s_0.
\]
Therefore, $\|\phi\|_{L^2}$ is uniformly bounded. Furthermore, from $S_{\Omega,\omega}(\phi)\leq0$ with $\Omega=0$ and $\beta>0$, we can establish the uniform bound for $\|\nabla\phi\|_{L^2}$ and $\|\phi\|_{L_V^2}$. Therefore, we have the uniform bound for $\|\phi\|_X$. The arbitrariness of $\phi\in\mathcal{S}_{\leq0}$ yields that $\mathcal{S}_{\leq0}$ is uniformly bounded in $X$ for case (i).

Let us now consider case (ii). Noting that $|\Omega|<\min\{\gamma_1,\gamma_2\}$, applying Lemma~\ref{lem:rotenergy} with a constant $\delta$ satisfying $\big(\frac{|\Omega|}{\min\{\gamma_1,\gamma_2\}}\big)^2<\delta<1$, we see that every $\phi\in\mathcal{S}_{\leq0}$ satisfies
\begin{align*}
0\geq S_{\Omega,\omega}(\phi)
&\geq \int_{\mathbb{R}^d} \left(\frac{1-\delta}{2}|\nabla\phi|^2 + \big(V_{\Omega,\delta}+\omega\big)|\phi|^2 + \frac{2\beta}{p+1}|\phi|^{p+1} \right) d\mathbf{x},
\end{align*}
where $V_{\Omega,\delta}(\mathbf{x})=\frac12\sum_{j=1}^d\gamma_j^2x_j^2-\frac{|\Omega|^2(x_1^2+x_2^2)}{2\delta}$. It is observed that $V_{\Omega,\delta}(\mathbf{x})\geq0$ and $\lim_{|\mathbf{x}|\to\infty}V_{\Omega,\delta}(\mathbf{x})=\infty$, and then the assertion can be proved by the same manner for case (i).
\end{proof}

With the above preparations, we now apply the minimizing sequence method to prove Theorem~\ref{thm:exist}.

\begin{proof}[Proof of Theorem~\ref{thm:exist}]
According to Lemma~\ref{lem:LBS}, we have $c:=\inf_{\phi\in X} S_{\Omega,\omega}(\phi) >-\infty$. By Lemma~\ref{lem:levelset}, $\mathcal{M}$ is nonempty and $c\leq \inf_{\phi\in\mathcal{M}} S_{\Omega,\omega}(\phi)<0$. Noting that $S_{\Omega,\omega}(0)=0>c$ and $\mathcal{M}$ contains all nontrivial critical points of $S_{\Omega,\omega}$, so if the infimum $c=\inf_{\phi\in X} S_{\Omega,\omega}(\phi)$ is attained at some $\phi_g\in X$, then $\phi_g\in\mathcal{M}$ and $S_{\Omega,\omega}(\phi_g)=\inf_{\phi\in\mathcal{M}} S_{\Omega,\omega}(\phi)$. Hence, we need only to verify the existence of an unconstrained global minimizer $\phi_g$ such that $S_{\Omega,\omega}(\phi_g)=c$.

By Lemma~\ref{lem:LBS}-\ref{lem:levelset}, we can take a sequence $\{\phi^n\}\subset\mathcal{S}_{\leq0}$ minimizing $S_{\Omega,\omega}$, i.e.,
\begin{align}\label{eq:exist-slimit}
 \lim_{n\to\infty} S_{\Omega,\omega}(\phi^n) = c,
\end{align}
and it is uniformly bounded in $X=H^1(\mathbb{R}^d)\cap L_V^2(\mathbb{R}^d)$. Then there exists a subsequence (still denoted as $\{\phi^n\}$ for simplicity) in $X$ weakly converging to some $\phi^{\infty}\in X$. Lemma~\ref{lem:Xembed} leads to
\begin{align}\label{eq:exist-ulimitLp}
\phi^n \to \phi^{\infty}\quad \mbox{strongly in}\;\; L^2(\mathbb{R}^d)\cap L^{p+1}(\mathbb{R}^d).
\end{align}
On the other hand, by the weak lower-semicontinuity of the $H^1$ and $L_V^2$ norms, we have
\begin{align}\label{eq:exist-wlscX}
\liminf_{n\to\infty} \|\phi^n\|^2_{H^1} \geq \|\phi^{\infty}\|^2_{H^1} \quad\mbox{and}\quad
\liminf_{n\to\infty} \|\phi^n\|^2_{L_V^2} \geq \|\phi^{\infty}\|^2_{L_V^2}.
\end{align}
Combining \eqref{eq:exist-slimit}-\eqref{eq:exist-wlscX}, we obtain
\begin{align*}
c=\lim_{n\to\infty} S_{\Omega,\omega}(\phi^n) =\liminf_{n\to\infty} S_{\Omega,\omega}(\phi^n) \geq S_{\Omega,\omega}(\phi^{\infty}),
\end{align*}
which means that $S_{\Omega,\omega}(\phi^{\infty})=c$ and $\phi^{\infty}$ is an unconstrained global minimizer of $S_{\Omega,\omega}$ in $X$.
\end{proof}

\subsection{Gradient flow and its temporal discretization}
Theorem~\ref{thm:exist} indeed states that the action ground state in the defocusing case can be obtained by minimizing the action functional $S_{\Omega,\omega}$ in the whole space of $X$, without using the Nehari manifold (\ref{NehariManifold}). Thus, the problem is simplified to an unconstrained minimization of (\ref{action}),  which can be done by a direct gradient flow:
\begin{align}\label{eq:defocusing-gf}
\partial_t\phi=-\frac{\delta S_{\Omega,\omega}(\phi)}{\delta\overline{\phi}} = \frac12\Delta\phi-V\phi-\beta|\phi|^{p-1}\phi +\Omega L_z\phi-\omega \phi,\quad t\geq0,
\end{align}
starting with an initial guess $\phi(\cdot,0)=\phi_0\in X\backslash\{0\}$. It is clear that the gradient flow \eqref{eq:defocusing-gf} is action-diminishing:
\begin{align}
\frac{d}{dt}S_{\Omega,\omega}(\phi)= 2\mathrm{Re}\int_{\mathbb{R}^d}\frac{\delta S_{\Omega,\omega}(\phi)}{\delta\overline{\phi}}\partial_t\overline{\phi}\,d\mathbf{x}=-2\left\|\frac{\delta S_{\Omega,\omega}(\phi)}{\delta\overline{\phi}}\right\|_{L^2}^2,\quad\forall t\geq0.
\end{align}

Various discretization techniques can be applied to the gradient flow \eqref{eq:defocusing-gf}. Here, we discretize the gradient flow \eqref{eq:defocusing-gf} by the following \emph{backward-forward Euler scheme with stabilization term}:
\begin{align}\label{eq:defocusing-gfbf}
\left\{\begin{aligned}
&\frac{\phi^{n+1}-\phi^n}{\tau}= \frac12\Delta\phi^{n+1}-\alpha_n\phi^{n+1}+\left(\alpha_n-V-\omega-\beta|\phi^n|^{p-1} +\Omega L_z\right)\phi^n,\quad n\geq0, \\
&\phi^0=\phi_0\in X\backslash\{0\},
\end{aligned}\right.
\end{align}
with $\tau>0$ the time step and $\alpha_n\geq0$ the stabilization factor. Obviously in \eqref{eq:defocusing-gfbf}, only a linear elliptic equation with constant coefficient needs to be solved at each time step. Thus, the scheme \eqref{eq:defocusing-gfbf} is very efficient if a fast Poisson solver is available. This is the case when one solves the problem on a bounded computational domain $U\subset\mathbb{R}^d$ with the homogeneous Dirichlet or periodic boundary condition. We shall refer to (\ref{eq:defocusing-gfbf}) as the GF-BF scheme for computing the action ground state. Its implementation is outlined in Algorithm~\ref{algorithm-gfbf}.

\begin{algorithm}[!htp] 
\caption{\bf A GF-BF algorithm.}\label{algorithm-gfbf}

Give $\phi^0=\phi_0\in X\backslash\{0\}$, constant $\tau>0$. Set $n=0$.

\While{stopping criteria are not met}{

  Select a stabilization factor $\alpha_n$

  Solve the linear elliptic equation for $\phi^{n+1}$:
  \[ -\frac{\tau}{2}\Delta\phi^{n+1}+(1+\tau\alpha_n)\phi^{n+1}=\phi^n+\tau\left(\alpha_n-V-\omega-\beta|\phi^n|^{p-1} +\Omega L_z\right)\phi^n \]

  $n:=n+1$

}
\end{algorithm}

Due to the explicit treatment of the nonlinear and rotational terms, it is difficult to establish the exact action-decaying property for the scheme \eqref{eq:defocusing-gfbf}, particularly in the whole space $\bR^d$.  Instead, we can prove \eqref{eq:defocusing-gfbf} on the bounded domain $U$, is unconditionally stable with respect to the \emph{modified action functional}
\begin{align}\label{eq:modaction}
\widetilde{S}_{\Omega,\omega}^n(\varphi):=\int_{U}\left(\frac{1}{2}|\nabla\varphi|^2
+\big(V+\omega+\beta|\phi^n|^{p-1}\big)|\varphi|^2
-\Omega\overline{\varphi}L_z\varphi\right)d\mathbf{x}.
\end{align}
\begin{theorem}\label{lem:gfbf-action-decay}
Let $U$ be a box domain, $\beta>0$ and $V\in L^{\infty}(U)$. If $\phi^n\in H^1(U)\cap L^{\infty}(U)$ and one of the following holds:
\begin{enumerate}[(i)]
  \item $d=1$ and $\alpha_n\geq\frac12\max\big\{0,\mathrm{ess\,sup}_{\mathbf{x}\in U}\big(V(\mathbf{x}) +\omega +\beta|\phi^n(\mathbf{x})|^{p-1}\big)\big\}$;
    \item $d\geq2$ and $\alpha_n\geq\frac12\max\big\{0,\mathrm{ess\,sup}_{\mathbf{x}\in U}\big(V(\mathbf{x})+\frac{|\Omega|^2}{2}(x_1^2+x_2^2) +\omega +\beta|\phi^n(\mathbf{x})|^{p-1}\big)\big\}$;
\end{enumerate}
then the scheme \eqref{eq:defocusing-gfbf} on the spatial domain $U$ with the homogeneous Dirichlet or periodic boundary condition has the following action-decaying property: for any $\tau>0$,
\[ \widetilde{S}_{\Omega,\omega}^n(\phi^{n+1})\leq \widetilde{S}_{\Omega,\omega}^n(\phi^n). \]
\end{theorem}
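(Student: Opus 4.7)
The plan is to follow the energy-estimate strategy of Theorem~\ref{thm:gfalmbf-decay} nearly verbatim, exploiting the key structural observation that the modified action $\widetilde{S}_{\Omega,\omega}^n$ in \eqref{eq:modaction} is a \emph{quadratic} form in $\varphi$: it has exactly the same shape as $Q$ in \eqref{Q def}, only with $V+\omega$ replaced by the frozen effective potential $V+\omega+\beta|\phi^n|^{p-1}$. This reduction is what the authors engineered by freezing the nonlinearity at time $t_n$, and it means essentially the same polarization arguments that worked for $Q$ will work here.

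First I would take the $L^2(U)$ inner product of the first equation in \eqref{eq:defocusing-gfbf} with $2(\overline{\phi^{n+1}}-\overline{\phi^n})$ and extract the real part. Processing the Laplacian via integration by parts (valid under either periodic or homogeneous Dirichlet boundary conditions) and applying the identity
\[ 2\mathrm{Re}\int_U \overline{\varphi_1}(\varphi_1-\varphi_2)\,d\mathbf{x}=\|\varphi_1\|_{L^2}^2-\|\varphi_2\|_{L^2}^2+\|\varphi_1-\varphi_2\|_{L^2}^2 \]
to the gradient pairing, to the effective-potential pairing (with the real weight $V+\omega+\beta|\phi^n|^{p-1}$), and, via the self-adjointness of $L_z$, to the rotational pairing in the form
\[ 2\mathrm{Re}\int_U \overline{(\phi^{n+1}-\phi^n)}L_z\phi^n\,d\mathbf{x} = \int_U \overline{\phi^{n+1}}L_z\phi^{n+1} - \int_U\overline{\phi^n}L_z\phi^n - \int_U\overline{(\phi^{n+1}-\phi^n)}L_z(\phi^{n+1}-\phi^n), \]
the three quadratic contributions reassemble into $-(\widetilde{S}_{\Omega,\omega}^n(\phi^{n+1})-\widetilde{S}_{\Omega,\omega}^n(\phi^n))$ plus residual pieces. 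Handling the stabilization pair $-\alpha_n\phi^{n+1}+\alpha_n\phi^n$ by the same identity (the two cross terms telescope) produces a clean $-2\alpha_n\|\phi^{n+1}-\phi^n\|_{L^2}^2$ of the correct sign. The net identity is
\[ \widetilde{S}_{\Omega,\omega}^n(\phi^{n+1})-\widetilde{S}_{\Omega,\omega}^n(\phi^n) = -\frac{2}{\tau}\|\phi^{n+1}-\phi^n\|_{L^2}^2 - 2\alpha_n\|\phi^{n+1}-\phi^n\|_{L^2}^2 - \frac12\|\nabla(\phi^{n+1}-\phi^n)\|_{L^2}^2 + R_n, \]
where $R_n=\int_U(V+\omega+\beta|\phi^n|^{p-1})|\phi^{n+1}-\phi^n|^2\,d\mathbf{x} + L_\Omega(\phi^{n+1}-\phi^n)$.

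In case (i), $d=1$, the rotational term is absent, so $R_n$ reduces to the potential correction, which the hypothesis $2\alpha_n\geq V+\omega+\beta|\phi^n|^{p-1}$ (a.e.) absorbs into $-2\alpha_n\|\phi^{n+1}-\phi^n\|_{L^2}^2$, while $-\tfrac12\|\nabla(\phi^{n+1}-\phi^n)\|_{L^2}^2\leq 0$ can simply be discarded. In case (ii), $d\geq 2$, I would apply Lemma~\ref{lem:rotenergy} with $\delta=1$, replacing $\mathbb{R}^d$ by $U$ (this is legitimate because Lemma~\ref{lem:rotenergy} is a pointwise Young inequality integrated over the domain), to bound
\[ |L_\Omega(\phi^{n+1}-\phi^n)| \leq \frac12\|\nabla(\phi^{n+1}-\phi^n)\|_{L^2}^2 + \frac{|\Omega|^2}{2}\int_U(x_1^2+x_2^2)|\phi^{n+1}-\phi^n|^2\,d\mathbf{x}. \]
The gradient piece is then exactly cancelled by $-\tfrac12\|\nabla(\phi^{n+1}-\phi^n)\|_{L^2}^2$, and the combined effective-potential weight $V+\omega+\beta|\phi^n|^{p-1}+\tfrac{|\Omega|^2}{2}(x_1^2+x_2^2)$ is in turn absorbed by $-2\alpha_n\|\phi^{n+1}-\phi^n\|_{L^2}^2$ under the assumption on $\alpha_n$. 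In both cases the right-hand side is $\leq 0$ for any $\tau>0$, yielding the unconditional decay $\widetilde{S}_{\Omega,\omega}^n(\phi^{n+1})\leq \widetilde{S}_{\Omega,\omega}^n(\phi^n)$.

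The main bookkeeping obstacle is ensuring that the coefficient of $\|\nabla(\phi^{n+1}-\phi^n)\|_{L^2}^2$ produced by the gradient polarization identity is exactly $\tfrac12$, since this is precisely what is needed to match the gradient bound in Lemma~\ref{lem:rotenergy} with $\delta=1$: a smaller coefficient would not absorb the rotational cross term, while a larger one would force a strengthened assumption on $\alpha_n$. Verifying that the stabilization terms telescope cleanly into $-2\alpha_n\|\phi^{n+1}-\phi^n\|_{L^2}^2$ (rather than leaving a boundary correction on $\|\phi^{n+1}\|_{L^2}^2-\|\phi^n\|_{L^2}^2$) is the other place where signs need to be tracked carefully. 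Once these two balances are confirmed, the conclusion follows unconditionally in $\tau$.
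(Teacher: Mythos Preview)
Your proposal is correct and follows essentially the same approach as the paper's own proof: take the $L^2$-inner product of the scheme with (a multiple of) the increment $\phi^{n+1}-\phi^n$, use the polarization identities to extract $\widetilde{S}_{\Omega,\omega}^n(\phi^{n+1})-\widetilde{S}_{\Omega,\omega}^n(\phi^n)$ plus residual quadratic terms in the increment, then in case (ii) apply Lemma~\ref{lem:rotenergy} with $\delta=1$ so the $\tfrac12\|\nabla(\phi^{n+1}-\phi^n)\|_{L^2}^2$ terms cancel and the stabilization absorbs the rest. Your explicit tracking of the telescoping of the $\alpha_n$-terms and of the $\tfrac12$ gradient coefficient is exactly the bookkeeping the paper carries out.
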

\begin{proof}
Taking the $L^2$-inner product of \eqref{eq:defocusing-gfbf} with {$-2(\phi^{n+1}-\phi^n)$} and then taking the real part, we get
\begin{align*}
-2\left(\alpha_n+\frac{1}{\tau}\right)\|\phi^{n+1}-\phi^n\|_{L^2}^2
&=\frac12\|\nabla\phi^{n+1}\|_{L^2}^2-\frac12\|\nabla\phi^n\|_{L^2}^2
+\frac12\|\nabla(\phi^{n+1}-\phi^n)\|_{L^2}^2 \\
&\quad +\int_U \big(V+\omega+\beta|\phi^n|^{p-1}\big) \left(|\phi^{n+1}|^2-|\phi^n|^2-|\phi^{n+1}-\phi^n|^2\right) d\mathbf{x} \\
&\quad -\Omega\int_U\left( \overline{\phi^{n+1}}L_z\phi^{n+1}-\overline{\phi^n}L_z\phi^n-\overline{(\phi^{n+1}-\phi^n)}
L_z(\phi^{n+1}-\phi^n)\right) d\mathbf{x} \\
&= \widetilde{S}_{\Omega,\omega}^n(\phi^{n+1})- \widetilde{S}_{\Omega,\omega}^n(\phi^n) -\int_U \big(V+\omega+\beta|\phi^n|^{p-1}\big)|\phi^{n+1}-\phi^n|^2 d\mathbf{x} \\
&\quad +\frac12\|\nabla(\phi^{n+1}-\phi^n)\|_{L^2}^2 +\Omega\int_U\overline{(\phi^{n+1}-\phi^n)}L_z(\phi^{n+1}-\phi^n)d\mathbf{x}.
\end{align*}
Then for case (i), the rotational term vanishes in the above and the assumption on $\alpha_n$ leads to
\[ \widetilde{S}_{\Omega,\omega}^n(\phi^{n+1})- \widetilde{S}_{\Omega,\omega}^n(\phi^n)\leq -\frac12\|\nabla(\phi^{n+1}-\phi^n)\|_{L^2}^2-\frac{2}{\tau}\|\phi^{n+1}-\phi^n\|_{L^2}^2, \]
which shows the assertion. For case (ii), applying Lemma~\ref{lem:rotenergy} with the domain $\mathbb{R}^d$ replaced by $U$ and the assumption on $\alpha_n$, we have
\[ \widetilde{S}_{\Omega,\omega}^n(\phi^{n+1})- \widetilde{S}_{\Omega,\omega}^n(\phi^n)\leq -\frac{2}{\tau}\|\phi^{n+1}-\phi^n\|_{L^2}^2. \]
The proof is completed.
\end{proof}

\subsection{Preconditioned optimization methods}\label{sec3 opt}
Thanks to Theorem ~\ref{thm:exist}, the unconstrained optimization methods can also be applied for the minimization problem
\begin{equation}\label{minimization_model_unconstraint}
\phi_g=\arg \min_{\phi\in X} S_{\Omega,\omega}(\phi).
\end{equation}
These methods will be similar but formally simpler than those presented in Section \ref{subsec opt foc}, and they are very efficient in practical computing. In fact, they are more needed here than for the focusing case, owning to the more complex patterns (vortices) in the ground state solutions in the defocusing case. These will be illustrated later in Section \ref{sec:4}.

The optimization methods here will be based on the iterative scheme of the form
\begin{equation}\label{eq:optimscheme_unconstraint}
 \phi^{n+1}=\phi^{n}+\tau_n d_n, \quad n=0,1,\ldots,
\end{equation}
with $d_n\in X$ a descent direction and $\tau_n>0$ a step length at the $n$-th approximate state $\phi_n$.

The scheme of the unconstrained PBB method is very similar to that of the constrained PBB method proposed in Section \ref{subsec opt foc}, i.e.,
\begin{align}\label{eq:psd_unconstraint}
\phi^{n+1} =\phi^{n}+\tau_n d_n=\phi^n-\tau_n\mathcal{P}g_n,\quad n=0,1,\ldots,
\end{align}
but now $g_n:=\left(-\frac12\Delta+V+\beta|\phi^n|^{p-1}-\Omega L_z+\omega\right)\phi^n$ is the $L^2$-gradient of $S_{\Omega,\omega}$ at $\phi^n$ and no normalization step is required here. Here the preconditioner $\mathcal{P}$ is similarly chosen as
\begin{equation}\label{P-V-ab-defocu}
\mathcal{P}=\mathcal{P}_{V}^{1/2} \mathcal{P}_{\Delta} \mathcal{P}_{V}^{1/2} \;\,\mbox{with}\;\,
\mathcal{P}_{\Delta}=\left(\alpha_{\Delta}-\frac{1}{2}\Delta\right)^{-1} \;\,\mbox{and}\;\,
\mathcal{P}_{V}=\left(\alpha_{V}+V+\beta|\phi^n|^{p-1} \right)^{-1}.
\end{equation}
Based on our numerical experience, a simple choice $\alpha_{\Delta}=\alpha_{V}=(h^{-2}+L^d)/2$ is suggested in practice, with $h$ the mesh size of the computation domain $\left[-L,L \right]^d$.
{Denoting $s_{n-1}:=\phi^n-\phi^{n-1}$ and $y_{n-1}:=\mathcal{P}\left(g_n-g_{n-1}\right)$, the BB step length reads as
\begin{align}
\tau^{\mathrm{BB}}_n:= \frac{\left|\langle y_{n-1},s_{n-1}\rangle_{L^2}\right|}{\langle y_{n-1},y_{n-1} \rangle_{L^2}},\quad n\geq1.
\end{align}}
% Denoting $s_{n-1}:=\phi^n-\phi^{n-1}$ and $y_{n-1}:=d_n-d_{n-1}=\mathcal{P}\left(g_n-g_{n-1}\right)$, the BB step lengths read as
% \begin{align}\label{eq:BBsteps_unconstraint}
% \tau^{\mathrm{BB1}}_n:= \frac{\left|\langle y_{n-1},s_{n-1}\rangle_{L^2}\right|}{\langle y_{n-1},y_{n-1} \rangle_{L^2}}\quad\mbox{and}\quad
% \tau^{\mathrm{BB2}}_n:= \frac{\langle s_{n-1},s_{n-1} \rangle_{L^2}}{\left|\langle y_{n-1},s_{n-1}\rangle_{L^2}\right|},\quad n\geq1.
% \end{align}
The practical unconstrained PBB algorithm framework is outlined in Algorithm \ref{algorithm3}.
\begin{algorithm}[!htp]
\caption{\bf A unconstrained PBB algorithm.}\label{algorithm3}

Give $\phi^0=\phi_0$, constants $0<\tau_{\min}<\tau_0<\tau_{\max}$. Set $n=0$.

\While{stopping criteria are not met}{

  $g_n=\left(-\frac12\Delta+V+\beta|\phi^n|^{p-1}-\Omega L_z+\omega\right)\phi^n$

  $\tau_n=\begin{cases}
   \tau_0, &\mbox{if $n=0$ or }\langle y_{n-1},s_{n-1}\rangle_{L^2}=0, \\
   \max\{\min\{{\tau^{\mathrm{BB}}_n}, \tau_{\max}\},\tau_{\min}\}, & \mbox{otherwise},
   \end{cases}$

  $\phi^{n+1}=\phi^n-\tau_{n}\mathcal{P} g_n$

  $n:=n+1$

}
\end{algorithm}

The PCG method for the unconstrained problem \eqref{minimization_model_unconstraint} can also be proposed by taking in \eqref{eq:optimscheme_unconstraint}:
\begin{equation}\label{pcg_direction_unconstraint}
d_n=\begin{cases} -\mathcal{P} g_n, & n=0,\\
-\mathcal{P} g_n+\beta_nd_{n-1},  & n\geq 1,
\end{cases}
\end{equation}
with $\mathcal{P}$ defined in \eqref{P-V-ab-defocu} and $\beta_n$ given by the Polak-Ribi\`{e}re-Polyak formula \cite{Yuan}
\begin{equation}\label{pcg-beta_unconstraint}
 \beta_n=\max\{\beta_n^{\mathrm{PRP}},0\},\quad
 \beta_n^{\mathrm{PRP}}=\frac{ \mathrm{Re} \big\langle g_n-g_{n-1}, \mathcal{P} g_n \big \rangle_{L^2} } { \big \langle g_{n-1}, \mathcal{P} g_{n-1} \big \rangle_{L^2} }.
\end{equation}
And, the optimal step size $$\tau_n^{opt}=\arg \min_{\tau>0} S_{\Omega,\omega} \left(\phi^n+\tau d_n\right)$$
can be efficiently obtained again by Brent's method \cite{Brent1,Brent2}. Now, according to \eqref{eq:optimscheme_unconstraint}, \eqref{pcg_direction_unconstraint}-\eqref{pcg-beta_unconstraint}, the unconstrained PCG method is summarized in Algorithm \ref{algorithm4}.
\begin{algorithm}[!htp]
\caption{\bf A unconstrained PCG method.}\label{algorithm4}
Give $\phi^0=\phi_0$. Set $n=0$.

\While{stopping criteria are not met}{

$g_n=\left(-\frac12\Delta+V+\beta|\phi^n|^{p-1}-\Omega L_z+\omega\right)\phi^n$

$d_n=\begin{cases} -\mathcal{P} g_n, & n=0,\\
-\mathcal{P} g_n+\beta_nd_{n-1},  & n\geq 1,
\end{cases}$ \ with \ $\beta_n=\max\{\beta_n^{\mathrm{PRP}},0\}$

$\tau_n^{opt}=\arg \min_{\tau>0} S_{\Omega,\omega} \left(\phi^n+\tau d_n\right)$

$\phi^{n+1}=\phi^n+\tau_{n}^{opt}  d_n$

$n:=n+1$
}
\end{algorithm}

The stopping criterion in Algorithms~\ref{algorithm3} and \ref{algorithm4} can be taken as one of the following:
\begin{subequations}\label{residual defoc}
\begin{align}
&r_{err}^{n,\infty}:=\left\|\left(-\frac12\Delta+V+\beta|\phi^n|^{p-1}-\Omega L_z+\omega\right)\phi^n\right\|_\infty   \leq \varepsilon,\\
&\mathcal{E}_{err}^n:= \left | S_{\Omega,\omega}(\phi^{n+1})-S_{\Omega,\omega}(\phi^n) \right| \leq \varepsilon.\end{align}
\end{subequations}

\section{Numerical experiments}\label{sec:4}
In this section, we carry out numerical experiments to test the performance of the proposed methods and explore the features of the action ground states. We shall present separately for the focusing case and the defocusing case.

\subsection{Focusing case}\label{sec:4-1}
We begin with the focusing case of the RNLS (\ref{model}) and compute the action ground state of (\ref{action}) by techniques introduced in Section \ref{sec:2}. We consider two examples in the following to illustrate the accuracy and efficiency of the proposed methods.

\begin{example}\label{example-1}
Firstly, we test the gradient flow approach and verify our theoretical findings in Section \ref{sec:2}. To do so,  we take $d=1$, $V=0$, $\beta=-1$ and $p=3$ in \eqref{model0} which gives
\begin{align}\label{eq:cubicNLS-1d}
	\frac12\partial_{xx}\phi(x)+|\phi(x)|^2\phi(x)=\omega\phi(x),\quad x\in\mathbb{R}.
\end{align}
The unique positive ground state solution (up to a translation) of \eqref{eq:cubicNLS-1d} is available   explicitly:
\begin{align}\label{eq:sech}
	\phi_g(x)=\sqrt{2\omega}\,\mathrm{sech}\big(\sqrt{2\omega}x\big),\quad x\in\mathbb{R}.
\end{align}
The corresponding value of the action functional is given by $S(\phi_g):=\frac12\|\phi_g\|^4_{L^4}=\frac{4\sqrt{2}}{3}\omega^{3/2}$. We fix $\omega=1$ here. Based on Theorem \ref{thm:minQ}, the equivalent constrained minimization problem now reads
\begin{align}\label{eq:cubicNLS-1d-minQ}
	\mbox{minimize}\quad  Q(u)=\int_{\mathbb{R}}\left(\frac12|\partial_{x}u(x)|^2+\omega|u(x)|^2\right)dx \quad\mbox{subject to}\quad \|u\|_{L^4}^4=1,
\end{align}
and by the GFALM-BF scheme \eqref{eq:gfalmbf-minQ} (i.e., Algorithm \ref{algorithm-gfalmbf}) we compute numerically the ground state of
\eqref{eq:cubicNLS-1d}.
\end{example}

\begin{figure}[!htp]
\psfig{figure=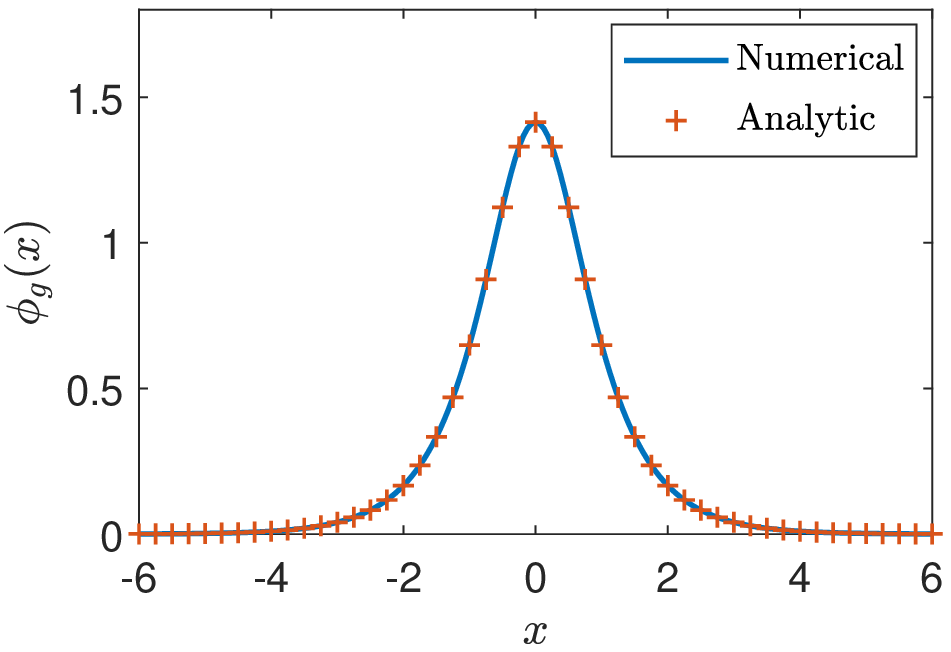,width=2.9in,height=2.in,angle=0}
\psfig{figure=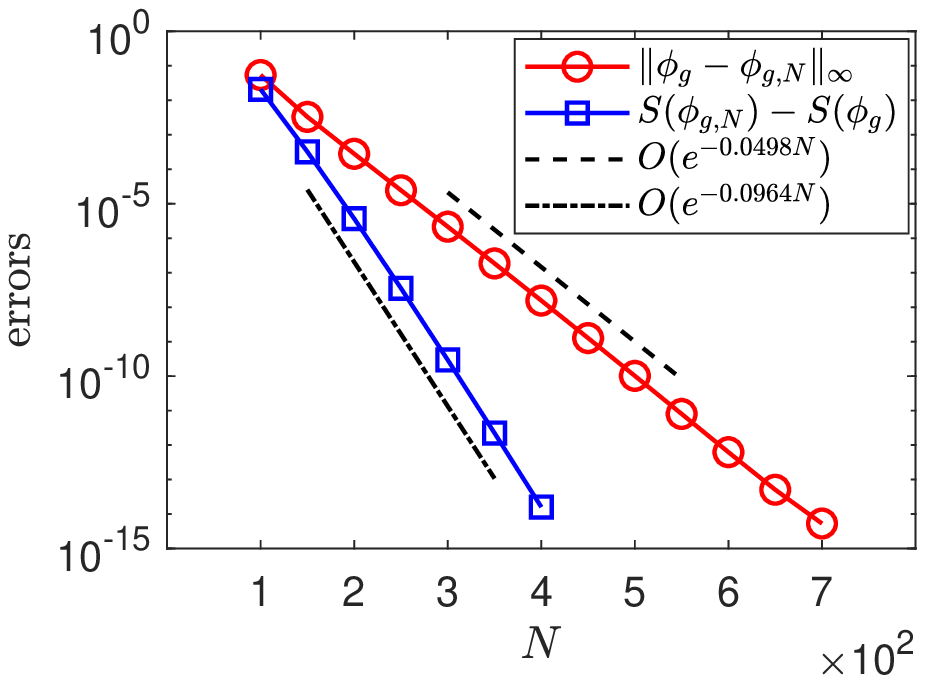,width=2.9in,height=2.in,angle=0}
  \caption{Accuracy of GFALM-BF in Example~\ref{example-1}: profiles of the numerical solution $\phi_{g,N}$ with $N=2^{10}$ and the analytic solution $\phi_g$ (left); error of the numerical solution and the action functional with respect to $N$ (right).}
  \label{fig:sech-omega1}
\end{figure}
\begin{figure}[!htp]
  \centering
\psfig{figure=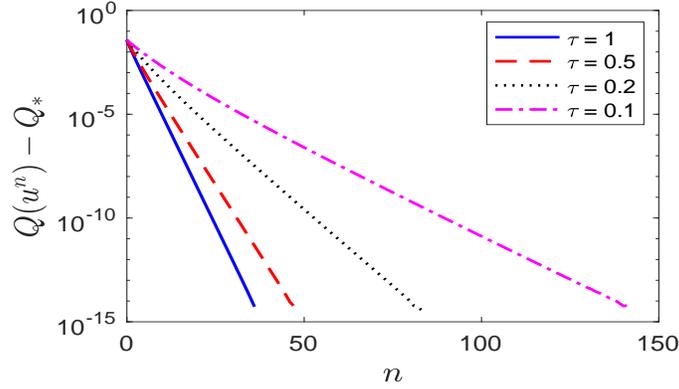,width=3.5in,height=2.in,angle=0}
  \\
  \caption{The change of $Q(u^n)-Q_*$ (in logarithmic scale) with respect to the number of iterations $n$ in GFALM-BF under different $\tau$ in Example~\ref{example-1}.}
  \label{fig:sech-omega1-Qdecay}
\end{figure}

We implement Algorithm \ref{algorithm-gfalmbf} on the computational domain $U=(-32,32)$ with the spatial Fourier pseudospectral discretization \cite{BaoCai,Trefethen} with $N$ discrete Fourier modes. The stabilization factor is chosen as $\alpha_n=\frac12\max_{0\leq j\leq N-1}\{0,\omega-Q(u^n)|u^n(x_j)|^2\}+1$ (by Theorem \ref{thm:gfalmbf-decay}), where $x_j=-32+jh$, $j=0,\ldots,N$, with $h=64/N$. The time step length is taken by default as $\tau=1$, and the initial data is chosen as $u_0(x)=(\pi/2)^{-1/8}\fe^{-x^2/2}$. We stop the iteration in Algorithm \ref{algorithm-gfalmbf} and adopt $u^n$ as the solution of \eqref{eq:cubicNLS-1d-minQ} if the maximal residual of the Euler-Lagrange equation of \eqref{eq:cubicNLS-1d-minQ} is less than $10^{-14}$, i.e.,
\begin{align*}
	\max_{0\leq j\leq N-1}\left|-\frac12 \partial_{xx}u^n(x_j)+\omega u^n(x_j)-Q(u^n)|u^n(x_j)|^2u^n(x_j)\right|<10^{-14}.
\end{align*}
Then by (\ref{Q relation}), the numerical ground state to \eqref{eq:cubicNLS-1d} is computed as $\phi_{g,N}(x):=\sqrt{Q(u^n)}\, u^n(x)$.

In Fig.~\ref{fig:sech-omega1}, we plot the profiles of the analytic solution (\ref{eq:sech}) and the numerical solution $\phi_{g,N}$ with $N=2^{10}$ in the left subplot. In the right subplot, we show the error of the numerical ground state  $\|\phi_g-\phi_{g,N}\|_{\infty}:=\max_{0\leq j\leq N}|\phi_g(x_j)-\phi_{g,N}(x_j)|$ and the error of the action functional $S(\phi_{g,N})-S(\phi_g)$ with respect to $N$. From the results, we can clearly observe the effectiveness of the GFALM-BF method in Algorithm \ref{algorithm-gfalmbf} and the spectral accuracy of the spatial discretization.

Then, we consider the evolution of the quadratic energy $Q$ defined in (\ref{Q def}) under the GFALM-BF scheme \eqref{eq:gfalmbf-minQ}. The difference $Q(u^n)-Q_*$ is plotted under several time steps in Fig.~\ref{fig:sech-omega1-Qdecay}, where $Q_*=Q(u_*)=\sqrt{8\omega\sqrt{2\omega}/3}$ and $u_*=\phi_g/\|\phi_g\|_{L^4}$. The result clearly illustrates the decay of  the quadratic energy in GFALM-BF, which verifies our theoretical result in Theorem~\ref{thm:gfalmbf-decay}.

\begin{example}\label{example-2}
Next, we illustrate the efficiency of the proposed methods by considering a two-dimensional example. We take $d=2$, $p=3$, $\beta=-1$, $\omega=1$ and $V(\bx)=\frac{1}{2}(\gamma_1^2x_1^2+\gamma_2^2x_2^2)$ with $\gamma_1=\gamma_2=1$ in (\ref{model}). The action ground state (\ref{phi_g-def}) will be computed by the GFALM-BF scheme and by the two optimization methods PBB and PCG in Section~\ref{sec:2}. We compare their efficiency from the normal to fast rotating regime by taking $\Omega=0.3,0.5,0.7,0.8,0.9$ (with $|\Omega|<\min\{\gamma_1,\gamma_2\}$ satisfied).
\end{example}

We fix the computation domain $U=(-4,4)^2$ with mesh size $h=1/16$. We solve the problem by the three proposed methods in Section \ref{sec:2}, i.e., the GFALM-BF in Algorithm \ref{algorithm-gfalmbf} with $\tau=0.1$, the PBB in Algorithm \ref{algorithm1} and the PCG in Algorithm \ref{algorithm2} with the preconditioner (\ref{P-V-ab}). The stopping criterion $\mathcal{E}_{err}^n \leq 10^{-14}$ (\ref{energy stop}) is applied for all methods. In this example, we do not have the exact formula of the action ground state. Also, note that the proposed methods may end up at local minima which depend on the choice of initial data. Inspired by \cite{ALT2017,BaoCai,Danaila1,Wen}, we shall consider the following six types of functions
\begin{subequations}
\begin{align}
&(a)\ \phi_a(\mathbf{x})=\sqrt{\gamma_1\gamma_2/\pi}\, \fe^{-\left(\gamma_1 x_1^2+\gamma_2 x_2^2\right)/2},\quad\quad\ \, (b)\  \phi_b(\mathbf{x})=(x_1+i x_2) \phi_a(\mathbf{x}),\label{initial-T1}\\
&(c)\  \phi_c(\mathbf{x})=(x_1+i x_2)^4 \phi_a(\mathbf{x}),\quad\quad\quad\quad\quad\quad (d)\ \phi_d(\mathbf{x})=\left( \phi_a(\mathbf{x}) +\phi_b(\mathbf{x}) \right)/2, \label{initial-T2} \\
&(e)\ \phi_e(\mathbf{x})=(1-\Omega) \phi_a(\mathbf{x}) +\Omega\phi_b(\mathbf{x}),\quad\quad\quad\, (f)\ \phi_f(\mathbf{x})=\Omega \phi_a(\mathbf{x}) +(1-\Omega)\phi_b(\mathbf{x}), \label{initial-T3}
\end{align}
\end{subequations}
and use their $L^{p+1}$-normalizations as the initial data for computing the action ground states.

\begin{figure}[!htp]
\centerline{
\psfig{figure=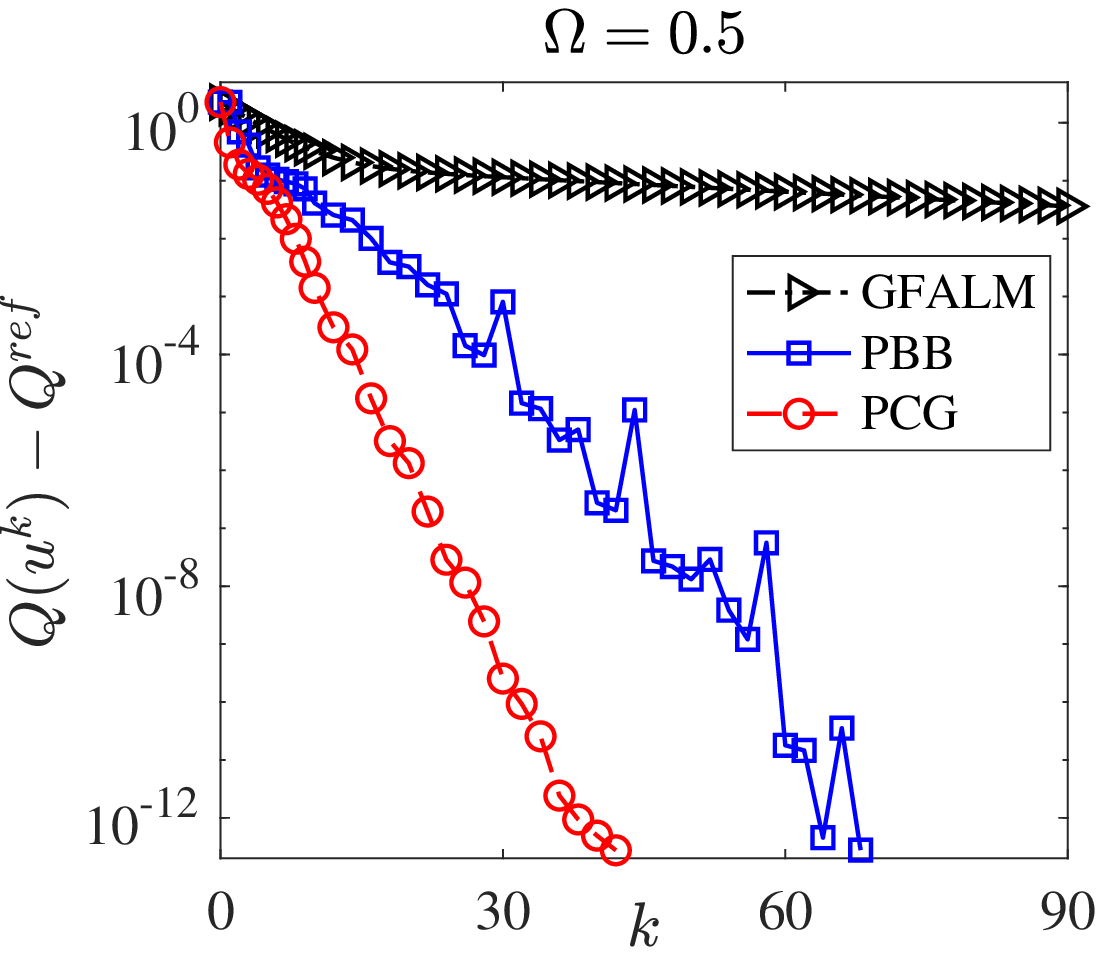,width=2.8in,height=2.in,angle=0}\quad
\psfig{figure=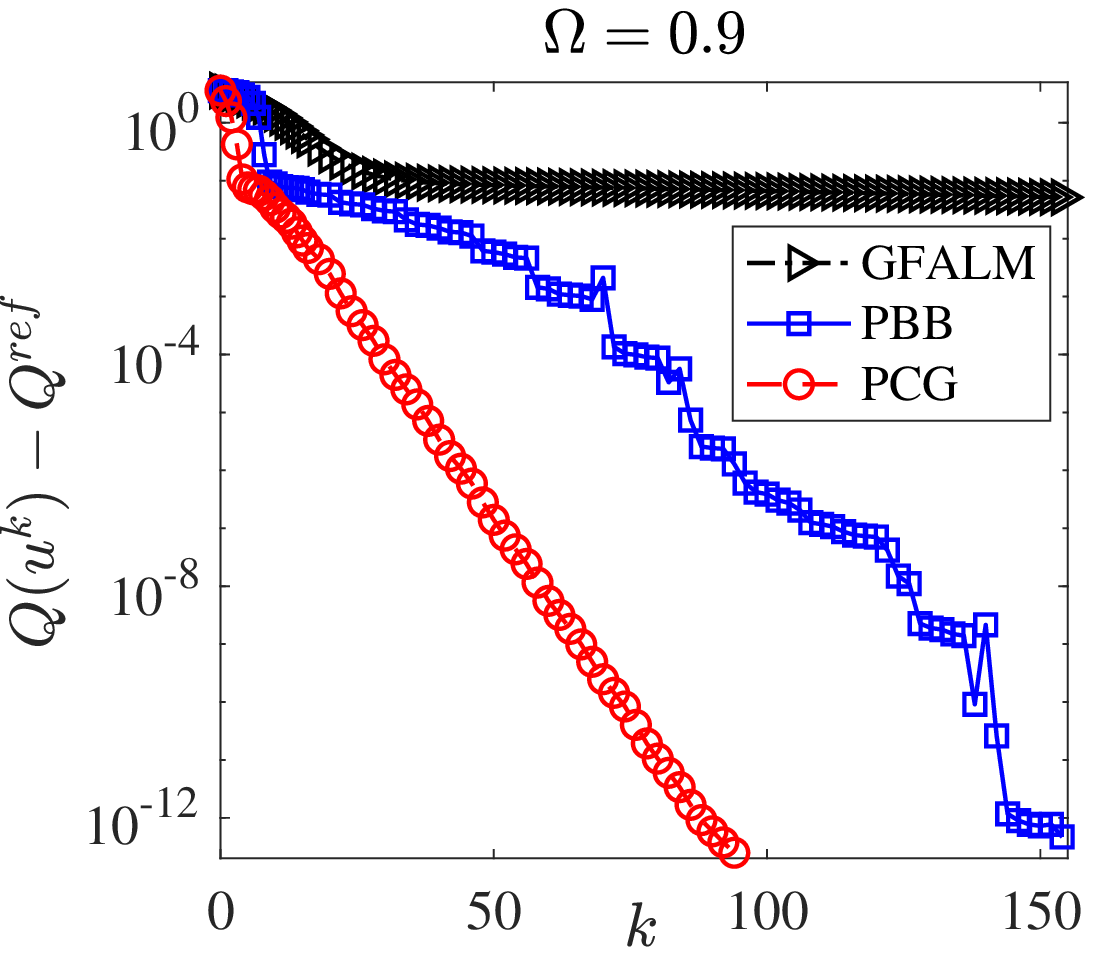,width=2.8in,height=2.in,angle=0}
}
\caption{The change of $Q(u^k)-Q^{ref}$ (in logarithmic scale) with respect to the number of iterations $k$ under $\Omega=0.5$ (left) and $\Omega=0.9$ (right) in Example \ref{example-2}, where $Q^{ref}=3.753300166998$ is the reference value of $Q(u_*)$ for both cases.}\label{fig_Qens}
\end{figure}
\begin{figure}[!ht]
\centerline{
\psfig{figure=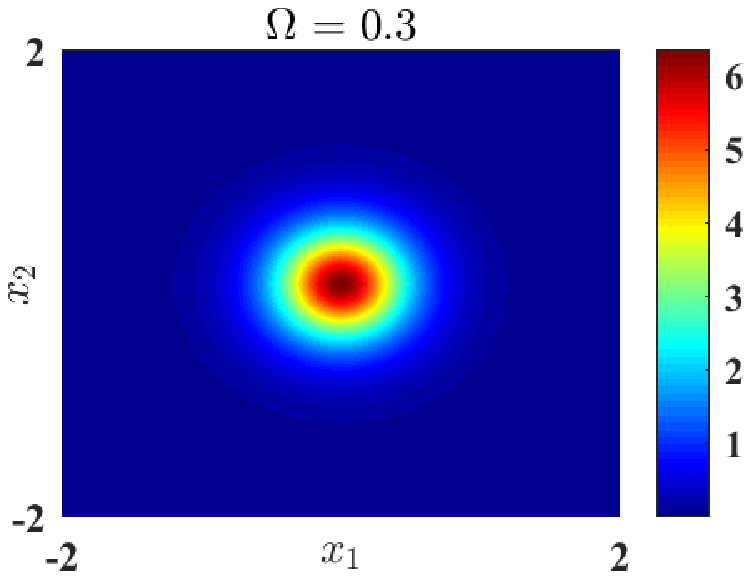,width=1.9in,height=1.7in,angle=0}
\psfig{figure=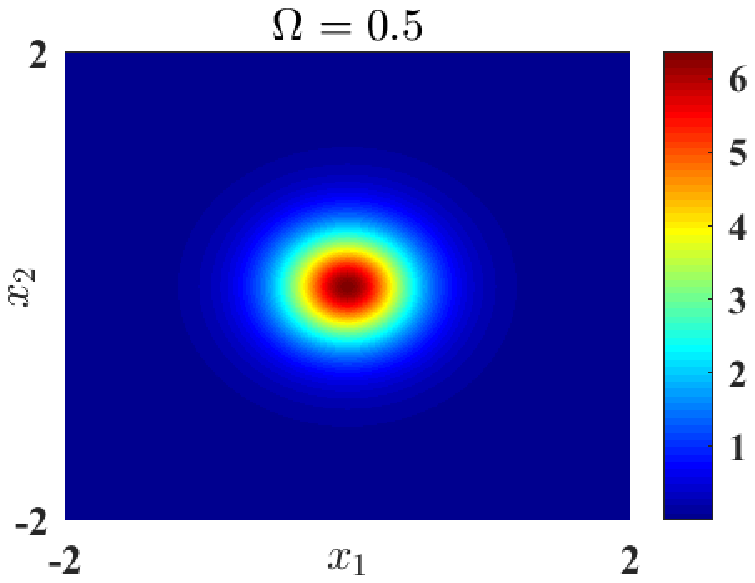,width=1.9in,height=1.7in,angle=0}
\psfig{figure=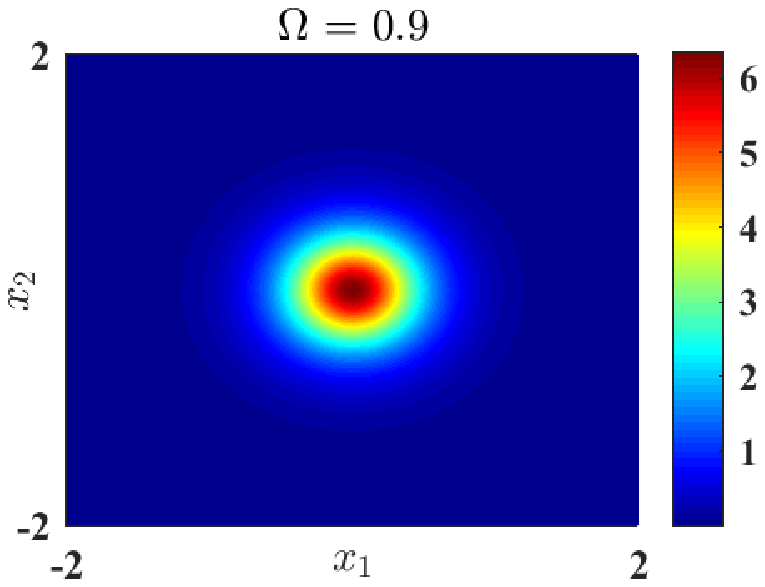,width=1.9in,height=1.7in,angle=0}
}
 \caption{Contour plots of $|\phi_g|^2$ for different
$\Omega$ in Example \ref{example-2}.}\label{fig_gs_focusing}
\end{figure}
\begin{table}[!htp]
\centering\small
\caption{Comparison of the GFALM, PBB and PCG  in Example \ref{example-2}.}
\label{tab:foC_a}
\begin{tabular}{ccccccc}
\hline
Method & $\Omega$ & $iter$ & CPUs & $S_{\Omega,\omega}(\phi_g)$ & $\mathcal{E}_{err}^g$ & $r_{err}^{g,\infty}$\\
\hline
\multirow{5}{*}{GFALM}
%& 0 & - & - & - & -& -   \\
& 0.3 & 1056 & 2.28 &7.04363107 & 9.77E-15&9.29E-07    \\
& 0.5 & 1342 & 2.85 &7.04363107 & 9.33E-15& 9.20E-07   \\
& 0.7 & 2006& 3.95 & 7.04363107 & 7.99E-15& 8.72E-07   \\
%& 0.8 & 2388& 4.89 & 7.04357030 & 9.77E-15& 9.42E-07   \\
& 0.9 & 5128 &9.97 & 7.04363107 &4.56E-15 & 8.86E-07  \\
\hline
\multirow{5}{*}{PBB}
%& 0 & - & - & - & -& -   \\
& 0.3 & 49 & 0.91 &7.04363107 & 4.44E-16& 6.92E-06   \\
& 0.5 & 70 & 0.98 &7.04363107 & 7.11E-15& 7.24E-07  \\
& 0.7 & 90 &1.05 &7.04363107 &3.11E-16& 3.04E-07  \\
%& 0.8 & 126 &1.15 &7.04357031 &5.32E-15& 3.48E-07  \\
& 0.9 & 156 & 1.31 &7.04363107 &6.66E-15& 5.18E-07  \\
\hline
\multirow{5}{*}{PCG}
%& 0 & - & - & - & -& -   \\
& 0.3 & 43 & 3.53 &7.04363107 & 4.44E-15& 3.33E-07   \\
& 0.5 & 49 & 3.95 &7.04363107 & 1.77E-15& 6.54E-07  \\
& 0.7 & 63 &4.50 &7.04363107&4.00E-15& 7.56E-07  \\
%& 0.8 & 74 &6.23 &7.04357030 &4.44E-15& 1.75E-06  \\
& 0.9 & 99 & 6.88 & 7.04363107 &8.88E-15& 1.53E-06  \\
\hline
\end{tabular}
\end{table}

Our first numerical observation is that any types of initial data in \eqref{initial-T1}-\eqref{initial-T3} converge very fast to the same state in the GFALM-BF, PBB and PCG methods. For simplicity, here we only show the results with $\phi_0=\phi_e/{\|\phi_e\|_{L^4}}$. Fig. \ref{fig_Qens} shows the decrease of $Q(u^k)$ (\ref{Q def}) with respect to the number of iterations $k$ in the three methods. Tab. \ref{tab:foC_a} presents the total number of iterations (iter), the computational time in seconds (CPUs\footnote{Programmed sequentially in MATLAB and run on a MacBook 2.4 GHz Intel Core i5.}), (\ref{r_err}) and (\ref{energy stop}) of the methods under different $\Omega$.

It can be seen that the GFALM-BF, PBB and PCG methods are all able to get the action ground states  accurately. The two optimization methods are more efficient than the gradient flow method in practice. Between the two, PCG takes the least number of iterations, whereas PBB takes the least CPUs to converge. Note that the PCG method uses Brent's method to get the adaptive step size, which may  take more CPUs than that of the BB step length strategy applied in the PBB method. Moreover, we observe from Tab. \ref{tab:foC_a} that the value of $S_{\Omega,\omega}(\phi_g)$ does not change with respect to the rotational speed $\Omega$. This is because the obtained ground state functions here are positive (which verifies the result in Lemma \ref{lem:focusing-min-existence}), and the positive ground state never contributes to the rotation part in \eqref{action}. The profiles of the action ground states as shown in Fig. \ref{fig_gs_focusing} are all Gaussian-like waves in the focusing case.

\subsection{Defocusing case}\label{sec:4-2}
Now we consider the defocusing case of the RNLS (\ref{model}). We shall use two examples to illustrate the performance of the methods introduced in Section \ref{sec:3}.

\begin{example}\label{example-3}
Similarly as before, we first consider an one-dimensional example to justify the theoretical results in Section \ref{sec:3}. Take $d=1$, $\beta=1$, $p=3$ in \eqref{model0}, and set $V(x)=0$ for $0<x<L$ with a given $L>0$ and $V(x)=\infty$ otherwise. Then, the benchmark problem reads
	\begin{align}\label{eq:defocusing-cubicNLS-1d}
		\frac12\partial_{xx}\phi(x)=|\phi(x)|^2\phi(x)+\omega\phi(x),\quad 0<x<L,
	\end{align}
	with homogeneous Dirichlet boundary conditions. The analytic ground state solution is expressed by the Jacobi elliptic function $\mathrm{sn}(\cdot,\cdot)$ (see, e.g., \cite{CCR2000PRA} for more details):
	\begin{align}\label{eq:sn}
		\phi_g(x)= \frac{2 k K(k)}{L} \, \mathrm{sn}\left(\frac{2K(k)x}{L},k\right),\quad x\in[0,L],
	\end{align}
	where $K(k)=\int_0^{\frac{\pi}{2}}\left(1-k^2\sin^2\theta\right)^{-1/2}d\theta$ is the complete elliptic integral of the first kind and the modulus $k\in[0,1]$ is determined by the equation
$2(1+k^2)K(k)^2+\omega L^2=0.$
	We fix $\omega=-10,\,L=1$, and numerically compute the action ground state by the GF-BF scheme \eqref{eq:defocusing-gfbf} (i.e., Algorithm~\ref{algorithm-gfbf}) that minimizes the action functional $S(\phi):=\int_0^L\left(\frac12|\partial_{x}\phi(x)|+\omega|\phi(x)|+\frac12|\phi(x)|^4\right)dx$.
\end{example}

\begin{figure}[!htp]
  \centering
\psfig{figure=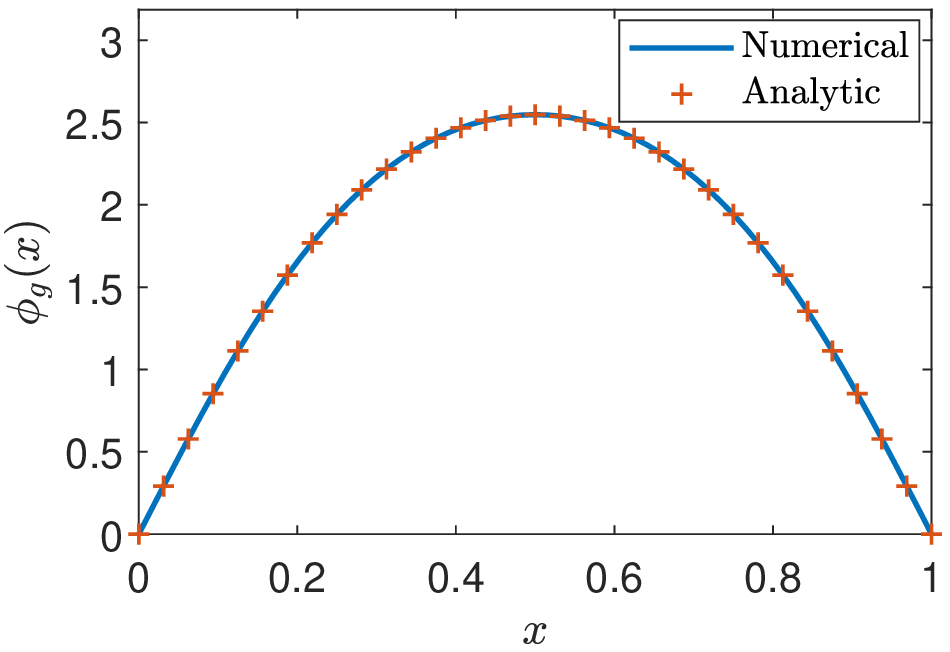,width=2.9in,height=2.in,angle=0}
\psfig{figure=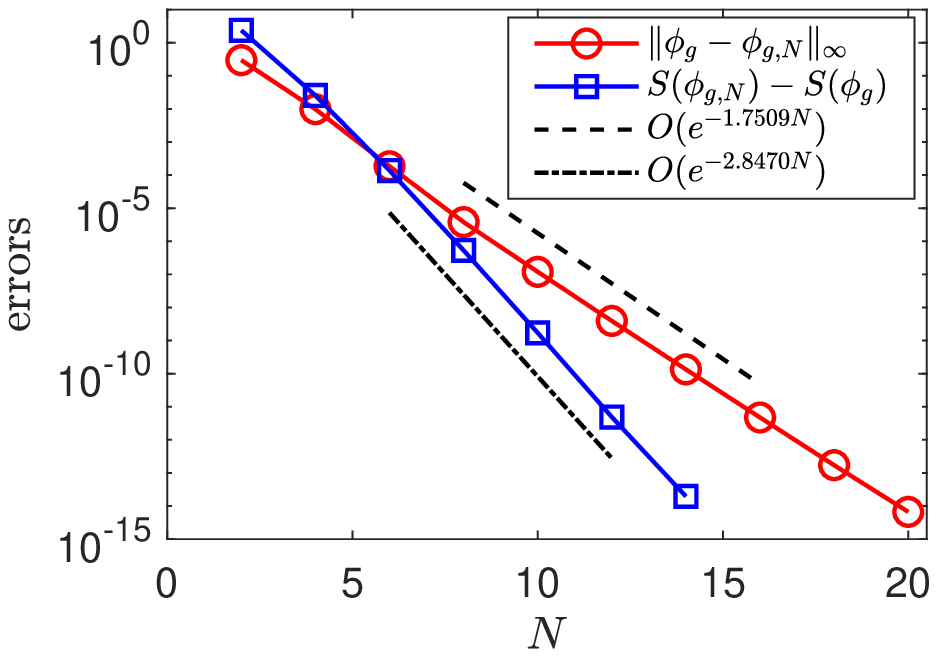,width=2.9in,height=2.in,angle=0}
  \caption{Accuracy of GF-BF in Example~\ref{example-3}: profiles of  the numerical solution $\phi_{g,N}$ with $N=2^{6}$ and the analytic solution $\phi_g$ (left); error of the numerical solution and the action functional with respect to $N$ (right).}
  \label{fig:sn-omega-10}
\end{figure}
\begin{figure}[!htp]
  \centering
\psfig{figure=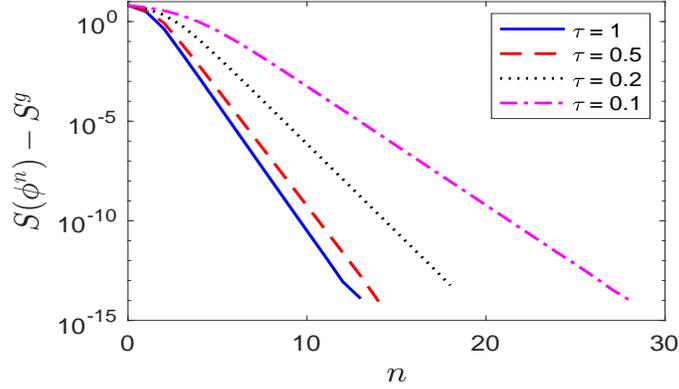,width=3.5in,height=2.in,angle=0}
  \\
  \caption{The change of $S(\phi^n)-S^g$ (in logarithmic scale) with respect to the number of iterations $n$ in GF-BF under different $\tau$ in Example~\ref{example-3}.}
  \label{fig:sn-omega-10-Sdecay}
\end{figure}

We implement Algorithm~\ref{algorithm-gfbf} in space by using the sine pseudospectral discretization \cite{BaoCai} with $N$ discrete sine modes. The time step  is taken as $\tau=1$ and the stabilization factor is chosen as $\alpha_n=\frac12\max_{1\leq j\leq N-1}\{0,\omega+|\phi^n(x_j)|^2\}+1$ (by Theorem \ref{lem:gfbf-action-decay}), where $x_j=jL/N$, $j=0,1,\ldots,N$. The initial data is chosen as $\phi_0(x)=\sin(\pi x)$. We stop the iteration in Algorithm~\ref{algorithm-gfbf} and adopt $\phi_{g,N}:=\phi^n$ as the numerical solution if the maximal residual of the equation \eqref{eq:defocusing-cubicNLS-1d} is less than $10^{-14}$, i.e.,
\begin{align*}
  \max_{1\leq j\leq N-1}\left|-\frac12 \partial_{xx}\phi^n(x_j) +\omega \phi^n(x_j)+|\phi^n(x_j)|^2\phi^n(x_j)\right|<10^{-14}.
\end{align*}

In Fig.~\ref{fig:sn-omega-10},  the left subplot presents the profiles of the analytic solution $\phi_g$ and the numerical solution $\phi_{g,N}$ with $N=2^{6}$. The right subplot shows the error of the numerical solution $\|\phi_g-\phi_{g,N}\|_{\infty}$ and the error of the action functional value $S(\phi_{g,N})-S(\phi_g)$ against $N$. Clearly, the results illustrate the effectiveness and the spatial spectral convergence of the GF-BF method. Moreover, in Fig.~\ref{fig:sn-omega-10-Sdecay} we plot the evolution of $S(\phi^n)-S^g$ in GF-BF with different time steps, where $S^g=S(\phi_g)\approx-8.78043500596719$ is obtained based on \eqref{eq:sn} and accurate quadratures. The result shows the decay of the original action functional in the GF-BF scheme \eqref{eq:defocusing-gfbf}, though we are only able to establish in  Theorem~\ref{lem:gfbf-action-decay} the decay of a modified action. The improvement of the theoretical analysis will be our future work.

\begin{example}\label{example-4} Now we consider a two-dimensional example by taking $d=2$, $p=3$, $\beta=100$, $\omega=-10$, and $V(\bx)=\frac{1}{2}(\gamma_1^2x_1^2+\gamma_2^2x_2^2)$ with  $\gamma_1=\gamma_2=1$ in (\ref{model}). The action ground state will be computed by the GF-BF, PBB and PCG methods in Section \ref{sec3 opt}.
\end{example}

\begin{table}[!htp]
\centering\small
\caption{The action values at ground states (underlined with blue colour) or local minima obtained by GF-BF, PBB, PCG with  \eqref{initial-T1}-\eqref{initial-T3} in Example \ref{example-4}.}
\label{tab:initial_eng}
\begin{tabular}{cccccccc}
\hline
Method & $\Omega$ & $(a)$ & $(b)$ & $(c)$ & $(d)$ & $(e)$ & $(f)$ \\
\hline
\multirow{5}{*}{GF-BF}
& 0.2 & {\color{blue}\underline{-10.0938}} & -9.8055 & -10.0938 & -10.0938& -10.0938 &-10.0938  \\
& 0.3 & -10.0938 &{\color{blue}\underline{-10.1185}} & -10.0938 & -10.1185& -10.0938 &-10.1185   \\
& 0.5 & -10.0938 & -10.7638 & {\color{blue}\underline{-11.2055}}& -10.7638 &-10.7638 &-10.7638  \\
& 0.7 & -10.0938 & -11.4354 & -15.1161 &{\color{blue} \underline{-15.1408}}& -15.1133 &-15.1360  \\
& 0.8 & -10.0938 & -11.7811 &  -20.5439 & -20.5944& -20.5944 &{\color{blue}\underline{-20.6026} } \\
& 0.9 & -37.5563 & -37.5563 & -37.5733 & -37.5733& {\color{blue}\underline{-37.5733}} &-37.5733  \\
\hline
\multirow{5}{*}{PBB}
& 0.2 &  {\color{blue}\underline{-10.0938} }& -10.0938 & -10.0938 & -10.0938& -10.0938 &-10.0938 \\
& 0.3 & -10.0938 & {\color{blue}\underline{-10.1185}} & -10.0938 &-10.1185&  -10.0938 &-10.1185   \\
& 0.5 & -10.0938 & -10.7638 & {\color{blue}\underline{-11.2055}}& -10.7638&-10.7638&-10.7638  \\
& 0.7 & -10.0938 &  {\color{blue}\underline{-15.1408} }& -15.1360 & -15.1219& -15.1219 &-15.1245   \\
& 0.8 & -20.5991 & -20.5946 & -20.5946 & -20.5951& -20.5946 &{\color{blue}\underline{-20.6026} } \\
& 0.9 & -37.5733 & -37.5733 & -37.5733 & -37.5563&  {\color{blue}\underline{-37.5733}} &-37.5733  \\
\hline
\multirow{5}{*}{PCG}
& 0.2 & {\color{blue}\underline{-10.0938} }& -9.8055 & -10.0938 & -10.0938 & -10.0938 &-10.0938  \\
& 0.3 & -10.0938 & {\color{blue}\underline{-10.1185} }& -10.0938 & -10.1185& -10.1185 &-10.1185   \\
& 0.5 &-10.0938& -10.7638 &{\color{blue} \underline{-11.2055}} & -10.7638 &-10.7638& -10.7638  \\
& 0.7 & -10.0938 & -11.4354 & -15.1219 & -15.1408 &{\color{blue} \underline{-15.1408} }&-15.1360   \\
& 0.8 & -10.0938 & -11.7811 &  -20.5949 & -20.5991 & -20.5944 &{\color{blue} \underline{-20.6026}}  \\
& 0.9 & -37.5563 & -37.5733 & -37.5733 & -37.5733& {\color{blue}\underline{-37.5733}}&-37.5563  \\
\hline
\end{tabular}
\end{table}
\begin{figure}[!htp]
\centerline{
\psfig{figure=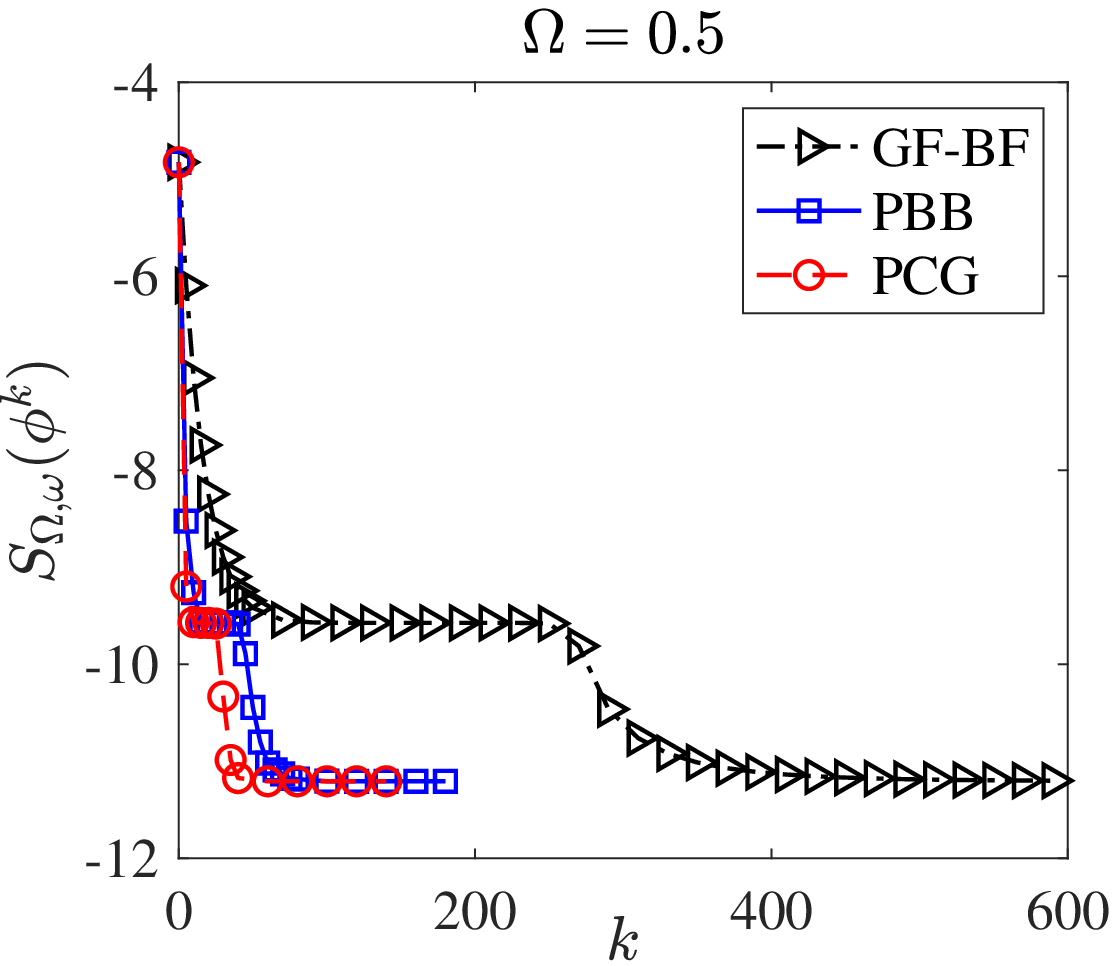,width=2.8in,height=2.in,angle=0}\quad
\psfig{figure=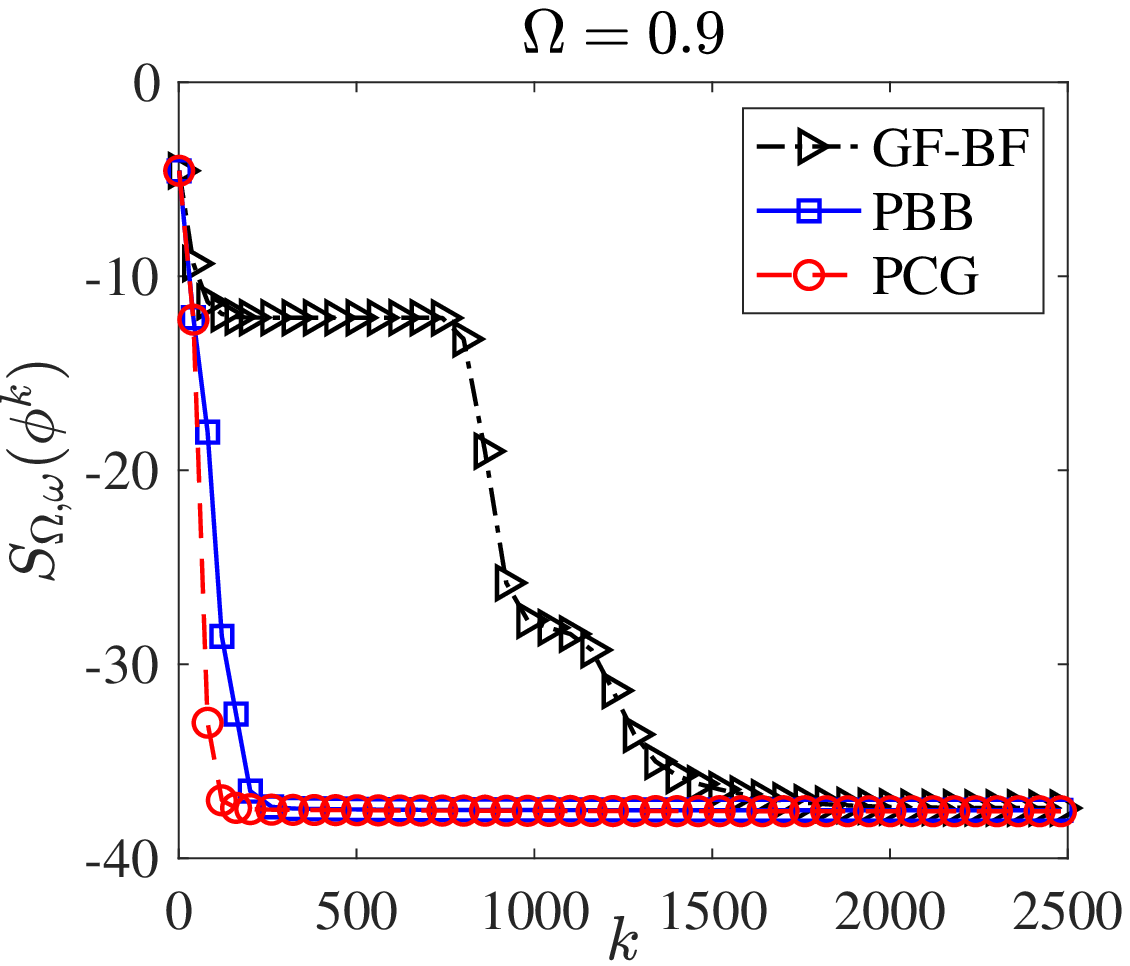,width=2.8in,height=2.in,angle=0}
}
\caption{The change of $S_{\Omega,\omega}(\phi^k)$ with respect to the number of iterations $k$ under  $\Omega=0.5$ (left) and $\Omega=0.9$ (right) in Example \ref{example-4}.}\label{fig_actions}
\end{figure}
\begin{table}[t!]
\centering\small
\caption{Comparison of GF-BF, PBB, PCG in Example \ref{example-4}.}
\label{tab:defoC_a}
\begin{tabular}{ccccccc}
\hline
Method & $\Omega$ & $iter$ & CPU(s) & $S_{\Omega,\omega}(\phi_g)$ & $\mathcal{E}_{err}^g$ & $r_{err}^{g,\infty}$\\
\hline
\multirow{5}{*}{GF-BF}
& 0.2 & 283 & 3.40 & -10.0938 & 8.40E-13& 1.03E-06   \\
& 0.3 & 287 & 3.22 & -10.1185 & 9.69E-13& 1.03E-06    \\
& 0.5 & 2204 & 24.08 & -11.2055& 9.30E-13&2.85E-06  \\
%& 0.6 & 94240 & 800.55 & -12.5869& 9.96E-13&1.77E-06   \\
& 0.7 & 197813& 1687.37& -15.1408 & 9.84E-13& 1.39E-06   \\
& 0.8 & 805823 & 6889.51 &  -20.6026 & 9.30E-13& 1.20E-06   \\
& 0.9 & 4120945 & 35367.80 & -37.5733 & 9.80E-13& 1.92E-06  \\
\hline
\multirow{5}{*}{PBB}
& 0.2 & 56 & 9.58 & -10.0938 & 2.29E-13& 8.03E-07   \\
& 0.3 & 53 & 6.30 & -10.1185 & 5.11E-13& 1.56E-06   \\
& 0.5 & 194 & 13.73& -11.2055 & 5.68E-13& 1.83E-06  \\
%& 0.6 & 2714 & 170.51 &-12.5869 & 9.89E-13& 1.50E-06   \\
& 0.7 & 6030 & 365.04 &-15.1408 & 8.70E-13& 1.59E-06   \\
& 0.8 & 11185 & 668.22 & -20.6026 &9.91E-13& 9.70E-07   \\
& 0.9 & 36376 & 2165.31 & -37.5733 & 5.04E-13& 7.23E-06  \\
\hline
\multirow{5}{*}{PCG}
& 0.2 & 35 & 7.07 & -10.0938 &8.68E-13& 2.06E-06   \\
& 0.3 & 35 & 9.94 & -10.1185 & 1.88E-13& 7.38E-07  \\
& 0.5 & 159 & 18.68 & -11.2055 & 7.21E-13& 1.07E-06  \\
%& 0.6 & 860 & 102.96 & -12.5869 & 9.50E-13& 3.96E-06  \\
& 0.7 & 1191& 141.83 & -15.1408 & 9.98E-13&1.32E-06   \\
& 0.8 & 2708 & 438.10 & -20.6026 & 8.56E-13& 2.59E-06  \\
& 0.9 & 6917 & 828.25 & -37.5733 & 8.52E-13& 8.78E-07  \\
\hline
\end{tabular}
\end{table}
\begin{figure}[!htp]
\centerline{
\psfig{figure=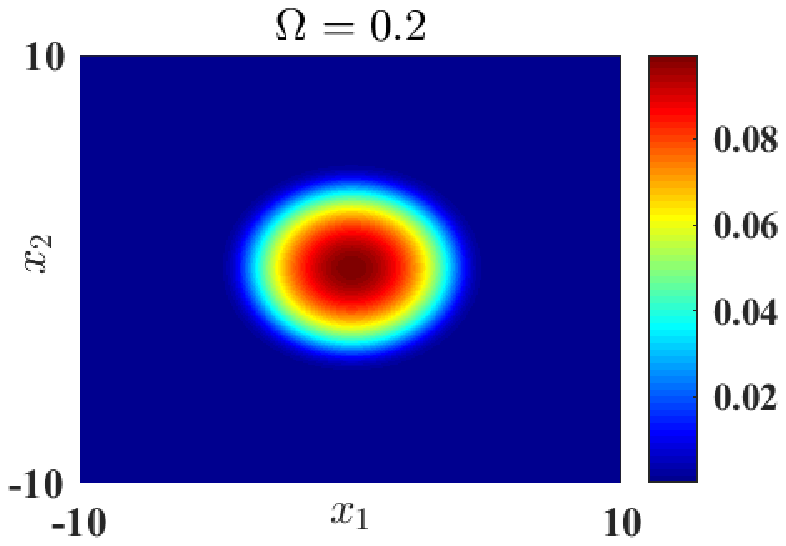,width=1.9in,height=1.7in,angle=0}
\hspace{-0.1in}
\psfig{figure=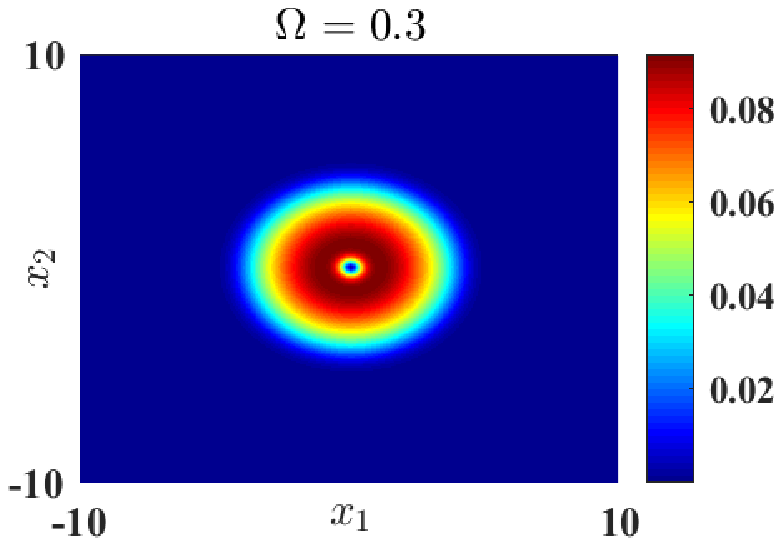,width=1.9in,height=1.7in,angle=0}
\hspace{-0.1in}
\psfig{figure=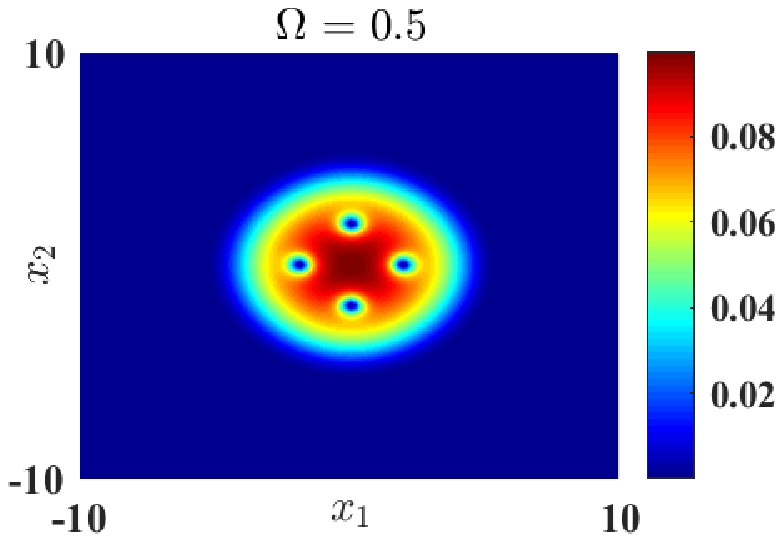,width=1.9in,height=1.7in,angle=0}
}
\centerline{
\psfig{figure=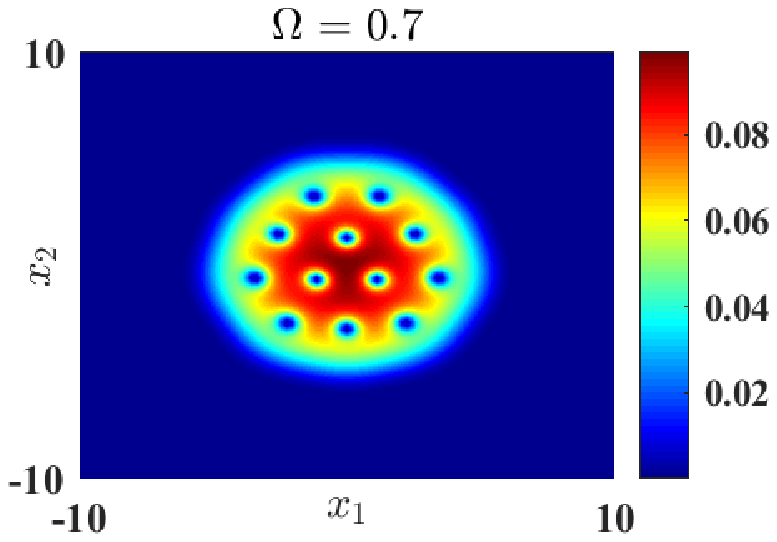,width=1.9in,height=1.7in,angle=0}
\hspace{-0.1in}
\psfig{figure=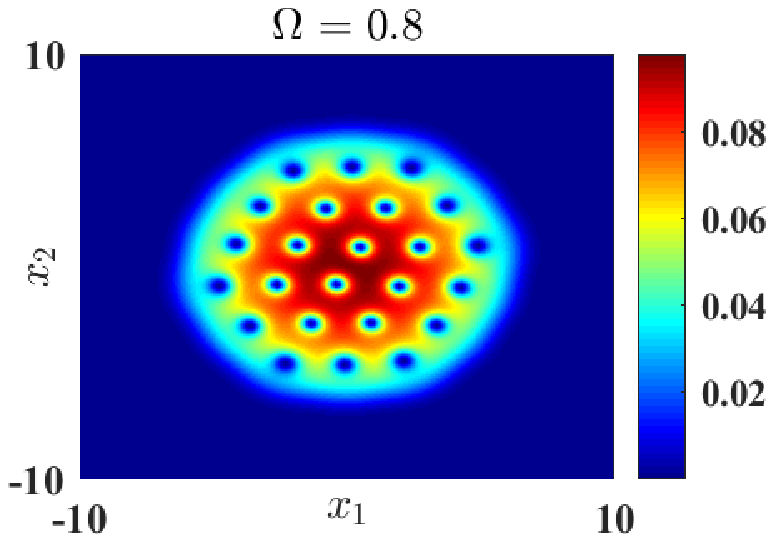,width=1.9in,height=1.7in,angle=0}
\hspace{-0.1in}
\psfig{figure=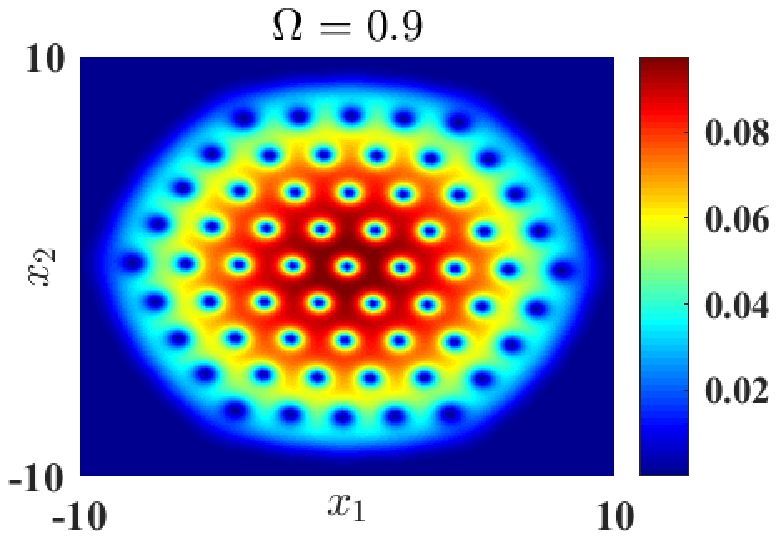,width=1.9in,height=1.7in,angle=0}
\vspace{-0.1in}
}
 \caption{Contour plots of $|\phi_g|^2$ for different $\Omega$ in Example \ref{example-4}.}\label{fig_gs_defocusing}
\end{figure}

The computation domain is fixed as $U=(-10,10)^2$ with mesh size $h=1/8$. We solve the problem \eqref{defocuse thm} by the GF-BF method in Algorithm~\ref{algorithm-gfbf} with $\tau=0.1$ and by the PBB in Algorithm \ref{algorithm3} and PCG in Algorithm \ref{algorithm4} with the preconditioner \eqref{P-V-ab-defocu}. The stopping criterion is set as $\mathcal{E}_{err}^n \leq 10^{-12}$ (\ref{residual defoc}) for all methods.
To discuss the effect of the initial data on the final state of the algorithms,
Tab. \ref{tab:initial_eng} lists the value of the action obtained by the GF-BF, PBB and PCG with each one of \eqref{initial-T1}-\eqref{initial-T3}. The smallest value obtained among the set of functions is underlined with blue color which indicates the approximate ground state. From the result, we can see that by using the initial data from \eqref{initial-T1}-\eqref{initial-T3}, GF-BF, PBB and PCG can all get to the ground state for a wide range of $\Omega$. Different methods may need different choices, and the `right' choice of initial data depends on the rotational speed $\Omega$. Inappropriate choices might lead to other steady states. We remark that the underlined initial data in blue converges most quickly in the algorithms, and so the results below are obtained with them.

To compare the efficiency of the methods, Fig. \ref{fig_actions} shows the decrease of $S_{\Omega,\omega}(\phi^k)$ with respect to the number of iterations $k$. Tab. \ref{tab:defoC_a} presents the total number of iterations, the CPUs and (\ref{residual defoc}) of the methods for different $\Omega$. It can be seen that GF-BF, PBB and PCG are all working for computing the ground states in the defocusing case, and PCG and PBB are remarkably more efficient than GF-BF  especially for large $\Omega$ ($0.5<\Omega<1$). Among them, PCG takes the least number of iterations and least CPUs to converge.

Fig. \ref{fig_gs_defocusing} shows the profiles of the obtained ground states for $\Omega=0.2, 0.3, 0.5, 0.7, 0.8,0.9$. We find that the action ground states in the defocusing case possess quantized vortices for suitably large $\Omega$, and the number of them increases when $\Omega$ increases. There  exists a critical value $\Omega_c\in(0.2,0.3)$ in the example for the rotational speed, which determines the appearance of the vortex in the solution. In addition, Tab. \ref{tab:defoC_a} shows that the action value at the ground state deceases significantly when $\Omega$ gets large.

\subsection{Relation with energy ground state}
At last but not least, we apply the proposed methods to investigate numerically the relation between  the action ground state and the energy ground state, i.e., the diagram \eqref{diagram}.

\begin{figure}[!htp]
\centerline{focusing case}
\psfig{figure=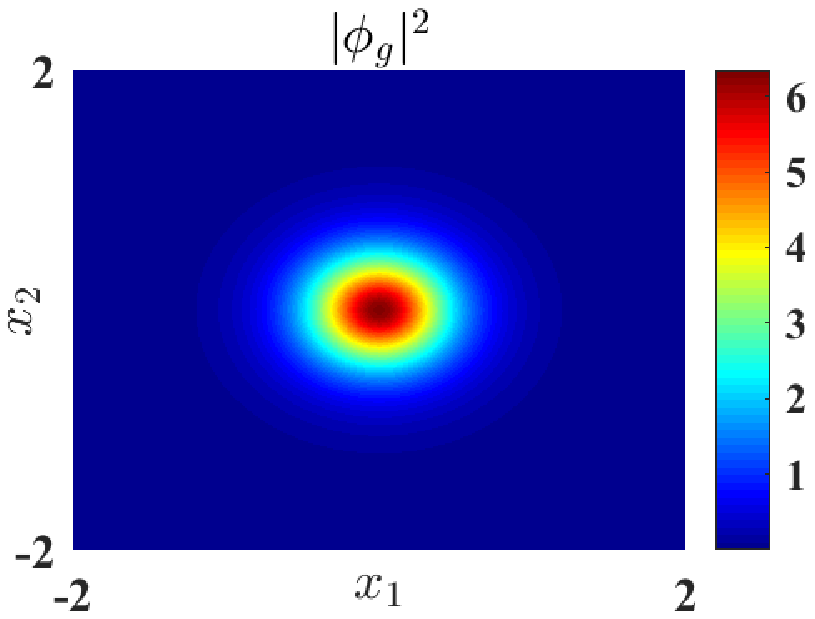,width=2.3in,height=1.7in,angle=0}
\psfig{figure=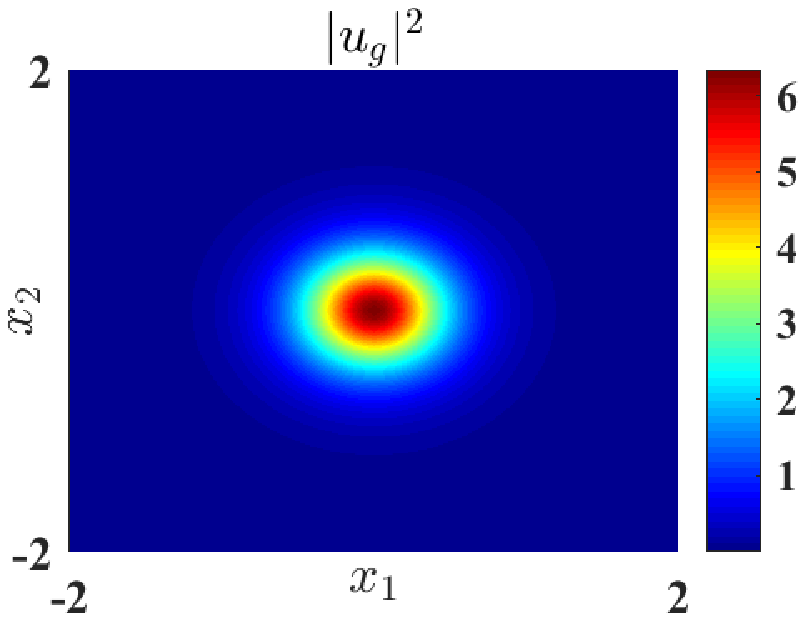,width=2.3in,height=1.7in,angle=0}\\
\centerline{defocusing case}
\psfig{figure=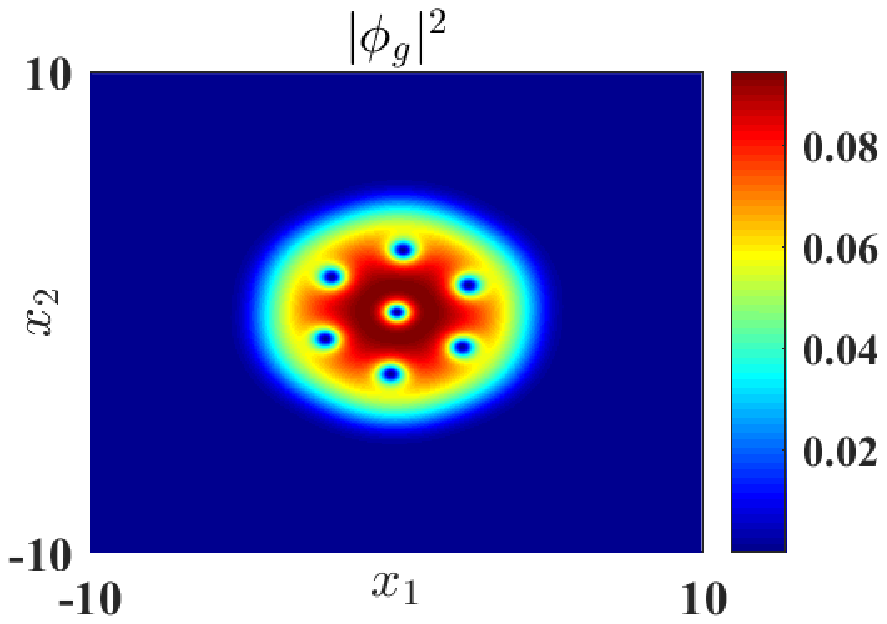,width=2.3in,height=1.7in,angle=0}
\psfig{figure=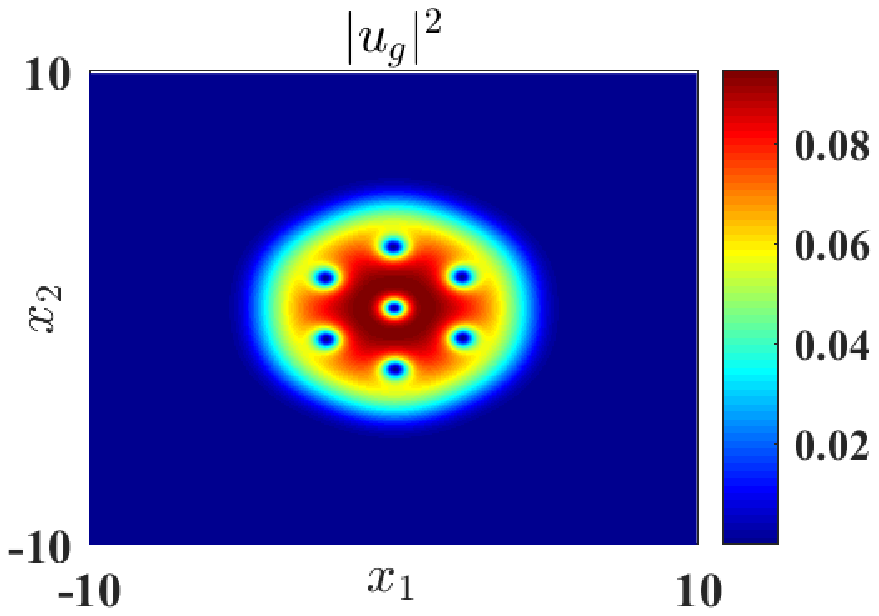,width=2.3in,height=1.7in,angle=0}
 \caption{Contour plots of $|\phi_g|^2$ from Prob. (I) (left column) and $|u_g|^2$ from Prob. (II) (right column) in focusing case and defocusing case.}\label{fig_relationship}
\end{figure}

We begin with the focusing case by using Example \ref{example-2} with $\Omega=0.6$. More precisely,  we first solve
$$
\mbox{Prob. (I)} \quad \phi_g(\bx)=\arg \min_{\phi\in\mathcal{M}} S_{\Omega,\omega}(\phi)
$$
by the PCG in Algorithm \ref{algorithm2} with $u_0= \phi_e/\|\phi_e\|_{L^{p+1}}$ defined in \eqref{initial-T3}. With the obtained action ground state $\phi_g$, we compute the mass $m=\|\phi_g\|^2_{L^2}$. Then, we solve the mass-prescribed  minimization of the energy (\ref{energy})
$$
\mbox{Prob. (II)} \quad u_g(\bx)=\arg \min_{\|u\|^2_{L^2}=m}  E_{\Omega}(u)
$$
by the PCG from \cite{ALT2017} and obtain the corresponding chemical potential $\omega(u_g)$  by \eqref{omega_g}. %We compare $\phi_g(\bf x)$ and $u_g(\bx)$ as well as the relationship between $\omega=1$ and $\omega(u_g)$ are studied.
Fig. \ref{fig_relationship} presents the solution $|\phi_g|^2$ and $|u_g|^2$ for Prob. (I) and Prob. (II), and our computation gives the  result:
$$
S_{\Omega,\omega}(\phi_g)=7.04363107 \approx E_{\Omega}(u_g)+\omega m=7.04363097, \quad \omega(u_g)=0.99999998\approx \omega=1.
$$
%Thus, we conclude numerically that, in the focusing case, the two kinds of minimization problems proposed in \eqref{diagram} are equivalent to each other.

Then we consider the defocusing case by Example \ref{example-4} with $\Omega=0.6$. We solve the Prob. (I) and Prob. (II)  similarly as above. The profiles of $|\phi_g|^2$ and $|u_g|^2$ are also given in Fig. \ref{fig_relationship}, and we find that
$$
S_{\Omega,\omega}(\phi_g)=-12.58691590 \approx E_{\Omega}(u_g)+\omega m=-12.58691588, \quad \omega(u_g)=-9.99999999\approx \omega=-10.
$$

It can be seen in both numerical experiments that, the ground state functions and the physical quantities of Prob. (I) and Prob. (II) are very close. This provides clues that the diagram (\ref{diagram}) may commutes under certain conditions. Further numerical and theoretical investigations are ongoing.

%\section{Conclusion}\label{sec:5}

\section*{Acknowledgements}
W. Liu is supported by NSFC 12101252, the International Postdoctoral Exchange Fellowship Program PC2021024 and the Guangdong Basic and Applied Basic Research Foundation 2022A1515010351. Y. Yuan is supported by NSFC 11971007 and 11601148. X. Zhao is supported by NSFC 12271413, 11901440 and the Natural Science Foundation of Hubei Province 2019CFA007.

\bibliographystyle{model1-num-names}

\end{document}